\definecolor{darkred}{rgb}{0.5,0,0}
\definecolor{darkgreen}{rgb}{0,0.5,0}
\definecolor{darkblue}{rgb}{0,0,0.5}
\newtheorem{theorem}{Theorem}[section]
\newtheorem{corollary}[theorem]{Corollary}
\newtheorem{proposition}[theorem]{Proposition}
\newtheorem{lemma}[theorem]{Lemma}
\theoremstyle{definition}
\newtheorem{definition}[theorem]{Definition}
\newtheorem{notation}[theorem]{Notation}
\newtheorem{lem}[theorem]{}
\newtheorem{remark}[theorem]{Remark}
\newtheorem{example}[theorem]{Example}
\newcommand{\blem}{\begin{lem} \rm}
\newcommand{\elem}{\end{lem}}
\newcommand\M{\mathcal{M}}
\newcommand\D{\mathcal{D}}
\renewcommand\M{\mathcal{M}}
\renewcommand\D{\mathcal{D}}
\newcommand{\T}{\mathcal{T}}
\newcommand{\J}{\mathcal{J}}
\newcommand{\R}{\mathbb{R}}
\newcommand{\C}{\mathbb{C}}
\newcommand{\cC}{\mathcal{C}}
\newcommand{\cT}{\mathcal{T}}
\newcommand{\sS}{\mathcal{S}}
\newcommand{\Z}{\mathbb{Z}}
\newcommand{\Q}{\mathbb{Q}}
\newcommand{\ddt}{\frac{d}{dt}}
\renewcommand{\P}{\mathbb{P}}
\newcommand{\PP}{\mathcal{P}}
\newcommand\lie[1]{\mathfrak{#1}}
\newcommand{\g}{\lie{g}}
\renewcommand{\t}{\lie{t}}
\newcommand{\on}{\operatorname}
\newcommand{\true}{\on{min}}
\newcommand{\fake}{\on{fake}}
\newcommand{\sing}{\on{sing}}
\newcommand{\dual}{\vee}
\newcommand{\Hom}{ \on{Hom}}
\newcommand{\Vol}{  \on{Vol}}
\newcommand{\Spec}{  \on{Spec}}
\newcommand{\codim}{\on{codim}}
\newcommand\dirac{/\kern-1.2ex\partial} 
\newcommand\qu{/\kern-.7ex/} 
\newcommand\lqu{\backslash \kern-.7ex \backslash} 
\newcommand\dr{r_+ \kern-.7ex - \kern-.7ex r_-}
\newcommand{\labell}\label
\renewcommand{\d}{{\on{d}}}
\newcommand{\ovl}{\overline}
\newcommand\Phinv{\Phi^{-1}}
\newcommand\Lam{\Lambda}
\newcommand\eps{\epsilon}
\newcommand{\f}{\frac}
\newcommand{\lan}{\langle}
\newcommand{\ran}{\rangle}
\newcommand{\hh}{{\f{1}{2}}}
\newcommand{\ti}{\tilde}
\newcommand\pt{\on{pt}}
\newcommand\Sym{\on{Sym}}
\newcommand\cI{\mathcal{I}}
\renewcommand{\ss}{{\on{ss}}}
\newcommand\Map{\on{Map}}
\newcommand\ev{\on{ev}}
\newcommand\Eul{\on{Eul}}
\newcommand\mO{\mathcal{O}}
\newcommand\bdefn{\begin{definition}}
\newcommand\edefn{\end{definition}}
\newcommand\bea{\begin{eqnarray*}}
\newcommand\eea{\end{eqnarray*}}
\newcommand\bcv{\left[ \begin{array}{r} }
\newcommand\ecv{\end{array} \right] }
\newcommand\bma{\left[ \begin{array}{l} }
\newcommand\ema{\end{array} \right]}
\newcommand\ben{\begin{enumerate}}
\newcommand\een{\end{enumerate}}
\newcommand\beq{\begin{equation}}
\newcommand\eeq{\end{equation}}
\newcommand\bex{\begin{example}}
\newcommand\bsj{\left\{ \begin{array}{rrr} }
\newcommand\esj{\end{array} \right\}}
\newcommand\eex{\end{example}}
\newcommand\sx{*\kern-.5ex_X}
\newcommand{\Jac}{\on{Jac}}
\newcommand{\Crit}{\on{Crit}}
\newcommand{\hJac}{{{\Jac}_+}}
\newcommand{\bA}{\mathbb{A}}
\def\mathunderaccent#1{\let\theaccent#1\mathpalette\putaccentunder}
\def\putaccentunder#1#2{\oalign{$#1#2$\crcr\hidewidth \vbox
to.2ex{\hbox{$#1\theaccent{}$}\vss}\hidewidth}}
\begin{document}

\parskip 0in 

\title[Quantum cohomology and toric minimal model programs]{Quantum cohomology and \\ toric minimal model programs}

\author{Eduardo Gonz\'alez} 


\address{
Department of Mathematics
University of Massachusetts Boston
100 William T. Morrissey Boulevard
Boston, MA 02125}
  \email{eduardo@math.umb.edu}

\author{Chris T. Woodward}

\address{Mathematics-Hill Center,
Rutgers University, 110 Frelinghuysen Road, Piscataway, NJ 08854-8019,
U.S.A.}  \email{woodwardc@gmail.com}

\thanks{Partially supported by grants DMS1104670 and DMS0904358.  We
  thank the Centre de Recerca Matem\`atica, Barcelona, for
  hospitality during preparation of this paper.}

\thanks{Appeared in {\em Adv. Math}. Volume 353, 7 September 2019,
  Pages 591--646.  doi.org/10.1016/j.aim.2019.07.004}

\begin{abstract}
  We give a quantum version of the Danilov-Jurkiewicz presentation of
  the cohomology of a compact toric orbifold with projective coarse
  moduli space.  More precisely, we construct a canonical isomorphism
  from a formal version of the Batyrev ring from \cite{bat:qcr} to the
  quantum orbifold cohomology at a canonical bulk deformation.  This
  isomorphism generalizes results of Givental \cite{gi:eq}, Iritani
  \cite{iritani:conv} and Fukaya-Oh-Ohta-Ono \cite{fooo:tms} for toric
  manifolds and Coates-Lee-Corti-Tseng \cite{coates:wps} for weighted
  projective spaces.  The proof uses a quantum version of Kirwan
  surjectivity (Theorem \ref{weaktoric2} below) and an equality of
  dimensions (Theorem \ref{equality} below) deduced using a toric
  minimal model program (tmmp).  We show that there is a natural
  decomposition of the quantum cohomology where summands correspond to
  singularities in the tmmp, each of which gives rise to a collection
  of Hamiltonian non-displaceable Lagrangian tori.
\end{abstract} 

 \maketitle

\tableofcontents
\parskip .1in

\section{Introduction}

According to results of Danilov and Jurkiewicz
\cite{danilov:toric,jurk:toric1,jurk:toric2}, the rational cohomology
ring of a complete rationally-smooth toric variety is the quotient of
a polynomial ring generated by prime invariant divisors by the
Stanley-Reisner ideal.  In addition to relations corresponding to
linear equivalence of invariant divisors, there are higher degree
relations corresponding to collections of divisors whose intersection
is empty.

One can reformulate this presentation of the cohomology ring in terms
of equivariant cohomology as follows.  Let $G$ be a complex reductive
group acting on a smooth polarized projective variety $X$.  If the
action on the semistable locus $X^{\ss}$ is locally free then the
geometric invariant theory (git) quotient $X \qu G = X^{\ss}/G $, by
which we mean the stack-theoretic quotient of the semistable locus by
the group action, is a smooth proper Deligne-Mumford stack with
projective coarse moduli space.  A result of Kirwan \cite{ki:coh} says
that the natural map $H_G(X,\Q) \to H(X \qu G,\Q)$ is surjective.
Under suitable properness assumptions the same holds for
quasi-projective $X$.  

In particular, let $G$ be a torus acting on a finite-dimensional
vector space $X$ with weights contained in an open half-space.  The
quotient $X \qu G$ is a smooth proper Deligne-Mumford toric stack as
in Borisov-Chen-Smith \cite{bcs:tdms} and any such toric stack with
projective coarse moduli space arises in this way.  The equivariant
cohomology $H_G(X)$ may be identified with the ring of polynomial
functions on $\g$ and each weight maps to a divisor class in
$H(X \qu G)$ under the Kirwan map.  The Stanley-Reisner ideal $SR_X^G$
is precisely the kernel of the Kirwan map.  For example, if
$G = \C^\times$ acts by scalar multiplication on $X = \C^k$, then
$H_G(X) = \Q[\xi]$ is a polynomial ring in a single generator $\xi$,
the git quotient is $X \qu G = \P^{k-1}$, and the intersection of the
$k$ prime invariant divisors is empty.  The Stanley-Reisner ideal is
the ideal $\lan \xi^k \ran$ generated by $\xi^k$.  This gives the
standard description of the cohomology ring of projective space
$H(\P^{k-1}) = H_G(X)/ SR_X^G = \Q[\xi]/ \lan \xi^k\ran .$

In this paper we give a similar presentation of the quantum cohomology
of compact toric orbifolds with projective coarse moduli spaces, via
the quantum version of the Kirwan map introduced in
\cite{qkirwan1,qkirwan2,qkirwan3}.  The results here generalize those
of Batyrev \cite{bat:qcr}, Givental \cite{gi:eq}, Iritani
\cite{iri:gmt,iri:integral,iritani:conv}, and Fukaya-Oh-Ohta-Ono
\cite{fooo:tms}, who use results of McDuff-Tolman \cite{mct:top}.  In
particular, Iritani \cite{iritani:conv} computed the quantum
cohomology of toric manifolds using localization arguments for toric
varieties that appear as certain complete intersections, while Fukaya
et al \cite{fooo:tms} gave a computation using open-closed
Gromov-Witten invariants defined via Kuranishi structures.  The
orbifold quantum cohomology of weighted projective spaces is computed
in Coates-Lee-Corti-Tseng \cite{coates:wps}.  After the first version
of this manuscript appeared a mirror theorem for toric stacks was
proved by Coates, Corti, Iritani, and Tseng \cite{coates:mirror} and
applied to give a Batyrev-style presentation in \cite[Theorem
5.13]{coates:hodge}.

A novel feature of the approach here is the appearance of minimal
model programs, which are used to prove injectivity of the quantum
Kirwan map modulo the quantum Stanley-Reisner ideal.  The critical
values of the Givental-Hori-Vafa potential acquire a natural geometric
meaning in our approach: their logarithms are the transition times in
the minimal model program, see Theorem \ref{jump} below, and the
dimension of the orbifold cohomology and the logarithm of the lowest
eigenvalue of quantum multiplication by the first Chern class decrease
under each transition.  We also obtain a more conceptual understanding
of the appearance of open families of non-displaceable Lagrangians in
toric orbifolds, as a consequence of the existence of infinitely many
minimal model programs, see Remark \ref{induced}.

We introduce the following notations.

\begin{notation} \label{notation}
\begin{enumerate} 
\item {\rm (Novikov coefficients)} Let $\Lambda$ denote the {\em
  universal Novikov field} of formal power series of $q$ with rational
  exponents
\[ \Lambda = \Set{ \sum_\rho c_\rho q^\rho \ |  \begin{array}{l}
                                                  c_\rho \in \C, \rho
                                                  \in \Q \\  \forall e > 0,
\# \{ \rho | c_\rho < e \} < \infty  \end{array} }  .\]
We denote by $\Lambda_0 \subset \Lambda$ the subring with only
non-negative powers of $q$.
\item {\rm (Equivariant quantum cohomology)} Let
\[QH_G(X) := H_G(X,\C)
  \otimes_\C \Lambda\]
  denote the (ungraded) equivariant quantum cohomology of $X$.  We
  denote by
$QH_G(X,\Q) := H_G(X,\Q) \otimes_\Q \Lambda$ 
the subspace with rational coefficients.  Equivariant enumeration of
stable maps to $X$ defines a family of products
\[ \star_\alpha : T_\alpha QH_G(X,\Q)^2 \to T_\alpha QH_G(X,\Q) \]
forming (part of) the structure of a {\em Frobenius manifold} on
$QH_G(X,\Q)$ \cite{gi:eq} for $\alpha$ in a formal neighborhood of a
symplectic class $\omega \in H_2^G(X,\Q)$.  Explicitly the product
$ \beta \star_{\alpha + \omega} \gamma$ is defined by 
\begin{equation} \label{star}
 \lan \beta \star_{\alpha + \omega} \gamma, \delta \ran = \sum_{d \in
   H_2(X,\Z), n \ge 0} \frac{q^{\lan d, \omega \ran}}{n!}
 \int_{[\ovl{\M}_{0,n+3}(X,d)_G]} \ev^* (\alpha,\ldots, \alpha, \beta, \gamma,
 \delta^\dual) \end{equation}
where the integral denotes push-forward to $BG$ using the equivariant
virtual fundamental class described in \cite{gr:loc}.
\item {\rm (Inertia stacks)} The {\em inertia stack} of $X \qu G$ is
  \begin{eqnarray*} I_{X \qu G} &=& \bigcup_{ r > 0}
    \Hom^{\on{rep}}(\P(r), X \qu G ) = \bigcup_{[g]} X^{g,\ss} / Z_g
    .\end{eqnarray*}
In the first union, $\Hom^{\on{rep}}(\P(r), \cdot)$ denotes
representable morphisms from $\P(r) = B\Z_r$ and the second union is
over conjugacy classes $[g]$ of elements $g \in G$, with $Z_g \subset
G$ the centralizer of $g$ and $X^{g,\ss}$ the intersection of the
semistable locus $X^{\ss}$ with the fixed point set 
\[X^g := \{ x \in X
\ | \ gx = x \} .\]  
The {\em rigidified inertia stack} is
\begin{eqnarray*} 
 \ovl{I}_{X \qu G} &=& \bigcup_{ r > 0} \Hom^{\on{rep}}(\P(r),
 X/G)/\P(r) = \bigcup_{[g]} X^{g,\ss} / (Z_g/ \lan g
 \ran) \end{eqnarray*}
where $\lan g \ran$ denotes the subgroup generated by $g$, as in
Abramovich-Graber-Vistoli \cite{agv:gw}, Chen-Ruan \cite{cr:orb}.
\item {\rm (Orbifold quantum cohomology of a git quotient)} Let
\[ QH(X \qu G) := H(I_{X \qu G},\C) \otimes \Lambda \] 
denote the orbifold quantum cohomology of $X \qu G$, or $QH(X \qu
G,\Q)$ the version with rational coefficients.  Enumeration of twisted
stable maps to $X \qu G$ (representable maps from orbifold curves to
$X \qu G$) defines a Frobenius manifold structure on $QH(X \qu G)$
\cite{agv:gw}, \cite{cr:orb} given by a family of products 
\[ \star_\alpha : T_\alpha QH(X \qu G,\Q)^2 \to T_\alpha QH(X \qu
G,\Q) .\] 
These products are defined in a formal neighborhood of an equivariant
symplectic class $\omega \in H^2(X \qu G, \Q)$ by
\begin{equation} \label{star2}
\lan \beta \star_{\omega + \alpha} \gamma, \delta \ran := \sum_{ 
\substack{ d
   \in H_2(X \qu G,\Q) \\ n \ge 0}} \frac{q^{\lan d , \omega \ran}}{n!}
 \int_{[\ovl{\M}_{0,n+3}(X \qu G,d)]} \ev^*(
 \alpha,\ldots,\alpha,\beta,\gamma,\delta^\dual) \end{equation}
for $\alpha, \beta, \gamma \in H(I_{X \qu G})$, extended by linearity
over $\Lambda$.  The pairing on the left-hand-side is a certain
re-scaled Poincar\'e pairing on the inertia stack $I_{X \qu G}$, see
\cite{agv:gw}.
\end{enumerate}
\end{notation} 

\begin{example} \label{qhex}
 To connect with the notation in 
\cite{agv:gw}, \cite{cr:orb} (where one works with different Novikov
fields) consider the following examples. 
\begin{enumerate} 
\item {\rm (Stacky half-point)} Let $G = \C^\times$ act on $X = \C$ with
  weight two so that $X \qu G = \P(2)$. The inertia stack
  $I_{X \qu G}$ is the union of two copies of $\P(2)$ corresponding to
  the elements $\pm 1$ of $\Z_2$.  Thus
\[QH(X \qu G) = \Lambda \oplus \Lambda \theta_-\]
  the sum of two copies of $\Lambda$, where $\theta_-$ is the
  additive generator of the twisted sector.  Representable morphisms
  from a stacky curve $C$ to $X \qu G = \P(2)$ correspond to double
  covers of the coarse moduli space $C$, with ramification at the
  stacky points.  Since there is a unique double cover of the
  projective line with two ramification points (up to isomorphism)
  multiplication is given by
$\theta_- \star_\omega \theta_- = 1 .$
\item {\rm (Teardrop orbifold)} \label{teardrop} Suppose that $G =
  \C^\times$ acts on $X = \C^2$ with weights $1,2$.  Then $X \qu G =
  \P(1,2)$ is a weighted projective line, $QH_G(X) \cong
  \Lambda[\xi]$ is a polynomial ring in a single generator, while
\[QH(X \qu G) =
  \Lambda \oplus \Lambda \theta_+ \oplus \Lambda \theta_-\] 
  where $\theta_+$ is the point class in
  $H(X \qu G) \subset H(I_{X \qu G})$ and $\theta_-$ is the class of
  the fixed point set $X^{-1}/ \lan -1 \ran = \P(2)$ in the twisted
  sector.  Identify $H^G_2(X,\Q) \cong \Q$ corresponding to the
  dual of the Euler class of the representation with weight one. The
  fundamental class in $H_2(X \qu G,\Q) \cong H_2^G(X,\Q)$ then maps
  to $1/2$.  The moduli space of twisted stable maps
  $u: C \to \P(1,2)$ of genus and class zero is either isomorphic to
  $\P(1,2)$ for no stacky points in the domain $C$, or isomorphic to
  $\P(2)$, for two stacky points in the domain $C$.  Furthermore there
  is a unique (up to isomorphism) homology class $1/2$ twisted map
  with two smooth marked points and one stacky marked point with
  $\Z_2$ automorphism group.  It follows that if the
  symplectic class $\omega$ has area $1/2$ on the fundamental
  class of $\P(1,2)$ then the quantum product is defined by
\[\theta_+ \star_\omega \theta_+ = q^{1/2} \theta_-/2, \quad \theta_-
\star_\omega \theta_+ = q^{1/2}/2, \quad \theta_- \star_\omega \theta_- =
\theta_+ .\]
Thus after inverting $q^{1/2}$, the orbifold quantum cohomology is
generated by $\theta_+$ with the relation $\theta_+^3 = q/4$.
\end{enumerate} 
\end{example} 

\begin{remark} \label{alt} {\rm (Alternative power series rings)} 
Some confusion may be caused by the multitude of formal power series
rings that one can work over; unfortunately almost every set of
authors has a different convention.
\begin{enumerate}
\item The equivariant quantum cohomology $QH_G(X)$ can be defined over
  the larger {\em equivariant Novikov field}
  $\Lambda_X^G \subset \Map(H_2^G(X,\Z),\Q)$ consisting of infinite
  sums $\sum_{i=1}^\infty c_i q^{d_i}$ with
  $\lan d_i, \omega \ran \to \infty$, where $q^{d_i}$ is the delta
  function at $d_i \in H_2^G(X,\Z)$.  Similarly, the quantum
  cohomology of the quotient $QH(X \qu G)$ can be defined over the
  Novikov field $\Lambda_{X \qu G} \subset \Map(H(X \qu G,\Q),\Q)$
  consisting of infinite sums $\sum_{i=1}^\infty c_i q^{d_i}$ with
  $\lan d_i, \omega \ran \to \infty$, where $q^{d_i}$ is the delta
  function at $d_i \in H_2(X \qu G,\Q)$.  The advantage of these rings
  is that the equivariant quantum cohomology $QH_G(X)$ becomes
  $\Z$-graded.
\item $QH_G(X)$ is also defined over the {\em universal Novikov ring}
  $\Lambda_0$.  If $\omega$ is integral, then $QH_G(X)$ is defined
  over $\Q[[q]]$.  Similarly, $QH(X \qu G)$ is defined over the
  Novikov ring $\Lambda_0$, and if $\omega$ is integral, over
  $\Q[[q^{1/n}]]$ for $n$ equal to the least common multiple of the
  orders of the automorphism groups in $X \qu G$.  However, it is
  convenient to work over the field $\Lambda$.  Invariance under
  Hamiltonian perturbation only holds for Floer/quantum cohomology
  over the Novikov field $\Lambda$, and so working over $\Lambda$ is
  more natural for the purposes of symplectic geometry.
\item Unfortunately, $\Lambda$ and $\Lambda_0$ are not finitely
  generated \label{changenonnoetherian} over $\C$ and so some care is
  required when talking about intersection multiplicities.  In
  practice, when we wish to talk about intersection multiplicities we
  assume that the symplectic form is integral in which case our
  algebras are defined over $\C[q,q^{-1}]$.
\item In algebraic geometry, one often uses the monoid-algebra of
  effective curve classes, but we prefer Novikov fields because of the
  better invariance properties. In fact, the cone of effective curve
  classes is not any more explicit than working over the Novikov field
  since it is the classes of {\em connected curves} that appear in the
  Gromov-Witten potentials, and these are rather hard to determine.
\end{enumerate} 
\end{remark} 

In \cite{qkirwan1,qkirwan2,qkirwan3} the second author studied the relationship between
$QH_G(X)$ and $QH(X \qu G)$ given by virtual enumeration of affine
gauged maps, called the {\em quantum Kirwan map}.  An $n$-marked {\em
  affine gauged map} is a representable morphism from a weighted
projective line $\P(1,r)$ for some $r > 0$ to the quotient stack $X/G$
mapping $\P(r) \subset \P(1,r)$ to the semistable locus $X \qu G$.
Some of the results of \cite{qkirwan1,qkirwan2,qkirwan3} are:
\begin{theorem} {\rm (Definition and properties of the quantum Kirwan map)}  
\begin{enumerate} 
\item The stack $\M_{n,1}^G(\bA,X,d)$ of $n$-marked affine gauged maps
  of class $d \in H_2^G(X,\Q)$ has a natural compactification
  $\ovl{\M}_{n,1}^G(\bA,X,d)$.  Denote by $\ev,\ev_\infty$ the
  evaluation maps
\[ \begin{diagram} 
\node{} \node{\ovl{\M}_{n,1}^G(\bA,X)} \arrow{sw,t}{\ev} \arrow{se,t}{\ev_\infty} 
\node{} \\ 
\node{ (X/G)^n } \node{} \node{ \ovl{I}_{X \qu G}} \end{diagram}
\]
and $\ev_d$, $\ev_{d,\infty}$ their restrictions to maps of class $d$.
The moduli stack $\ovl{\M}_{n,1}^G(\bA,X,d)$ has a perfect relative
obstruction theory over $\ovl{\M}_{n,1}(\bA)$ (the case of $X$ and $G$
trivial) where $\ovl{\M}_{n,1}(\bA)$ is the complexification of
Stasheff's multiplihedron.
\item 
For any $n \ge 0$, the map defined by virtual enumeration of stable
$n$-marked affine gauged maps
\begin{multline}
 \kappa_X^{G,n}: QH_G(X,\Q) \to QH(X \qu G,\Q) \\ \alpha \mapsto
 \sum_{d \in H_2^G(X,\Q)} q^{\lan d, \omega \ran} \ev_{d,\infty,*} \ev_d^*
 (\alpha, \ldots, \alpha) \end{multline}
is well-defined.
\item The sum 
\[ \kappa_X^G:\  QH_G(X,\Q) \to QH(X \qu G,\Q),  \quad \alpha \mapsto \sum_{n \ge 0}
 \frac{ \kappa_X^{G,n}(\alpha)}{n!} \]
 defines a formal map from $QH_G(X,\Q)$ to $QH(X \qu G,\Q)$ in a
 neighborhood of the symplectic class $\omega \in H_G^2(X,\Q)$ with
 the property that each linearization
\[ D_\alpha \kappa_X^G: T_\alpha QH_G(X,\Q) \to T_{\kappa_X^G(\alpha)}
QH(X \qu G,\Q) \]
is a $\star$-homomorphism with respect to the quantum products.
\end{enumerate}
\end{theorem}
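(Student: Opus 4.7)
The plan is to address the three parts in turn, building on the theory of twisted stable maps to the quotient stack $X/G$ from Abramovich-Graber-Vistoli \cite{agv:gw} together with perfect obstruction theories for maps to Artin stacks in the style of Behrend-Fantechi.

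For part (a), I would define an $n$-marked affine gauged map as a representable morphism $u : C \to X/G$ from a weighted line $C = \P(1,r)$ carrying $n$ marked smooth points and sending the stacky point $\infty \in \P(r)$ into the semistable locus, equipped with a ``scaling'' given by a nonzero class in $H^0(C,\omega_C^{-1}(2[\infty]))/\C^\times$. The compactification $\ol{\M}_{n,1}^G(\bA,X,d)$ allows three basic degenerations: sphere bubbles in $X$ when the scale tends to zero, twisted sphere bubbles in $X \qu G$ bubbling off from $\infty$, and nodal degenerations of the domain. Forgetting the target yields a morphism to the moduli stack $\ol{\M}_{n,1}(\bA)$ of stable scaled marked trees, a smooth proper Deligne-Mumford stack whose coarse moduli is the complexified Stasheff multiplihedron. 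The relative perfect obstruction theory over $\ol{\M}_{n,1}(\bA)$ is given by the dual of $R\pi_* u^* T_{X/G}$, with $\pi$ the universal curve and $T_{X/G}$ the two-term tangent complex of the quotient stack.

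For part (b), fixing the class $d$ yields a Deligne-Mumford stack proper over $\ol{\M}_{n,1}(\bA)$ by an orbifold Gromov compactness argument: only finitely many combinatorial types of stable scaled trees contribute below a given energy bound, and for each type the corresponding substack is proper by algebraic Gromov compactness in quotient stacks. The virtual pushforward $\ev_{d,\infty,*}\ev_d^*$ then produces finite-dimensional contributions in $H(I_{X \qu G},\C)$, and the sum over $d$ converges in $\Lambda_\Q$ because only finitely many effective classes $d$ have $\langle d, \omega \rangle$ below any prescribed bound.

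For part (c), the $\star$-homomorphism identity is extracted from a boundary analysis of a one-parameter family of moduli spaces interpolating between the two compositions. I would arrange its codimension-one boundary to split into two collections: strata where the scale degenerates, producing a sphere bubble in $X$ attached to an affine gauged map---these assemble under pushforward into $\kappa_X^G$ applied to a quantum product in $QH_G(X)$---and strata where a twisted stable map to $X \qu G$ bubbles off at $\infty$ and glues two affine gauged maps along a node---these assemble into the quantum product in $QH(X \qu G)$ of two applications of $\kappa_X^G$. The splitting axiom for the virtual class, together with the rescaled Poincar\'e pairing on $I_{X \qu G}$ appearing in \eqref{star2}, then equates the two contributions. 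The hard part is the compatibility of orbifold data at stacky nodes: one must ensure representability is preserved under gluing so that evaluation at $\infty$ lands in $\ol{I}_{X \qu G}$, and that ages and twistings match so the rescaled pairing enters with the correct normalization. This requires a careful setup of the obstruction theory near the stacky node, and is the technical heart of the argument.
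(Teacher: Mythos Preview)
The paper does not prove this theorem. It is stated immediately after the sentence ``Some of the results of \cite{qkirwan} are:'' and is a summary of results established in that reference; no proof or sketch is given here. So there is no proof in the present paper to compare your proposal against.

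That said, your outline is broadly in the spirit of what \cite{qkirwan} does, and the ingredients you list (compactification by allowing sphere bubbles in $X$, twisted bubbles in $X\qu G$, nodal degenerations; relative obstruction theory $R\pi_* u^* T_{X/G}$ over the multiplihedron; boundary analysis to get the $\star$-homomorphism property) are the right ones. One point worth flagging: in part (c) you describe the argument as a ``one-parameter family'' interpolating between two compositions, but the actual mechanism in \cite{qkirwan} is the factorization of the codimension-one boundary of $\ol{\M}_{n+2,1}^G(\bA,X)$ itself, together with an adiabatic-limit/quantization result relating gauged invariants to ordinary Gromov--Witten invariants of $X\qu G$. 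The $\star$-homomorphism property is then a formal consequence of the ``morphism of CohFT-type structures'' axioms encoded by the multiplihedron combinatorics, rather than an ad hoc one-parameter cobordism. Your sketch conflates these slightly, though the underlying idea is correct.
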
 

By analogy with the classical case one hopes to obtain a presentation
of the quantum cohomology algebra $T_{\kappa_X^G(\alpha)} QH(X \qu
G,\Q)$ by showing that $D_\alpha \kappa_X^G$ is surjective and
computing its kernel.  This hope leads to the following strong and
weak quantum version of Kirwan surjectivity.  In the strong form, one
might hope that $\kappa_X^G$ has infinite radius of convergence,
$\kappa_X^G$ is surjective, and $D_\alpha \kappa_X^G$ is surjective
for any $\alpha \in QH_G(X,\Q)$.  More modestly, one might hope that
$D_\alpha \kappa_X^G$ is surjective for $\alpha$ in a formal
neighborhood of a rational symplectic class $\omega \in H_2^G(X,\Q)$.

We now specialize to the toric case.  Suppose that $G$ is a complex
torus with Lie algebra $\g$ acting on a finite-dimensional complex
vector space $X$.  

\begin{notation} 
\begin{enumerate} 
\item {\rm (Weights)} Let $X_1,\ldots, X_k \subset X$ be the weight
  spaces of $X$ where $\dim(X_j) = 1$ and $G$ acts on $X_j$ with
  weight $\mu_j \in \g^\dual$ in the sense that for $x \in X_j$ and
  $\xi \in \g$ we have
  $\exp(\xi) x = \exp( i \lan \xi, \mu_j \ran), j = 1,\ldots, k$.  We
  assume that the weights $\mu_j \in \g^\dual$ are contained in an
  open half-space, that is, for some $ \nu \in \g$ we have
  $ \lan \nu, \mu_i \ran \in \R_{> 0}, \ i = 1,\ldots, k $.  We also
  assume that the weights $\mu_i$ span $\g^\dual$, so that $G$ acts
  generically locally free on $X$.
\item {\rm (Polarization and semistable locus)} We assume that $X$ is
  equipped with a polarization, that is, an ample $G$-line bundle $L
  \to X$, which we may allow to be rational, that is, an integer root
  of an honest $G$-line bundle.  Let $\omega \in \g^\dual_\Q$ be the
  vector representing the first Chern class of the polarization $
  c_1^G(L) \in H_2^G(X,\Q)$ under the isomorphism $\g^\dual_\Q \cong
  H^2_G(X,\Q)$.  The point $\omega$ determines a rational polarization
  on $X$ with semistable locus given as follows.  
Let  %
\begin{equation} \label{unstable}
\cI(\omega) = \left\{ I\subset \{1,\dots, k\} \ | \ \omega \notin
\sum_{i\in I} \R_{\geq 0} \mu_i \right\} \end{equation} 
be the set of subsets so that $\omega$ is not in the span of the
corresponding weights.  Let $X^I$ be the intersection of coordinate
hyperplanes
\[ X^I=\left\{ (x_1,\ldots, x_k) | x_i =0, \ \forall i \notin I \right\}
.\]
Then
\[ X^{\ss} = X \backslash \bigcup_{I \in \cI(\omega)} X^I .\] 
The stable=semistable condition assumption translates to the condition
for each $I \notin \cI(\omega)$ the weights $\mu_i, i \in I$ span
$\g^\dual$.  In this case the quotient $X \qu G = X^{\ss}/G$ is then a
smooth (possibly empty) proper Deligne-Mumford stack.  We suppose that
$X \qu G$ is non-empty.  
\item {\rm (Quantum Stanley-Reisner ideal)} The {\em quantum
  Stanley-Reisner ideal} is 
\[ QSR_{X,G}(\alpha) := \lan QSR_{X,G}(d,\alpha), d \in H_2^G(X,\Z)
  \ran  \subset QH_G(X,\Q) \]
where 
\[ QSR_{X,G}(d,\alpha) := \prod_{\lan \mu_j, d \ran \ge 0} \mu_j^{\lan
  \mu_j, d \ran} - q^{\lan d, \alpha \ran} \prod_{\lan \mu_j, d \ran
  \leq 0} \mu_j^{-\lan \mu_j, d \ran} .\]
If $\alpha$ is the given symplectic class $\omega$, we write
$QSR_{X,G} := QSR_{X,G}(\omega)$.  The quotient $T_\omega QH_G(X,\Q)/
QSR_{X,G}$ is the {\em quantum Stanley-Reisner a.k.a Batyrev ring}.
\end{enumerate}
\end{notation} 

\begin{example} 
\begin{enumerate} 
\item {\rm (Batyrev ring for projective space)} Let $ G = \C^\times$
  act on $X = \C^k$ by scalar multiplication.  All weights
  $\mu_1,\ldots,\mu_k$ are equal to $1 \in \g_\Z^\dual \cong \Z$ and
  the polarization vector
  $\omega = 1 \in \g^\dual_\Q \cong H_G^2(X,\Q)$.  There is a unique
  subset $I = \emptyset$ in $\cI(\omega)$ and
  $X^I = \{ 0 \} \subset X$.  Thus the semistable locus is
  $X^{\ss} = X - X^\emptyset = X - \{ 0 \}$ and the git quotient is
  $X \qu G = X^{\ss} / G = \P^{k-1}$.  The quantum Stanley-Reisner
  ideal is generated by the single element $QSR_{X,G}(1) = \xi^k - q$.
  The Batyrev ring is $\Lambda[\xi]/\lan \xi^k - q \ran $.
\item 
\label{teardrop2}  
{\rm (Batyrev ring for the teardrop orbifold)} Continuing Example 
\ref{qhex} \eqref{teardrop}, suppose that $G = \C^\times$ acts on $X = \C^2$ with 
weights $1,2$ so that $X \qu G = \P(1,2)$ is a weighted projective 
line.  The Batyrev ring is $\Lambda[\xi]/ \lan (\xi)(2\xi)^2 - q
\ran$.
\item {\rm (Batyrev ring for the $B\Z_2$)} Continuing Example
  \ref{qhex} \eqref{teardrop}, suppose that $G = \C^\times$ acts on
  $X = \C$ with weights $2$ so that $X \qu G = \P(2) \cong B\Z_2$.
  The Batyrev ring is $\Lambda[\xi]/ \lan (2\xi)^2 - q \ran$.
  After specializing $q$, the Batyrev ring is isomorphic to the group
  ring of $\Z_2$.
\label{BZ2}  
\end{enumerate} 
\end{example}

Our main result says that Batyrev's original suggestion \cite{bat:qcr}
for the quantum cohomology is true, after passing to a suitable formal
version of the equivariant cohomology and ``quantizing'' the divisor
classes:

\begin{theorem} \label{main0} For a suitable formal version
  $\widehat{QH}_G(X)$ of the equivariant quantum cohomology $QH_G(X)$
  (see Section \ref{formal}) the linearized quantum Kirwan map
  $D_\omega \kappa_X^G$ induces an isomorphism
  \[T_\alpha \widehat{QH}_G(X,\Q)/ \widehat{QSR}_{X,G}(\omega) \to
  T_{\kappa_X^G(\omega)} QH(X \qu G,\Q).\]
  at the tangent space to the rational symplectic class
  $\omega \in H_G^2(X,\Q)$.
\end{theorem}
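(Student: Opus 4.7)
The plan is to establish three facts whose combination yields the theorem: (i) the quantum Stanley--Reisner ideal $QSR_{X,G}(\alpha)$ lies in $\ker D_\alpha \kappa_X^G$, so the linearization factors through the Batyrev-type quotient; (ii) the induced map is surjective; and (iii) source and target have the same $\Lambda$-dimension.

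For (i), for each $d \in H_2^G(X,\Z)$ I would identify the generator $QSR_{X,G}(d,\alpha)$ as the relation obtained by evaluating $\ev_{d,\infty,*}\ev_d^*$ on the moduli of affine gauged maps of class $d$ from a suitable weighted $\P(1,r)$ to $X/G$. For toric $X=\bigoplus_j X_j$, this moduli is a torus-weighted linear space whose virtual pushforward equates $\prod_{\langle\mu_j,d\rangle\ge 0}\mu_j^{\langle\mu_j,d\rangle}$ with $q^{\langle d,\alpha\rangle}\prod_{\langle\mu_j,d\rangle\le 0}\mu_j^{-\langle\mu_j,d\rangle}$ after pairing against test classes at the infinity marking. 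Because $D_\alpha\kappa_X^G$ is a $\star$-homomorphism, every generator of $QSR_{X,G}(\alpha)$ then lies in its kernel. For (ii), classical Kirwan surjectivity for toric orbifolds, i.e.\ surjectivity of $H_G(X,\Q)\to H(I_{X\qu G},\Q)$, follows from the stacky Danilov--Jurkiewicz theorem applied sector by sector, since each component $X^{g,\ss}/Z_g$ of the inertia stack is itself a toric quotient. The leading term of $D_\alpha\kappa_X^G$ in the $q$-adic filtration on $\Lambda_0$ recovers this classical map, so a Nakayama-type argument over the formal completion upgrades the classical statement to surjectivity of the linearized quantum Kirwan map.

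The main obstacle is (iii): matching $\Lambda$-dimensions, where the target has dimension $\dim H(I_{X\qu G})$ and the source quotient has dimension equal to the cardinality of $\Crit_+(W)$, the critical points of the Givental--Hori--Vafa potential whose image lies in the interior of the moment polytope (counted with multiplicity). I would match these by running a toric minimal model program on $X\qu G$ for a generic symplectic class $\omega$: at each step (flip, divisorial contraction, or Mori fibration), one can track precisely how $\cI(\omega)$ changes as $\omega$ crosses a wall in $\g^\dual$, producing a controlled change in the Batyrev-ring dimension on one side and a controlled change in twisted-sector contributions to $H(I_{X\qu G})$ on the other. Induction on the length of the tmmp reduces the equality to a terminal model (a point, a weighted projective space, or a weighted projective bundle), where it can be checked directly, e.g.\ using \cite{coates:wps}. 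The most delicate step is the bookkeeping across flips, in particular identifying the twisted sectors that emerge when the target acquires new stacky fixed loci under a contraction and matching these against the critical points of $W$ that migrate between chambers. Combining (i)--(iii) gives that the factored map is a surjection between finite-dimensional $\Lambda$-vector spaces of equal dimension, hence an isomorphism.
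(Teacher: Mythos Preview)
Your three-step architecture (kernel containment, surjectivity, dimension match via tmmp) is the same as the paper's, and your treatment of (iii) is essentially what the paper does. However, step (ii) has a genuine gap.

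You claim that the $q$-adic leading term of $D_\alpha\kappa_X^G$ is a classical map $H_G(X,\Q)\to H(I_{X\qu G},\Q)$ that is already surjective, so Nakayama finishes. This is false: the $q=0$ specialization of the quantum Kirwan map is the ordinary Kirwan map $H_G(X,\Q)\to H(X\qu G,\Q)$, landing only in the \emph{untwisted} sector of $H(I_{X\qu G})$. Twisted sectors are reached solely through affine gauged maps of strictly positive class $d$, which carry strictly positive powers of $q$; e.g.\ for $\P(1,2)$ one has $D_\omega\kappa_X^G(\xi^2)=q^{1/2}\theta_-/2$, which vanishes at $q=0$. So a Nakayama argument over $\Lambda_0$ can at best produce surjectivity onto the untwisted sector, and the paper in fact remarks explicitly that over $\C[q]$ (without inverting $q$) the map is \emph{not} surjective. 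There is no ``classical Kirwan surjectivity onto $H(I_{X\qu G})$'' arising as the leading term here.

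The paper's repair is the fractional Batyrev relation: for each $g=\exp(d)$ with nonempty fixed locus in $X^{\ss}$ one exhibits a monomial $\prod_j\mu_j^{\lceil\langle\mu_j,d\rangle\rceil}$ whose image under $D_\omega\kappa_X^G$ equals $q^{\langle d,\omega\rangle}1_{\exp(d)}$ (times a divisor factor) plus terms of strictly higher $q$-valuation. This is proved by a direct computation on $\ol{\M}^G_{1,1}(\bA,X,d)$: one writes down a tautological section whose zero locus is identified with the relevant twisted sector, computes the obstruction bundle Euler class, and checks that boundary strata contribute only at higher $q$-degree. Surjectivity then follows by a recursion on $q$-valuation \emph{after} inverting $q$. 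Your moduli sketch in (i) is close in spirit to this computation, but you have deployed it toward the wrong goal; it is needed for (ii). For (i) itself the paper takes a different route, using that the localized gauged potential is the GKZ hypergeometric function, so the Batyrev differential operators annihilate it and yield relations in $T_{\kappa_X^G(\alpha)}QH(X\qu G)$.
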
 

\begin{remark} 
\begin{enumerate}
\item Many earlier cases of this theorem were known.  Batyrev
  \cite{bat:qcr} proved a similar presentation in the case of convex
  toric manifolds, that is, in the case that the deformations of any
  stable map are un-obstructed.  In the semi-Fano case (that is,
  $ c_1(X \qu G)$ is non-negative on any curve class) a presentation
  was given by Givental \cite{giv:tmp}.  For non-weak-Fano toric
  manifolds, Iritani \cite[5.11]{iritani:conv} gave an isomorphism
  with the Batyrev ring, see also Brown \cite{brown:gw}.  From the
  symplectic point of view a presentation for the quantum cohomology
  of toric manifolds was given in Fukaya et al \cite{fooo:tms}, using
  results of McDuff-Tolman \cite{mct:top} on the Seidel
  representation.  The latter approach uses open-closed Gromov-Witten
  invariants to define a potential counting holomorphic disks whose
  leading order terms are the potential above.  The quantum
  Stanley-Reisner relations were proved by Coates, Corti, Iritani, and
  Tseng \cite[Theorem 5.13]{coates:hodge}, see also Woodward
  \cite{qkirwan1,qkirwan2,qkirwan3}, in papers that appeared after the
  first version of this manuscript.  That these relations generate the
  ideal was expected for some time, see Iritani \cite{iri:integral}.
  Thus the main content of this paper is that these relations suffice.
  A quantization of the Borisov-Chen-Smith presentation of the
  orbifold cohomology \cite{bcs:tdms} was given in Tseng-Wang
  \cite{tw:orb}.  The latter is {\em not} a presentation in terms of
  divisor classes; for example, for weighted projective spaces the
  typical number of generators is much larger than one, while the
  Batyrev ring has a single generator.
\item For the result above to hold the quantum cohomology must be
  defined over the Novikov {\em field}, or at least, that a suitable
  rational power of the formal parameter $q$ has been inverted: over a
  polynomial ring such as $\C[q]$, one does not obtain an surjection
  because certain elements in twisted sectors are not contained in the
  image for $q = 0$.  Thus one sees a Batyrev presentation of the
  quantum cohomology only for non-zero $q$.  The necessity of
  corrections to Batyrev's original conjecture, which involved the
  divisor classes as generators, was noted in Cox-Katz \cite[Example
  11.2.5.2]{ck:ms} for the second Hirzebruch surface and Spielberg
  \cite{sp:gw} for a toric three-fold.  The fact that the change of
  coordinates restores the original presentation was noted in Guest
  \cite{guest:dmod} for semi-Fano toric varieties, and Iritani
  \cite[Section 5]{iritani:conv}, for not-necessarily-Fano toric
  varieties in general, after passing to a formal completion.  See
  Iritani \cite[Example 5.5]{iritani:conv} and Gonz\'alez-Iritani
  \cite[Example 3.5]{gon:seidel} for examples in the toric manifold
  case.
\item Note that Danilov's results \cite{danilov:toric} do not require
  projectivity of the coarse moduli space.  It seems possible that
  quantum cohomology might also be defined for non-projective toric
  varieties.  Namely certain convergence conditions would remove the
  necessity of working over a Novikov ring, and one might have a
  theorem similar to \ref{main0}, but we lack any results in this
  direction.
\end{enumerate} 
\end{remark}

The presentation of the quantum cohomology in Theorem \ref{main0} can
be re-phrased in terms of Landau-Ginzburg potential as follows,
according to suggestions of Givental \cite{gi:eq} and the physicists
related to mirror symmetry.  This formulation will be essential in our
proof of the injectivity of the map in Theorem \ref{main0}.

\begin{notation} \label{bigtorus}
\begin{enumerate} 
\item {\rm (Residual torus)} Let $\ti{G} := (\C^\times)^{k}$ denote
  the ``big torus'' act on $X = \C^k$ in the standard way.  The {\em
    residual torus}
\[T := \ti{G} / G\]
has an induced action on $ X \qu G$.  The Lie algebra $\t$ of $T$
admits a canonical splitting into real and imaginary parts
$ \on{Re} \oplus \on{Im}: \t \to \t_\R \oplus i \t_\R .$ 
Let $T_\R \subset T$ denote the unitary part of $T$
\[ T_\R = \exp(  \t_\R), \quad \t_\R = \on{span}_\R \t_\Z, \quad  \t_\Z =
\exp^{-1}(1)\] 
given by exponentiating the real span $\t_\R$ of the coweights $\t_\Z$
of $T$.  We have an exact sequence of Lie algebras resp. finitely
generated abelian groups
\begin{equation} \label{ses} 0 \to \g \to \ti{\g} \to \t \to 0, \quad
  0 \to \g_\Z \to \ti{\g}_\Z \to \ti{\t}_\Z \to 0 \end{equation}
where $\ti{\t}_\Z := \ti{\g}_\Z / \g_\Z $.  We write $\ti{\t}_\Z$ as
the product of a {\em free part} $\t_\Z$ of $\ti{\t}_\Z$ which is a
lattice in $\t$, and a {\em torsion part} $\Gamma$ which is isomorphic
to the generic stabilizer of $G$ on $X$.  A canonical parametrization
of the residual torus $T = \ti{G}/G$ can be found by row-reduction on
the matrix of weights, see Example \ref{potexamples} below.
\item {\rm (Moment polytope)} The action of $T_\R$ on $X \qu G$ is
  Hamiltonian, with moment map $\Phi: X \qu G \to \t_\R^\dual$ induced
  by the choice of moment map for the action of $\ti{G}_\R = U(1)^{k}$
  on $X$.  Let $\Delta_{X \qu G} \subset \t^\dual_\R$ denote its image
  \[ \Delta_{X \qu G} := \Phi(X \qu G) \]
the {\em moment polytope} of $X \qu G$.
\item {\rm (Facets and spurious inequalities)} Let
  $\nu_j \in \t_\Z, j =1,\ldots, k$ be the inward normal vectors to
  the facets of $\Delta_{X \qu G}$; these are the images of minus the
  standard basis vectors $e_j$ of $\ti{\g}_\R \cong \R^{k}$ under the
  projection $\pi_\t$ to $\t_\R$:
\[  \nu_j = \pi_\t(- e_j), j = 1,\ldots, k .\]
The moment polytope $\Delta_{X \qu G}$ is of the form
\[ \Delta_{X \qu G} = \{ \mu \in \t_\R^\dual \ | \ \lan \mu, \nu_j \ran
  \ge - \omega_j, j = 1,\ldots, k \} \]
with positions of the facets determined by elements $- \omega_j \in
\Q$.  We say that $ \lan \mu, \nu_j \ran \ge - \omega_j $ is a {\em
  spurious inequality} if it does not correspond to a facet of
$\Delta_{X \qu G}$.
\item {\rm (Support constants)} The {\em support constants} $\omega_j$
  defining the positions of the possible facets $F_1,\ldots, F_k$ of
  $\Delta_{X \qu G}$ can be chosen as follows.  Given an extension of
  $\omega$ to $H^2_{\ti{G}}(X,\Q) \cong \Q^{k}$, the constants
  $\omega_j$ are the coefficients of $\omega$.
\item {\rm (Dual torus)} Recall from Iritani \cite[Section
  3]{iri:integral} that the Landau-Ginzburg potential for toric
  orbifolds has domain a certain formal version of a finite cover of
  the dual torus $T^\dual$ to $T$.  Define
\[ \begin{array}{llllll}  T^\dual &= & \Hom(\t_\Z,\C^\times)  &
                                                             \ti{T}^\dual
  & =& 
\Hom(\ti{\t}_\Z, \C^\times) \\ 
 G^\dual &=& \Hom(\g_\Z,\C^\times) & \ti{G}^\dual &=& \Hom( \ti{\g}_\Z^k,
\C^\times) . \end{array} \] 
We have $T^\dual = \ti{T}^\dual/\Gamma$.  Dualizing \eqref{ses} gives
a short exact sequence
\begin{equation}\label{ses2} 0 \to \ti{T}^\dual \to
  \ti{G}^\dual \to G^\dual \to 0 .\end{equation}
In particular $\ti{T}^\dual$ becomes a subgroup of
$\ti{G}^\dual \cong (\C^\times)^k$.  Define the dual group over
$\Lambda$
\[ \ti{T}^\dual(\Lambda) := \Hom(\ti{\t}_\Z,\Lambda - \{ 0 \}) . \]
and similarly for $\ti{G}^\dual(\Lambda)$.  Define an injection
\begin{equation} \label{qe} \iota_\omega: \ti{T}^\dual(\Lambda) \to
  \ti{G}^\dual(\Lambda), \quad \ti{g} \mapsto (q^{\omega_1}, \ldots,
  q^{\omega_k}) \ti{g} \end{equation}
that we call the {\em quantum embedding} of $\ti{T}^\dual$. (The map
$\iota_\omega$ is not a homomorphism.)
\item {\rm (Givental potential)} In the case of trivial generic
  stabilizer, the naive Landau-Ginzburg potential associated to the
  toric stack $X \qu G$ is the function on the dual torus given as a
  sum of monomials whose exponents are the normal vectors to the
  facets of $\Delta_{X \qu G}$ with coefficients $q^{\omega_j}$:
 \begin{equation} \label{givpot} W_{X,G}: \ti{T}^\dual(\Lambda) \to \Lambda, \quad y \mapsto 
  \sum_{j=1}^k q^{\omega_j} y^{\nu_j} .\end{equation} 
  More generally, in the case of not-necessarily trivial generic
  stabilizer let $W_{X,G}$ denote the restriction of the function
  $\ti{g}_1 + \ldots + \ti{g}_k$ to the subset
  $\iota_\omega \ti{T}^\dual \subset \ti{G}^\dual$.  The reader may
  wish to compare with the definition of potential in Fukaya et al
  \cite[Definition 2.1]{fooo:tms}, where the potential is an element
  of a completed power series ring in coordinates
  $y_j^{\pm}, j = 1,\ldots, \dim(T)$.  It was first noticed by
  Givental \cite{giv:hom} that this function is related to the
  Gromov-Witten theory of $X \qu G$.  An explanation from the point of
  view of mirror symmetry was given in Hori-Vafa \cite{ho:mi}, and a
  connection to Floer theory is described in Fukaya et al
  \cite{fooo:toric1}.  In the latter the potential appears as a count
  of holomorphic disks with boundary in a fiber of the moment map.  In
  the later version, the potential receives corrections from nodal
  holomorphic disks, whereas in Givental \cite{giv:hom} and Hori-Vafa
  \cite{ho:mi} there are no corrections.
\end{enumerate} 
\end{notation} 
\begin{remark} 
\begin{enumerate} 
\item {\rm (Elimination of negative powers of $q$)} As it stands, the
  values of $W_{X,G}$ have negative powers of $q$.  However, later we
  will always assume that $0$ is contained in the interior of the
  moment polytope $\Delta_{X \qu G}$.  In this case only positive
  powers $q^{\omega_j}$ of $q$ occur as coefficients in $W_{X \qu G}$.
\item {\rm (Naive small potential versus corrected small potential)}
  For the many purposes (non-displaceability, Batyrev presentation) it
  seems that the naive potential is ``as good as'' the corrected
  potential defined by disk counts in Fukaya et all
  \cite{fooo:toric1}.  A heuristic argument that the two potentials
  are related by a geometrically-defined change of coordinates was
  given in Woodward \cite{wo:gdisk}; for semi-Fano cases it is proved
  in Chan et al \cite{chan:open} that this coordinate change is the
  mirror map from Gromov-Witten theory, while Fukaya et al
  \cite[Theorem 11.1]{fooo:tms} show the existence of some coordinate
  transformation relating the two.  An approach to relating the
  potentials using an open version of the quantum Kirwan map is
  described in Woodward-Xu \cite{wx}.
\end{enumerate}
\end{remark} 

\begin{example} \label{potexamples}
\begin{enumerate} 
\item {\rm (Product of projective lines)} Let $X=\C^4$ and
  $G=(\C^\times )^2$ with weights
  $\mu_1 = (1,0)$, $\mu_2 = (1,0)$, $\mu_3 = (0,1)$, $\mu_4 = (0,1)$ and
  polarization vector $\omega = (0,1,0,1)$.  The git quotient is
  $X \qu G = \P^1 \times \P^1$.  The perpendicular space to the
  weights is found by row-reduction to be the span of the vectors
  $(1,-1,0,0), (0,0,1,-1)$.  With the corresponding parametrization of
  the dual torus $T = (\C^\times)^4/G \cong (\C^\times)^2$ the normal
  vectors to the facets $F_1,F_2,F_3,F_4$ are
\[
\nu_1=(1,0),\ 
\nu_2=(-1,0),\ \nu_3=(0,1),\ \nu_4=(0,-1)
.\]
The moment polytope is $\Delta_{X \qu G} = [0,1]^2$.  The potential is
\[ W_{X,G}(y_1,y_2) = y_1 + q/y_1 + y_2 + q/y_2. \]
\item \label{extra}  {\rm (Projective line with extra term)} The quotient of
  $X = \C^3$ by the action of $G = (\C^\times)^2$ with weights
  $(1,0), (1,1), (-1,1)$ and polarization vector $(3,0,1)$ (which
  projects to $(2,1) \in \g^\dual$) has semistable locus
\[ X^{\ss} = \{ (x_1,x_2,x_3), x_1 \neq 0, (x_2,x_3) \neq 0 \} \]  
and git quotient $X \qu G \cong \P^1$.  The residual torus $T$ has Lie
algebra $\t$ identified with the span of $(-2,1,-1)$ in $\ti{\g}$. The
moment polytope is 
\[ \Delta_{X \qu G} = \{ \mu \in \R \ | \ 2\mu \leq  3, \mu \ge 0, \mu \leq
  1 \} .\]
The first inequality $ 2\mu \leq 3$ is spurious, that is, may be removed
without changing $\Delta_{X \qu G}$.  The potential is
\[ W_{X,G}(y) = q^3/y^2 + y + q/y 
 .\]
\item \label{half} {\rm (Stacky half-point)} Let $X = \C$ with weight
  $\mu_1 = 2$ so that $X \qu G = \P(2) \cong B\Z_2$.  Then
  $\ti{T} \cong \Z_2$ and the embedding
  $\ti{T}^\dual \to \ti{G}^\dual$ is the standard one with image
  $\{ \pm 1 \} \subset \ti{G}^\dual$.  The potential is then the
  isomorphism
\[ W_{X,G}: \ti{T}^\dual(\Lambda) \cong \Z_2 \to \{ \pm 1 \} \subset
  \Lambda .\]
\end{enumerate} 
\end{example} 

\begin{definition} \label{critloc}  {\rm (Critical locus and Jacobian ring)}  
The {\em critical locus} $\Crit(W_{X,G})$ of $W_{X,G}$ is the set of
points with vanishing logarithmic derivatives with respect to the
coordinates on $T^\dual$,
\begin{eqnarray*} \Crit(W_{X,G}) &=&
 \Set{ y \in \ti{T}^\dual(\Lambda) \ |  \partial_\lambda W_{X,G}( y e^\lambda) |_{\lambda = 0} = 0 
\ \ \ \forall \lambda \in \t^\dual_\R } \\ 
&=&
\Set{   y \in \ti{T}^\dual(\Lambda) \ | 
 \sum_{i=1}^k \lan \nu_i, \lambda \ran
q^{\omega_i} y^{\nu_i} = 0 , \ \ \ \forall \lambda \in \t^\dual_\R
 } .\end{eqnarray*}
Define the ring of functions on $\ti{T}^\dual$
\[ \Lambda(\ti{T}^\dual) = \bigoplus_{\lambda \in
  \ti{\t}_\Z} \Lambda y^\lambda .\]
The ideal generated by the logarithmic partial derivatives 
of the potential is 
\[ \lan \partial_\lambda W_{X,G} (y e^\lambda)  |_{\lambda 
  = 0}, \quad \lambda \in \t_\R
 \ran \subset \Lambda(\ti{T}^\dual) .  \]
The {\em Jacobian ring} $\Jac_\Lambda(W_{X,G})$ of the Givental potential 
$W_{X,G}$ is the ring of functions on $\Crit(W_{X,G})$, or more
precisely the 
quotient 
\[ \Jac_\Lambda(W_{X,G}) = \Lambda(\ti{T}^\dual) / \lan 
\partial_\lambda W_{X,G}(y 
e^\lambda) _{\lambda = 0} \ran \]
 If the generic stabilizer is trivial then we have equivalently 
using the notation \eqref{givpot} 
 \[ \Jac_\Lambda(W_{X,G}) = \Lambda[ y^{\pm \nu_1}, \ldots, y^{\pm \nu_k} ] /
 \lan y_i \partial_{y_i} W_{X,G} \ran .\]
  Assuming that $0$ is contained in the interior of the moment 
  polytope and $\omega$ is integral then the potential $W_{X,G}$ is 
  defined over $\C[q]$.   Define the polynomial Jacobian ring
  \[ \Jac(W_{X,G}) = \C[q,y^{\nu_1}, \ldots, y^{\nu_k} ] /
   \lan y_i \partial_{y_i} W_{X,G} \ran .\]
\end{definition}

We wish to define a certain ``positive part'' of $\Crit(W_{X,G})$
whose coordinate ring corresponds to the quantum cohomology of $X \qu
G$.

\begin{definition} \label{jac}
\begin{enumerate} 
\item 
  {\rm (Positive part of the Jacobian ring)}
\label{jacplus} 
Let $\J \subset \Jac(W_{X,G})$ denote the ideal generated by the
elements $q^{\omega_j} y_j, j = 1,\ldots, k$, and
$\widehat{\Jac}(W_{X,G})$ the completion of $\Jac(W_{X,G})$ with
respect to $\J$,
\[
\widehat{\Jac}(W_{X,G}) := \varprojlim_{ m} \Jac(W_{X,G})/ \J^m .\]
Let $\Jac_+(W_{X,G})$ denote the ring obtained from the formal completion 
by inverting $q$ and the variables $y^{\nu_j}$: 
\[ \Jac_+(W_{X,G}) := \widehat{\Jac}(W_{X,G})[q^{-1},y^{-\nu_1},
  \ldots, y^{-\nu_k}] .\]
\item {\rm (Positive part of the critical locus)} The filtered rings
  $\widehat{\Jac}(W_{X,G})$ respectively $\Jac_+(W_{X,G})$ are the
  ring of functions on the formal scheme $\widehat{\Crit}(W_{X,G})$
  resp.  $\Crit_+(W_{X,G})$ obtained by taking a formal neighborhood
  of $(y,q) = (0,0)$ in the closure of $\Crit(W_{X,G})$ with respect
  to the embedding \eqref{qe} resp. and removing the fiber over
  $q =0 $.  The scheme $\Crit_+(W_{X,G})$ represents the locus of
  critical points $y(q)$ of $W_{X,G}$ that have limit $y(q) \to 0$ as
  $q \to 0$ with respect to the injection \eqref{qe}, that is, each
  expression $ q^{\omega_j} y_j, j = 1,\ldots,k $ has only positive
  powers of $q$.  After passing to a cover
  $\Spec \C[q^{1/n},q^{-1}] \to \Spec \C[q,q^{-1}]$ for some $n$ we
  may write each solution near $q =0 $ as a function $y(q)$ of a
  variable $q^{1/n}$, that is, each component of $\Crit(W_{X,G})$
  becomes unramified over $\Spec \C[q,q^{-1}]$.  By a simple case of
  the Grothendieck Existence Theorem \cite{ega3}, any point of
  $\Crit_+(W_{X,G})$ is obtained by completion and removing the locus
  with $q = 0$ from a point of $\Crit(W_{X,G})$ containing
  $(y,q) = (0,0)$.  This ends the Definition.
\end{enumerate} 
\end{definition}

\begin{example} \label{critex} 
\begin{enumerate} 
\item {\rm (Critical locus for a product of projective lines)}
  Continuing the example of a product of projective lines $X \qu G =
  \P^1 \times \P^1$ from Example \ref{potexamples} (a), the critical
  locus is defined by
\begin{eqnarray*} 0 &=& y_1 \partial_{y_1} W_{X,G}(y_1,y_2) = y_1 - q/y_1
                        \\
0 &=& y_2 \partial_{y_2}
W_{X,G}(y_1,y_2) = y_2 - q/y_2 .\end{eqnarray*} 
The solutions are
$ (y_1,y_2) = (\pm \sqrt{q}, \pm \sqrt{q}) \in \Crit(W) \subset
\T^\dual(\Lambda)$.
Under the map \eqref{qe} these map to
$(\pm \sqrt{q}, \pm \sqrt{q}, \pm \sqrt{q}, \pm \sqrt{q}) \in
\ti{G}^\dual$.  All of these solutions approach $y = 0$ as $q \to 0$.
\item {\rm (Critical locus for a projective line with extra term)}
  Continuing the example of the projective line $X \qu G$ with
  potential with extra term from Example \ref{potexamples}
  \eqref{extra} $ W_{X,G}(y) = q^3/y^2 + y + q/y$.  The critical
  points are $y\sim \pm q^{1/2}$ and $y \sim -2q^2$.  Under the
  injection \eqref{qe} these map to
\[y(q) \sim (q^2, \pm q^{1/2}, \pm q^{1/2})  \] 
which converges to $0$ as $q \to 0$; and 
\[y(q) \sim (q^{-1}/4, -2q^{-2}, -q^{-1}/2) \] 
which does not converge to $0$ as $q \to 0$.  
\item \label{half2} {\rm (Critical locus for the stacky half-point)}
  Continuing Example \ref{potexamples} \eqref{half}, let
  $X \qu G \cong \P(2)$ so that
\[ W_{X,G}: \ti{T} \cong \Z_2 \to \{ \pm 1 \} \subset \Lambda \] 
is the potential for the half-point. Then $\Crit(W_{X,G}) = \ti{T}$
and $\Jac(W_{X,G})$ is the group ring on $\Z_2$, isomorphic to the
orbifold cohomology of $\P(2) = B\Z_2$.
\end{enumerate} 
 \end{example}

 An interpretation in terms of critical points that lie over the
 interior of the moment polytope is given in Proposition
 \ref{closure}.  We will prove the following identification with the
 Jacobian ring:

\begin{theorem}   \label{main}  For any rational symplectic class $\omega \in H_2^G(X)$, 
there is a canonical isomorphism 
\begin{equation} \label{ciso} T_{\kappa_X^G(\omega)} QH(X \qu G) 
\to \hJac(W_{X,G}) . \end{equation} 
\end{theorem}  

\begin{remark} 
\begin{enumerate} 
\item The left-hand-side $T_{\kappa_X^G(\omega)} QH(X \qu G)$ of
  \eqref{ciso} is independent of the presentation of $X \qu G$ as a git
  quotient of $X$ by $G$.  On the other hand, the right-hand-side
  $\hJac(W_{X,G}) $ depends on the presentation. 
\item That the rings
  $ T_{\kappa_X^G(\omega)} QH(X \qu G) , \hJac(W_{X,G})$ have the same
  dimension follows in the Fano case $c_1(X \qu G) > 0$ from
  Kouchnirenko's theorem \cite{kouch:poly,at:ang} Theorem \ref{kouch}
  below.  In general, we deduce the dimension equality
  \[ \dim( T_{\kappa_X^G(\omega)} QH(X \qu G) ) =
  \dim(\hJac(W_{X,G})) \]
  from the toric minimal model program and an induction in Theorem
  \ref{equality} below.  A similar procedure is used by Kawamata
  \cite{kaw:der} to show the existence of an exceptional collection in
  the derived category $D^b \on{Coh}(X \qu G)$ of any toric orbifold
  $X \qu G$.
\item The Frobenius manifold structure $QH(Y)$, including the pairing,
  is expected to be equivalent to Saito's Frobenius structure
  corresponding to the Landau-Ginzburg potential $W$, see for example
  Fukaya et al \cite{fooo:tms}.  However, we do not discuss the
  Frobenius inner product in this paper.
\end{enumerate}
\end{remark} 

  We end the introduction with examples 
of the projective plane, written in different ways as a quotient:

\begin{example} 
\begin{enumerate}
\item {\rm (Projective plane as a quotient by a circle action)}
  Suppose that $G = \C^\times$ acts on $X = \C^3$ by scalar
  multiplication.  Suppose that the polarization corresponds to a
  trivial line bundle with a negative weight on the fiber at the
  origin.  The semistable locus is $X^{\ss} = X - \{ 0 \}$ and the git
  quotient is $X \qu G = \P^2$.  We take the residual action of
  $T = (\C^\times)^3/\C^\times$ to have moment polytope in
  $\t^\dual \cong \R^2$ equal to
\[ \Delta_{X \qu G} = \{ (\lambda_1,\lambda_2) \in \R^2 \ | \  \lambda_1 \ge 0,
\lambda_2 \ge 0, \lambda_1 + \lambda_2 \leq 1 \} .\]
The corresponding potential is 
\[ W_{X,G}(y_1,y_2) = y_1 + y_2 + q/y_1y_2 .\]
The critical points are the solutions to 
\begin{eqnarray*} 
 y_1 \partial_{y_1} W_{X,G}(y_1,y_2) &=& y_1 - q/y_1 y_2 = 0 \\  y_2 \partial_{y_2} W_{X,G}(y_1,y_2) 
&=& y_2 - q/y_1 y_2 = 0 .\end{eqnarray*}
Solutions are $ y_1 = y_2, \quad y_1^3 = y_2^3 = q $.  These generators and
relations give a presentation of the quantum cohomology of $\P^2$.
\item 
{\rm (Projective plane as a quotient by a two-torus action)} The
projective plane $\P^2$ can be realized as a git quotient by a
two-dimensional torus as follows.  Suppose that $G = (\C^\times)^2$
acts on $X = \C^4$ with weights $(-1,1),(0,1),(1,1),(1,0)$.  The
symplectic quotient $X \qu G$ is the ``symplectic cut'' of $\C^2$ by
the circle actions with directions $(-1,1), (1,1)$ in the sense of
Lerman \cite{le:sy2}.  The polytope $\Delta_{X \qu G}$ is the
intersection of a quadrant with two half-spaces with directions
$(-1,1), (1,1)$:
\[ \Delta_{X \qu G} = \Set{ (\lambda_1,\lambda_2) \in \R_{\ge 0}^2  |
\lambda_1 + \lambda_2 \leq c_1, - \lambda_1 + \lambda_2 \leq c_2 } \]
for some constants $c_1 < c_2$.  Suppose that the polarization
corresponds to the weight $(2,1)$; this is the right-most chamber in
Figure \ref{3chambers}.

\begin{figure}[ht]
\small \begin{picture}(0,0)%
\includegraphics{3chambers.pstex}%
\end{picture}%
\setlength{\unitlength}{3947sp}%
\begingroup\makeatletter\ifx\SetFigFont\undefined%
\gdef\SetFigFont#1#2#3#4#5{%
  \reset@font\fontsize{#1}{#2pt}%
  \fontfamily{#3}\fontseries{#4}\fontshape{#5}%
  \selectfont}%
\fi\endgroup%
\begin{picture}(5656,3492)(1505,3480)
\put(6614,6227){\makebox(0,0)[lb]{{{{$\mu_3$}%
}}}}
\put(1520,5688){\makebox(0,0)[lb]{{{{$\mu_1$}%
}}}}
\put(3764,6470){\makebox(0,0)[lb]{{{{$\mu_2$}%
}}}}
\put(7146,3519){\makebox(0,0)[lb]{{{{$\mu_4$}%
}}}}
\put(3350,6488){\makebox(0,0)[lb]{{{{$D_2$}%
}}}}
\put(5398,6456){\makebox(0,0)[lb]{{{{$D_1$}%
}}}}
\put(5882,6017){\makebox(0,0)[lb]{{{{$D_2$}%
}}}}
\put(6170,6600){\makebox(0,0)[lb]{{{{$D_2$}%
}}}}
\put(5976,4216){\makebox(0,0)[lb]{{{{$D_1$}%
}}}}
\put(6502,4270){\makebox(0,0)[lb]{{{{$D_2$}%
}}}}
\put(6437,3788){\makebox(0,0)[lb]{{{{$D_3$}%
}}}}
\put(3047,6053){\makebox(0,0)[lb]{{{{$D_3$}%
}}}}
\put(2584,6560){\makebox(0,0)[lb]{{{{$D_4$}%
}}}}
\put(4177,6614){\makebox(0,0)[lb]{{{{$D_4$}%
}}}}
\put(5576,6877){\makebox(0,0)[lb]{{{{$D_4$}%
}}}}
\put(4512,6107){\makebox(0,0)[lb]{{{{$D_3$}%
}}}}
\put(4806,6505){\makebox(0,0)[lb]{{{{$D_2$}%
}}}}
\put(3976,6314){\makebox(0,0)[lb]{{{{$D_1$}%
}}}}
\put(4558,6448){\makebox(0,0)[lb]{{{{4}%
}}}}
\put(4328,6329){\makebox(0,0)[lb]{{{{1}%
}}}}
\put(3141,6369){\makebox(0,0)[lb]{{{{4}%
}}}}
\put(2036,5854){\makebox(0,0)[lb]{{{{1}%
}}}}
\put(5863,6630){\makebox(0,0)[lb]{{{{2}%
}}}}
\put(6008,6469){\makebox(0,0)[lb]{{{{3}%
}}}}
\put(6077,4945){\makebox(0,0)[lb]{{{{2}%
}}}}
\put(6470,4083){\makebox(0,0)[lb]{{{{3}%
}}}}
\end{picture}%

\caption{Chamber structure for git quotients and the moment images of
  the critical values, with multiplicities}
\label{3chambers}
\end{figure}

The semistable locus is 
\[ X^{\ss} = \{ x = (x_1,x_2,x_3,x_4) | x_4 \neq 0, (x_1,x_2,x_3) \neq
0 \} . \]  
The git quotient is $X \qu G = \P^2$.  In particular, the equation $x_4 =0$ does
not define a divisor in $X \qu G$.  The potential is
\[ W_{X,G}(y_1,y_2) = y_1 + y_2 + q/y_1y_2 + q^2 y_1/y_2 .\]
The partial derivatives are
\begin{eqnarray*} \partial_{y_1} W_{X,G}(y_1,y_2)  &=& 1 - q /y_1^2 y_2 + q^2 /y_2, \\  \partial_{y_2} W_{X,G}(y_1,y_2) &=& 1 - q /y_1 y_2^2 - q^2 y_1/ y_2^2
.\end{eqnarray*}
The critical points, to leading order, are 
\[ y_1 \sim y_2 \sim \exp(  2 \pi i k /3)  q^{1/3}, k = 0,1,2 \]
and the two critical points 
%
\[ y_1 \sim \pm i q^{- 1/2}, \quad y_2 \sim - 2 q^2 \]
as shown in Figure \ref{3chambers}.  The first three (resp. second
two) points (resp. do not) define elements of $\Crit_+(W_{X,G})$.
Hence $\Crit_+(W_{X,G})$ consists of three reduced points,
$QH(X \qu G) \cong \C^{\oplus 3}$.  The other pictures in Figure
\ref{3chambers} show the quotients for the other polarizations; the
dotted line represents the ray $\R_{\ge 0} c_1^G(TX)$ generated by the
equivariant first Chern class $c_1^G(TX)$, for which the quotient
$X \qu G$ has a potential with all critical points located at
$0 \in \t^\dual$.  The number of critical points
$y \in \Crit(W_{X,G})$ mapping to each point in
$\Psi(y) \in \Delta_{X \qu G}$ is indicated in Figure \ref{3chambers}.
This ends the example.
\end{enumerate} 
\end{example} 

We thank D. Cox, H. Iritani, D. McDuff, and C. Teleman for helpful
comments.

\section{Quantum Kirwan surjectivity for toric orbifolds} 
\label{formal} 

In this section we prove surjectivity for the linearization of the
quantum Kirwan map on a formal completion of equivariant quantum
cohomology; the surjectivity also holds for the uncompleted cohomology
but does not lead to an isomorphism.  Let $X$ be a smooth polarized
projective $G$-variety, or more generally, a smooth polarized
quasiprojective $G$-variety convex at infinity in the sense of
\cite{qkirwan1,qkirwan2,qkirwan3}, such as a finite-dimensional vector
space with the action of a torus whose weights are contained in a
half-space.  The version of quantum Kirwan surjectivity we need
involves a formal completion of the equivariant quantum cohomology.
In this completion not only the powers of $q$ but also the degrees of
the cohomology classes can go to infinity:

\begin{definition} {\rm (Formal equivariant quantum cohomology ring)} 
\label{completed} 
Let $\widehat{QH}_G(X)$ be the vector space of infinite sums
\[ \widehat{QH}_G(X) = 
\Set{ \sum_{i=1}^\infty q^{\rho_i} \alpha_i
| \alpha_i \in H_G^{a_i}(X), \ \inf_i \rho_i > -\infty, \ \lim_{i
  \to \infty} \rho_i + a_i = \infty }. \]
Equivalently, $\widehat{QH}_G(X)$ is obtained by completing $H_G(X)
\otimes \Lambda_0$ with respect to the degree filtration on $H_G(X)$,
and then inverting $q$.
\end{definition}  

\begin{remark}  {\rm (Other completions)}   
Note that there are various other natural completions.  For example,
completing $H_G(X) \otimes \Lambda$ with respect to the degree
filtration on $H_G(X)$ gives a space of formal sums whose $q$-degree
is not necessarily bounded below.  The quantum Kirwan map does not
extend to this formal completion.  In the toric case the relationship
between various completions is discussed by Fukaya et al in
\cite[Section 12]{fooo:tms}.
\end{remark}

\begin{proposition} {\rm (Extension of the quantum Kirwan map to the
    formal equivariant quantum cohomology)} \label{qkextends} Each
  Taylor coefficient
  $ \kappa_X^{G,n} : QH_G(X)^n \to QH(X \qu G), n \in \Z_{\ge 0} $
  extends to a map $ \widehat{QH}_G(X)^n \to QH(X \qu G)$, still
  denoted $\kappa_X^{G,n}$.
\end{proposition}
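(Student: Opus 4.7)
The plan is to define the extension by the same formula as in the uncompleted setting, applied termwise to the expansions of the inputs, and then to verify that the resulting formal sum is a well-defined element of $QH(X \qu G)$, i.e.\ for each $E \in \R$ only finitely many summands contribute a $q$-exponent at most $E$. Writing each input as $\alpha_j = \sum_i q^{\rho_{j,i}} \alpha_{j,i}$ with $\alpha_{j,i} \in H_G^{a_{j,i}}(X)$, $B_j := \inf_i \rho_{j,i} > -\infty$, and $\rho_{j,i} + a_{j,i} \to \infty$, I would set
\[
\kappa_X^{G,n}(\alpha_1,\ldots,\alpha_n) := \sum_{d,\, i_1, \ldots, i_n} q^{\langle d, \omega \rangle + \sum_j \rho_{j,i_j}}\, \ev_{d,\infty,*}\, \ev_d^*(\alpha_{1,i_1},\ldots,\alpha_{n,i_n}),
\]
and then prove the required finiteness at each $q$-order.

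Two standard facts would control this sum. Convexity-at-infinity of $X$ combined with Gromov compactness for affine gauged maps implies that $\ol{\M}_{n,1}^G(\bA,X,d) = \emptyset$ unless $\langle d,\omega \rangle \ge 0$, and for any constant $C$ only finitely many classes $d$ with nonempty moduli satisfy $\langle d,\omega \rangle \le C$. Second, $\ol{\M}_{n,1}^G(\bA,X,d)$ carries a perfect obstruction theory of virtual dimension $V(d)$ linear in $d$, and since $\ev_{d,\infty,*}\ev_d^*(\alpha_{1,i_1},\ldots,\alpha_{n,i_n})$ is obtained by capping with the virtual fundamental class and pushing forward, it lies in $H_{2V(d) - \sum_j a_{j,i_j}}(\ol{I}_{X \qu G})$ and hence vanishes unless $\sum_j a_{j,i_j} \le 2V(d)$.

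To conclude I would fix $E$ and examine terms of $q$-exponent at most $E$. The bounds $\rho_{j,i_j} \ge B_j$ and $\langle d,\omega \rangle \ge 0$ give $\langle d,\omega \rangle \le E - \sum_j B_j$, leaving only finitely many admissible $d$ by Gromov compactness; for each such $d$ the virtual dimension $V(d)$ takes one of finitely many values. The non-vanishing constraint then forces $a_{j,i_j} \le 2V(d)$, while $\sum_j \rho_{j,i_j} \le E$ together with $\rho_{l,i_l} \ge B_l$ for $l \ne j$ yields $\rho_{j,i_j} \le E - \sum_{l \ne j} B_l$. Hence $\rho_{j,i_j} + a_{j,i_j}$ is bounded above in terms of $E$, the $B_l$, and $V(d)$, and the condition $\rho_{j,i} + a_{j,i} \to \infty$ permits only finitely many admissible indices $i_j$. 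Summing yields finitely many contributing terms, so the extension is well-defined and obviously restricts to $\kappa_X^{G,n}$ on $QH_G(X)^n \subset \widehat{QH}_G(X)^n$.

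The main obstacle is coordinating the Novikov-type bound on the inputs with the virtual dimension bound on the outputs: without the non-negativity $\langle d,\omega \rangle \ge 0$ tying curve classes to $q$-exponents, the potentially unbounded virtual dimensions on non-Fano targets could in principle swamp the decay built into the Novikov condition. The convexity-at-infinity hypothesis on $X$, inherited from \cite{qkirwan}, is precisely the input that supplies this link by keeping $V(d)$ among finitely many values for any truncation at a given $q$-exponent.
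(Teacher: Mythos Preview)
Your proof is correct and follows essentially the same approach as the paper: both combine the properness of the energy map (finitely many nonempty $\ol{\M}_{n,1}^G(\bA,X,d)$ below any energy bound, hence bounded virtual dimension) with the dimension constraint that forces the pushforward to vanish when $\sum_j a_{j,i_j}$ exceeds the virtual dimension plus $\dim(X \qu G)$. Your version makes the multilinear bookkeeping more explicit, but the underlying mechanism---energy bound $\Rightarrow$ finitely many $d$ $\Rightarrow$ bounded $V(d)$ $\Rightarrow$ degree cutoff on inputs---is identical.
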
 

\begin{proof} The statement of the proposition follows from the
  properness result for scaled gauged maps with bounded energy: for
  any $e > 0$, the set of non-empty $\ovl{\M}_{n,1}^G(\bA,X,d)$ for
  which $\lan d,\omega \ran <e $ is finite \cite[Theorem
  1.1]{pscale}. In particular, for any energy bound $e$, the virtual
  dimensions of the components $\ovl{\M}_{n,1}^G(\bA,X,d)$ of energy
  $\lan d, \omega \ran < e$ are bounded from above by some number
  $f(e)$.  Thus if $\alpha_i \in H_G^{a_i}(X), i = 1,\ldots, n$
  satisfy
\[\sum_{i=1}^n a_i > f(e) + \dim(X \qu G)\] 
then the push-forward of $\ev^* (\alpha_1,\ldots,\alpha_n) \in
H(\ovl{\M}_{n,1}^G(\bA,X,d))$ to $\ovl{I}_{X \qu G}$ is zero for reasons
of dimension.  If $\alpha \in H_G(X)$ has degree bounded from below by
$f(e) + \dim(X \qu G)$ then the contribution of
$\kappa_X^{G,n}(\alpha)$ contains only terms with $q$-degree at least
$e$.  The claim follows.
\end{proof} 

We now partially compute the quantum Kirwan map in the toric case.
Let $X$ be a finite-dimensional complex vector space with an action of
a complex torus $G$, with weights $\mu_1,\ldots,\mu_k$ contained in an
open half-space and equipped with a polarization so that the quotient
$X \qu G$ is locally free.  Our first step is to classify the affine
gauged maps which appear in the definition of $\kappa_X^G$:

\begin{theorem} \label{cvort} {\rm (Classification of affine gauged maps in the toric
case)} An affine gauged map to $X/G$ of homology class $d \in
  H_2^G(X,\Q)$ is equivalent to a morphism $u = (u_1,\ldots,u_k): \bA
  \to X$ satisfying the conditions that
\begin{enumerate}
\item \label{first} the degree of $u_j$ is at most $\lan \mu_j,d \ran$; and
\item \label{second} if $ u (\infty) = \left( u_j(\infty) = \begin{cases} u_j^{ \lan \mu_j, d \ran }/
  \lan \mu_j , d \ran !  & \lan \mu_j, d \ran \in \Z_{\ge 0} \\ 0 &
  \text{otherwise} \end{cases} \right)_{j=1}^k $
denotes the vector of leading order coefficients with integer
exponents, then $u(\infty) \in X^{\ss}$.
\end{enumerate} 
Thus $\M_{1,1}^G(\bA,X,d)^G$ resp.  $\M_{1,0}^G(\bA,X,d)^G$ is the
quotient of the space of such morphisms by 
the action of $G$ resp.  the action of
$G$ and translation.
\end{theorem} 

\begin{example} \label{classex} 
{\rm (Examples of the classification of affine gauged maps)} 
\begin{enumerate}
\item \label{jt} {\rm (Jaffe-Taubes classification)} If
  $G = \C^\times$ acts on $X = \C$ with weight $1$ then
  $\M_{1,1}^G(\bA,X,d)$ consists of polynomials $u(z)$ of degree
  exactly $d$ quotiented by the action of $G$.  Since any such
  polynomial $u(z)$ is classified by its roots $z, u(z) =0$, we have 
  $\M_{1,1}^G(\bA,X,d) \cong \Sym^d(\bA)$.
\item \label{half3} {\rm (Stacky half-point)} Continuing Examples
  \ref{potexamples} \eqref{half} and \ref{critex} \eqref{half2},
  suppose $G = \C^\times$ acts on $X = \C$ with weight $2$.  Let $d$
  to be a half-integer.  The moduli stack $\M_{1,1}^G(\bA,X,d)$
  consists of non-zero polynomials $u(z)$ of degree $2d$ quotiented by
  the action of $G$.  Since any such polynomial is classified by its
  roots, $\ovl{\M}_{1,1}^G(\bA,X,d) \cong \Sym^{2d}(\bA)$.  The
  evaluation map to $X \qu G = \P(2)$ maps to the trivial
  resp. twisted sector if $2d$ is even resp. odd.
\item {\rm (Projective space)} If $G = \C^\times$ acts on $X = \C^k$
  by scalar multiplication then $\M_{1,1}^G(\bA,X,d)$ consists of
  tuples $(u_1,\ldots,u_k)$ of polynomials of degree most $d$ such
  that at least one of the polynomials $u_j$ is degree exactly $d$,
  quotiented by the action of $G$.  One sees that
  $\M_{1,1}^G(\bA,X,d)$ is a vector bundle over $\P^{k-1}$ of rank
  $dk$.
\end{enumerate} 
\end{example} 

\begin{proof}[Proof of Theorem]   
  By definition, a morphism $u: \P(1,r) \to X/G$ consists of a
  $G$-bundle $P \to \P(1,r)$ and a section of the associated
  $X$-bundle $u: \P(1,r) \to P \times_G X$.  The bundle can be
  described by a clutching function $z \mapsto z^{\lambda/r}$ for some
  $\lambda \in \g_\Z^\dual$ with $\lambda/r = d$.  The first condition
  \eqref{first} is the condition that a map $\bA \to X$ extend to a
  global section.  The second condition \eqref{second} is that the
  extension maps $\P(r)$ to the semistable locus $X \qu G$.  The
  representability condition for the morphism $u$ is that the image of
  $\P(r)$ is a point $u(\infty)$ in $X \qu G$ with automorphism group
  containing a group $\Z_r$ generated by $\exp(\lambda/r)$, so that
  $\lambda/r$ is the minimal representation of $d$.
\end{proof}

\begin{theorem} \label{weaktoric2}
 {\rm (Quantum Kirwan surjectivity, toric case)}
For any rational symplectic class $\omega \in H_2^G(X,\Q)$, the map
$D_\omega \kappa_X^G: T_\omega \widehat{QH}_G(X) \to
T_{\kappa_X^G(\omega)} QH(X \qu G)$ is surjective.
\end{theorem}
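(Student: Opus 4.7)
The plan is to exploit the sector decomposition of the inertia stack
$$ I_{X \qu G} = \bigcup_{[g]} X^{g,\ss}/Z_g $$
(with $Z_g = G$ since $G$ is a torus) and prove, sector-by-sector, that the image of $D_\omega \kappa_X^G$ hits every class in the $g$-twisted sector for each $g$ with $X^{g,\ss}\neq\emptyset$ — there are only finitely many such $g$, indexed by the stabilizers of semistable points. For each such $g$, I would single out a distinguished homology class $d_g \in H_2^G(X,\Q)$ realizing the $g$-sector and then analyze the contribution of affine gauged maps of class $d_g$ to $\kappa_X^G$ via the classification in Theorem \ref{cvort}.

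\textbf{Sector-by-sector analysis.} For the untwisted sector $g=1$, classical Kirwan surjectivity already gives that $H(X \qu G,\Q)$ is generated by images of $H_G(X,\Q)$. These generators are captured at order $q^0$: the moduli $\M_{n,1}^G(\bA,X,0)$ consists of constant maps to $X^{\ss}$ and $\ev_\infty$ is essentially the canonical inclusion $X^{\ss}/G \hookrightarrow I_{X \qu G}$, so $D_\omega\kappa_X^G$ agrees to leading order with the classical Kirwan map on the untwisted summand of $\widehat{QH}_G(X)$. For a nontrivial sector $g = \exp(2\pi i \xi)$ with $\xi \in \t_\Q$, I would choose $d_g \in H_2^G(X,\Q)$ of minimal symplectic area such that the fractional part of $\lan \mu_j, d_g\ran$ matches the action of $g$ on $X_j$ for each $j$, and $\lan \mu_j, d_g\ran \geq 0$ for all $j$. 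By Theorem \ref{cvort}, an affine gauged map of class $d_g$ is a tuple of polynomials $(u_1,\ldots,u_k)$ with $u_j(\infty) = 0$ whenever $X_j \not\subset X^g$, forcing $u(\infty) \in X^{g,\ss}$. The evaluation $\ev_\infty: \ol{\M}_{n,1}^G(\bA,X,d_g) \to X^{g,\ss}/G$ presents the moduli as a (weighted) projective bundle over the rigidified fixed-point quotient; combining fiber integration of insertions $\alpha_1,\ldots,\alpha_n \in H_G(X,\Q)$ at marked points with classical Kirwan surjectivity applied to the $G$-action on the vector space $X^g$ produces every class in the $g$-twisted sector at order $q^{\lan d_g,\omega\ran}$.

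\textbf{Formal completion and induction.} The order-$q^{\lan d_g,\omega\ran}$ contributions identified above are merely leading terms; higher-order corrections arise from affine gauged maps of larger classes $d > d_g$ that still land in the $g$-sector. These are controlled by an inductive Nakayama-type argument in the $q$-adic topology on $QH(X \qu G)$: given any target $\gamma \in T_{\kappa_X^G(\omega)}QH(X \qu G)$, I would construct, inductively in powers of $q$, elements $\alpha_N \in T_\omega \widehat{QH}_G(X)$ such that $D_\omega\kappa_X^G(\alpha_N) \equiv \gamma \pmod{q^N}$. The use of the formal completion $\widehat{QH}_G(X)$ (which allows cohomological degrees in $X$ to tend to infinity with the $q$-order) is essential here, because cancelling higher-order corrections at each step requires ever larger-degree classes in $H_G(X)$. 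By the properness result for the energy map recorded in the proof of Proposition \ref{qkextends}, the resulting telescoping sum $\alpha = \sum_N (\alpha_N - \alpha_{N-1})$ is a well-defined element of $T_\omega \widehat{QH}_G(X)$ with $D_\omega\kappa_X^G(\alpha) = \gamma$.

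\textbf{Main obstacle.} The crux is the explicit leading-order computation: for each minimal $d_g$ one must identify the fibers of $\ev_\infty:\ol{\M}_{n,1}^G(\bA,X,d_g)\to X^{g,\ss}/(G/\lan g\ran)$ as (twisted) weighted projective bundles encoding the polynomial coefficients in the non-fixed directions, and verify that fiber integration of pull-backs of suitable equivariant classes $\alpha_i \in H_G(X,\Q)$ produces every element of $H(X^{g,\ss}/(G/\lan g\ran))$ that classical Kirwan supplies. The perfect relative obstruction theory on $\ol{\M}_{n,1}^G(\bA,X,d_g)$ complicates the picture, because boundary strata parametrizing maps with bubbled sphere components can also contribute to the virtual push-forward; the principal technical task is to check that these boundary contributions affect only strictly higher $q$-order sector pieces, leaving the leading-order push-forward non-degenerate onto each generator of the target sector. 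Careful bookkeeping of the stacky weights and of the orbifold Euler classes of the fibers will be required.
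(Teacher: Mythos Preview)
Your overall architecture---sector-by-sector leading-order surjectivity followed by a $q$-adic recursion absorbed by the formal completion---is exactly the scheme the paper follows, and your final paragraph correctly identifies the crux. But the paper replaces your vaguer ``fiber integration over a weighted projective bundle'' with a sharper and more explicit computation, Proposition~\ref{fractional} (the \emph{Fractional Batyrev relation}):
\[
D_\omega \kappa_X^G \Bigl( \prod_{\langle \mu_j, d\rangle \ge 0} \mu_j^{\lceil \langle \mu_j, d\rangle\rceil}\Bigr)
= q^{\langle\omega,d\rangle}\,1_{\exp(d)} \prod_{\langle \mu_j, d\rangle \le 0} \nu_j^{-\lceil \langle \mu_j, d\rangle\rceil} + (\text{higher order in }q).
\]
This is proved not by integrating over the fibres of $\ev_\infty$ but by writing the inserted class as the Euler class of an explicit bundle, exhibiting a tautological section $\sigma$ (the jet of $u$ at the marking), and localising to $\sigma^{-1}(0)$: on the open stratum $\sigma^{-1}(0)$ maps \emph{isomorphically} to the target twisted divisor class, while on boundary strata $\sigma^{-1}(0)$ is empty because the asymptotic value is forced to be unstable. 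This sidesteps the difficulty you flag, and in particular avoids any need to describe the compactified fibres of $\ev_\infty$ (which are not in fact weighted projective spaces---the open fibres are affine spaces of lower-order coefficients).

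There is also a difference in the choice of $d$. You propose the $d_g$ of \emph{minimal} symplectic area with $\langle\mu_j,d_g\rangle\ge 0$; the paper (Corollary~\ref{unit}) instead takes $d$ with \emph{all} $\langle\mu_j,d\rangle>0$, obtained by translating by a large multiple of a vector $\zeta$ in the open half-space containing the weights. The payoff is that the right-hand side of the Fractional Batyrev relation then has no $\nu_j$-factors and equals $q^{\langle\omega,d\rangle}1_{\exp(d)}$ on the nose; multiplying the input by further $\mu_j$'s for $j\in J$ (with $\langle\mu_j,d\rangle\in\Z$) then produces $1_{\exp(d)}\delta_J$. Since the $1_g\delta_J$ span $H(I_{X\qu G})$ by Danilov--Jurkiewicz applied to each component of the inertia stack, leading-order surjectivity follows, and the recursion you describe finishes the argument. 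Your approach would likely also work, but the paper's route is cleaner precisely because it trades ``minimal $d$'' for ``large $d$'', turning a fibre-integration problem into a section/zero-locus problem.
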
 

\begin{example} {\rm (The case of a free quotient)} If $X \qu G$ is a
  smooth {\em variety}, that is, has no orbifold points, then the
  statement of Theorem \ref{weaktoric2} follows from Kirwan's
  surjectivity result from \cite{ki:coh}, or an explicit description
  of the classical Kirwan map in the toric case. Indeed the leading
  order term (setting the Novikov parameter $q$ to zero) is the
  classical Kirwan map.  The novelty of the above theorem is that in
  case that $X \qu G$ is an orbifold, the twisted sectors in
  $QH(X \qu G)$ are also in the image of the quantum Kirwan map, so
  that $T_{\kappa_X^G(\omega)} QH(X \qu G)$ is a quotient of the usual
  ring of polynomial invariants
  $T_\omega QH_G(X) \cong \on{Sym}(\g^\dual)^G \otimes \Lambda$.
\end{example}

The proof of Theorem \ref{weaktoric2} in general relies on the
following computation which we call a {\em fractional Batyrev
  relation} based on similarity with Batyrev \cite{bat:qcr}.

\begin{notation} 
\begin{enumerate}
\item \label{fc} {\rm (Ceiling)} 
The {\em ceiling} $\lceil x \rceil$ is the smallest integer greater
  than or equal to $x$.
\item {\rm (Classification of twisted sectors)} We identify
  $H_2^G(X) \cong \g$ and $H^2_G(X) \cong \g^\dual$.  Any
  $d \in H_2^G(X)$ thus defines an element $\exp(d) \in G$.  This
  element corresponds to a summand in $H(I_{X \qu G})$ if it has
  non-trivial fixed point set in $X^{\ss}$.
\item {\rm (Identity in each twisted sector)} For such $d$ we denote
  by $1_{\exp(d)} \in H(I_{X \qu G})$ the degree zero class in the
  twisted sector (which will have non-zero degree with respect to the
  grading on $QH(X \qu G)$.)
\item {\rm (Divisor classes in twisted sectors)} For each
  $j = 1,\ldots, k$, we denote by $\nu_j 1_{\exp(d)}$ the (possibly
  empty) divisor class in the twisted sector for $\exp(d)$, obtained
  by setting the $j$-th coordinate $x_j$ equal to $0$.  The product of
  the classes $\nu_j$ with $1_{\exp(d)}$ is the product of these
  cohomology classes in the twisted sector in $H(I_{X \qu G})$.
\end{enumerate}
\end{notation}  

\begin{proposition} [Fractional Batyrev relation] 
\label{fractional} 
For any $d \in \g$ such that $\exp(d)$ has non-empty fixed point set
in $X^{\ss}$ and $\M_{1,1}(\bA,X,d)$ is non-empty,
\[D_\omega \kappa_X^G \left( \prod_{\lan \mu_j, d \ran \ge 0} \mu_j^{\lceil \lan \mu_j, d \ran
  \rceil} \right) = 
\prod_{\lan \mu_j, d \ran \leq 0} \nu_j^{- \lceil
  \lan \mu_j, d \ran \rceil} q^{ \lan \omega,d \ran} 
1_{\exp(d)} + \text{\rm higher order in $q$}
.\]
\end{proposition}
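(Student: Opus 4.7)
The plan is to compute $D_\omega \kappa_X^G(\mu)$ term-by-term from its defining formula and isolate the leading-order contribution in the sector $1_{\exp(d)}$, where $\mu = \prod_{\lan\mu_j, d\ran \ge 0} \mu_j^{\lceil \lan\mu_j, d\ran\rceil}$. Expanding the linearization,
$$D_\omega \kappa_X^G(\mu) = \sum_{n\ge 0,\, d'} \frac{q^{\lan d', \omega\ran}}{n!}\ \ev_{d', \infty, *} \bigl( \ev_{d'}^*(\mu, \omega, \ldots, \omega) \bigr),$$
the component landing in the twisted sector $1_{\exp(d)}$ requires $\exp(d') = \exp(d)$, i.e.\ $d' - d \in H_2^G(X,\Z)$. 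Among such $d'$, the term $d' = d$ with $n = 0$ carries the lowest $q$-valuation: additional $\omega$-insertions contribute extra positive powers of $q$ via the divisor equation, and positive lattice shifts of $d$ raise $\lan d', \omega\ran$ because $\omega$ is ample. It therefore suffices to compute $q^{\lan d, \omega\ran}\, \ev_{d,\infty,*}\,\ev_d^*(\mu)$, with all other contributions absorbed into the ``higher order in $q$'' error.

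Next, by Theorem \ref{cvort}, $\ol{\M}_{1,1}^G(\bA, X, d) = V^{\ss}/G$, where $V = \bigoplus_{\lan\mu_j, d\ran \ge 0} \C^{\lfloor\lan\mu_j, d\ran\rfloor + 1}$ is the space of tuples $(u_j)$ of polynomials of degree at most $\lan \mu_j, d\ran$, with $G$ acting on the $j$-th summand by the character $\mu_j$, and stability the condition that the leading-coefficient vector lies in $X^{g, \ss}$ for $g = \exp(d)$. The evaluation $\ev_\infty$ factors as a leading-coefficient projection $\pi : V \to X^g$ followed by descent to the twisted sector $X^{g, \ss}/G \subset \ol{I}_{X \qu G}$. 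Pulled back through the $G$-equivariant classifying data, the input class $\mu$ agrees on $V/G$ with the equivariant Euler class of $\Ker(\pi)$: for each $j$ with $\lan \mu_j, d\ran \ge 0$ this kernel summand has rank $\lceil \lan \mu_j, d\ran \rceil$ (correctly reconciling the integer and non-integer cases) and $G$-weight $\mu_j$.

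Finally, by the projection formula the Gysin pushforward of the Euler class of $\Ker(\pi)$ along $\pi$ equals $1$ on the image inside $X^{g, \ss}/G$, which is the subvariety cut out by $x_j = 0$ for $j$ with $\lan \mu_j, d\ran \in \Z_{<0}$. The virtual fundamental class then contributes an additional Euler class of the obstruction bundle $\bigoplus_j H^1(\P(1,r), L_j)$, where $L_j$ is the line bundle associated to the character $\mu_j$ on the stacky curve $\P(1,r)$ and the sum runs over $j$ with $\lan \mu_j, d\ran < 0$. By Riemann--Roch on $\P(1,r)$, the ranks and $G$-weights of these $H^1$-pieces supply exactly the remaining powers of $\mu_j$ that upgrade the naive pushforward $\prod_j \nu_j$ into $\prod_{\lan\mu_j, d\ran \le 0} \nu_j^{-\lceil \lan\mu_j, d\ran\rceil} \cdot 1_{\exp(d)}$, using that the classical Kirwan map on the twisted sector sends $\mu_j \mapsto \nu_j$. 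Multiplication by $q^{\lan d, \omega\ran}$ gives the claimed leading term. The main technical hurdle is executing the stacky Riemann--Roch computation for the obstruction bundle on $\P(1,r)$ and correctly matching integer vs.\ non-integer contributions of $\lan \mu_j, d\ran$; everything else is dimension-counting and a Gysin/projection-formula bookkeeping.
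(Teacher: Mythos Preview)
Your main-term computation is essentially the same as the paper's: your ``Euler class of $\Ker(\pi)$'' is exactly the paper's section $\sigma$ built from the derivatives $u_j^{(i)}(z_1)$, and your projection-formula pushforward matches the paper's restriction to $\sigma^{-1}(0)$. The obstruction-bundle count is also the same. However, two steps you dismiss as bookkeeping are where the actual content lies, and your arguments for them are not correct.

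First, you write $\ol{\M}_{1,1}^G(\bA,X,d) = V^{\ss}/G$, but Theorem~\ref{cvort} only describes the open stratum $\M_{1,1}^G(\bA,X,d)$. The compactification has boundary strata consisting of a gauged component of some class $d'$ with $\langle d',\omega\rangle < \langle d,\omega\rangle$ together with bubbles in $X\qu G$, and these must be shown not to contribute. The paper handles this by observing that on such a stratum the zero locus of $\sigma$ forces the leading coefficients $u_j(\infty)$ to vanish for every $j$ with $\langle \mu_j, d-d'\rangle > 0$; since $\langle \omega, d-d'\rangle > 0$, the surviving weights cannot span a cone containing $\omega$, so $u(\infty)$ is unstable and the zero locus is empty.

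Second, your reason for discarding $d'\neq d$ with $\exp(d')=\exp(d)$ is wrong. The divisor equation gives a scalar factor $\langle d',\omega\rangle$, not an extra power of $q$, so $\omega$-insertions do not raise the $q$-valuation. More seriously, ``positive lattice shifts of $d$ raise $\langle d',\omega\rangle$ because $\omega$ is ample'' is not an argument: there is no intrinsic positivity on $H_2^G(X,\Z)$, and ampleness does not force $\omega$ to pair positively with every lattice vector. There genuinely exist $d'$ in the same coset with $\langle d',\omega\rangle < \langle d,\omega\rangle$; the reason they do not contribute is again the geometric one above --- on $\ol{\M}_{1,1}^G(\bA,X,d')$ the class $\ev^*\mu$ is represented by a section whose zero locus lands in the unstable points. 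Without this argument your ``higher order in $q$'' claim is unjustified.
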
 

\begin{proof}[Proof of Proposition \ref{fractional}]  
  Recall that $\ovl{\M}_{1,1}^G(\bA,X,d)$ is the stack of once-marked
  stable scaled affine gauged maps to $X$
  \cite{qkirwan1,qkirwan2,qkirwan3}.  In general, this
  compactification allows bubbles in $X \qu G$ and ghost bubbles in
  $X$ when the markings $z_1,\ldots,z_n$ on the domain come together.
  However, since $X$ is affine, there are no non-constant holomorphic
  maps $u: \P^1 \to X$ from projective lines $\P^1$ to $X$.  Thus any
  element of $\ovl{\M}_{1,1}^G(\bA,X,d)$ consists of components that
  are affine gauged maps to $X$, and components that
  are \label{arechange} stable maps to $X \qu G$.  We wish to compute
  the virtual push-forward under
  $\ev_\infty: \ovl{\M}_{1,1}^G(\bA,X,d) \to \ovl{I}_{X \qu G}$ of
\begin{equation} \label{push} 
 \ev^* \prod_{\lan \mu_j, d \ran \ge 0} \mu_j^{\lceil \lan \mu_j, d
   \ran \rceil} = 
\ev^* \Eul \left( \prod_{\lan \mu_j, d \ran \ge 0}
 \C_{\mu_j}^{\lceil \lan \mu_j, d \ran \rceil} \right) .
\end{equation} 
 On the families of affine gauged maps considered here, there is a
 lift of $\ev_\infty$ from the rigidified inertia stack $\ovl{I}_{X \qu
   G}$ to $I_{X \qu G}$, and we may ignore the rigidification.
 Consider the section
\[ \sigma: \ovl{\M}_{1,1}^G(\bA,X,d) \to \ev^* \prod_{\lan \mu_j, d
  \ran \ge 0} \C_{\mu_j}^{\lceil \lan \mu_j, d \ran \rceil} , \quad u
\mapsto ( u_j^{(i)}(z_1))_{j=1,i= 0}^{k,\lceil \lan \mu_j, d \ran \rceil
}\]
consisting of the derivatives $u^{(i)}$ of $u$ at the marking $z_1$;
note this is well-defined because of the scaling on the domain, that
is, we are modding out by translation on the domain only.  On the
stratum $\M_{1,1}^G(\bA,X)$, $\sigma$ has zeroes corresponding to maps
with all lower-order terms vanishing.  The restriction of $\ev_\infty$
to $\sigma^{-1}(0)$ defines an isomorphism
\[ \ev_\infty |_{\sigma^{-1}(0)} \ : \  \sigma^{-1}(0) \to \ovl{I}_{X \qu G}(\exp(d)) \cap \{
x_j = 0, \lan \mu_j, d \ran \leq - 1 \} \]
where $\ovl{I}_{X \qu G}(\exp(d))$ is the sector with stabilizer
$\exp(d)$, defined as the git quotient of the subspace of $X$
corresponding to coordinates $x_j$ with $\lan d , \mu_j \ran \in \Z$.
The obstruction space at any morphism $u$ is the higher cohomology of
the vector bundle $P \times_G X$.  This higher cohomology may be
identified via duality with the span of monomials whose $j$-th
component has degree strictly between $0$ and
$- \lceil \lan \mu_j, d \ran \rceil $.  Thus the obstruction bundle
has Euler class
\[ \Eul\left( \prod_{\lan \mu_j, d \ran \leq -1} \C_{\mu_j}^{- \lceil
  \lan \mu_j, d \ran \rceil - 1} \right) .\]
It follows that the contribution to the pushforward \eqref{push}
from the stratum with irreducible domain is 
\[ 
 \left( \prod_{\lan \mu_j, d \ran \leq -1}
 \nu_j^{- \lceil \lan \mu_j, d \ran \rceil - 1 + 1} \right) 
1_{\exp(d)} 
\]
as claimed. 

Next we examine contributions from the boundary.  Any boundary
configuration in $\ovl{\M}_\Gamma(\bA,X,d) - \M_\Gamma(\bA,X,d)$
contains a component with a marking and a gauged map $u: \bA \to X$ of
class $d'$ with $\lan d', \omega \ran < \lan d,\omega \ran$ together
with other components that are morphisms to $X \qu G$ and affine
gauged maps without markings.  The zero set $\sigma^{-1}(0)$ of
$\sigma$ on such a stratum $\M_\Gamma(\bA,X)$ consists of
configurations where $u: C \to X/G$ has $j$-th component $u_j = 0$ if
$\lan \mu_j, d \ran > \lan \mu_j, d' \ran \in \Z$.  So the components
$u_j$ with non-zero leading order correspond to $j$ with
$\lan \mu_j, d \ran \leq \lan \mu_j, d' \ran$.  Clearly, the convex
hull $\on{hull}(\{ \mu_j \} )$ of such weights cannot contain
$\omega$, since
$ \lan d - d' , \omega \ran > 0 \ \text{but} \ \lan d - d', \mu_j \ran
\leq 0 .$
By the description of the unstable locus in \eqref{unstable}, the
asymptotic limit of points in $\sigma^{-1}(0)$ consist of unstable
points in $X$.  So the zero set $\sigma^{-1}(0)$ is empty.

Finally we consider the integral 
\begin{equation} \label{integral} 
\int_{[\ovl{\M}_{1,1}^G(\bA,X,d')]} \ev^* 
\prod_{\lan \mu_j, d \ran
  \ge 0} \mu_j^{\lceil \lan \mu_j, d \ran \rceil} \end{equation} 
for $d \neq d'$.  The same argument as in the previous paragraph shows
that the integral \eqref{integral} is zero unless
$\lan d,\omega \ran \leq \lan d', \omega \ran$.  But since $\omega$ is
generic, this inequality implies strict inequality.  Hence the
contributions \eqref{integral} from such degrees $d'$ are of higher
energy than the leader order terms.
\end{proof}

\begin{corollary} \label{unit} {\rm (Surjectivity onto twisted units)}  
 For any $g \in G$ with non-trivial stabilizer in $X^{\ss}$, there
 exists an element $d \in \g$ with $\exp(d) = g$ and $ \lan d,\mu_j\ran > 0$
 for all $j = 1,\ldots, k$ and thus
\[ D_\omega \kappa_X^G \left( \prod_{j=1}^k \mu_j^{\lceil \lan \mu_j,
    d \ran \rceil} \right) = 1_{\exp(d)} q^{\lan d, \omega \ran} \quad 
\text{\rm mod higher order in $q$}.\]
\end{corollary}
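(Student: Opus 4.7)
The plan is to deduce this corollary as a direct consequence of the Fractional Batyrev Relation (Proposition \ref{fractional}). There are two things to verify: (i) the existence of an element $d \in \g$ with $\exp(d) = g$ satisfying $\lan \mu_j, d \ran > 0$ for all $j$, and (ii) that for such a $d$ the claimed formula follows from Proposition \ref{fractional}. Once (i) is in hand, (ii) is immediate: on the right-hand side of Proposition \ref{fractional}, the index set $\{j \mid \lan \mu_j, d \ran \leq 0\}$ is empty, so the product over it collapses to $1$, leaving precisely $1_{\exp(d)} q^{\lan d, \omega \ran}$ modulo higher-order corrections in $q$. Simultaneously, the factors on the left side of Proposition \ref{fractional} become $\prod_{j=1}^k \mu_j^{\lan \mu_j, d \ran}$ since every pairing is positive.

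For step (i), fix any preimage $d_0 \in \g$ of $g$ under the exponential map. The standing hypothesis that the weights $\mu_1, \ldots, \mu_k$ lie in an open half-space provides an open non-empty cone
\[
\mathcal{C} \ = \ \left\{ \nu \in \g \ \middle| \ \lan \mu_j, \nu \ran > 0 \ \text{for all } j \right\}.
\]
Since the coweight lattice $\ker(\exp\colon \g \to G)$ has full (real) rank in $\g$ and $\mathcal{C}$ is open, the two sets meet; pick $\tilde\nu \in \mathcal{C}$ with $\exp(\tilde\nu) = 1$. Then for any positive integer $N$, the element $d := d_0 + N \tilde\nu$ still satisfies $\exp(d) = g$, and for $N$ large enough every pairing
\[
\lan \mu_j, d \ran \ = \ \lan \mu_j, d_0 \ran + N \lan \mu_j, \tilde\nu \ran
\]
is strictly positive, as required.

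One small point to confirm along the way is the non-emptiness hypothesis $\M_{1,1}(\bA, X, d) \neq \emptyset$ assumed by Proposition \ref{fractional}: this follows from the classification in Theorem \ref{cvort} by exhibiting an affine gauged map of class $d$ whose $j$-th coordinate is a polynomial of degree $\lceil \lan \mu_j, d \ran \rceil$, with leading coefficients chosen in the non-empty fixed-point component of $\exp(d)$ in $X^{\ss}$ (non-empty by assumption on $g$). The main substantive step is the existence of $d$ in (i); the remainder is a mechanical specialization of the fractional Batyrev relation once $d$ is chosen so that all weight pairings are positive.
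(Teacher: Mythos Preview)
Your proof is correct and follows essentially the same route as the paper's: both use the open-half-space hypothesis to find a coweight lattice vector in the positive cone and translate a preimage of $g$ far enough into that cone, then read off the formula from Proposition~\ref{fractional}. One minor slip in your non-emptiness check: the $j$-th coordinate polynomial should have degree at most $\lfloor \lan \mu_j, d \ran \rfloor$ (not $\lceil \lan \mu_j, d \ran \rceil$) to satisfy the bound in Theorem~\ref{cvort}, but with that correction the construction goes through.
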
 

\begin{proof} Since the weights $\mu_j, j = 1,\ldots, k$ are contained
  in a half-space, there exists a vector $\zeta \in \g_\R$ such that
  $ \lan \zeta,\mu_j \ran > 0$ for $j = 1,\ldots, k$.  Let
  $U \subset \g$ be a compact subset such that $\exp(U) = G$.  Then
  $c \zeta + U$ contains the desired vector $d$, for $c \gg 0$.
\end{proof}

\begin{notation} \label{tsect}  {\rm (Cohomology classes in twisted sectors)} 
  For any $j \in \{1, \ldots, k\}$ such that
  $\lan \mu_j, d \ran \in \Z$ denote by
  $1_{\exp(d)} \delta_j \in H^2(I_{X \qu G})$ the corresponding
  divisor class in the twisted sector corresponding to $\exp(d)$.  Let
\[ 1_{\exp(d)} \delta_J = 1_{\exp(d)} \prod_{j \in J} \delta_j \in
  H(I_{X \qu G}) \] 
  be the classical product of divisor classes in the twisted sector
  for $\exp(d)$.  Since each component of $I_{X \qu G}$ is itself a
  rationally smooth toric stack, any cohomology class of $I_{X \qu G}$
  arises in this way by the classical description by
  Danilov-Jurkiewicz \cite{danilov:toric,jurk:toric1,jurk:toric2}.
\end{notation} 

\begin{corollary} \label{tsurj} {\rm (Surjectivity onto twisted sectors)}  With $d$ as in Corollary 
\ref{unit}, for any subset $J$ of $\{ j, \lan \mu_j, d \ran \in \Z \}$, 
\[ D_\omega \kappa_X^G \left( \prod_{j=1}^k \mu_j^{\lceil \lan \mu_j,
    d \ran \rceil}
\prod_{j \in J} \mu_j \right) = 1_{\exp(d)} \delta_J q^{\lan d, \omega
  \ran} \quad \text{\rm mod higher order in $q$}.\]
\end{corollary}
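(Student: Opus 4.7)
The plan is to adapt the proof of Proposition \ref{fractional} by inserting the additional classes $\prod_{j \in J} \mu_j$ at the marked point and tracking their effect on the pushforward. With $d$ as in Corollary \ref{unit}, every pairing $\lan \mu_j, d \ran$ is a positive integer, so $\lceil \lan \mu_j, d \ran \rceil = \lan \mu_j, d \ran$ for all $j$, and the obstruction contribution in Proposition \ref{fractional} is trivial since $\{ j : \lan \mu_j, d \ran \leq -1 \}$ is empty.

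First, I would reuse the section
$$ \sigma : \ol{\M}_{1,1}^G(\bA,X,d) \to \ev^* \prod_{j=1}^k \C_{\mu_j}^{\lan \mu_j, d \ran} $$
given by the derivatives at the marked point, so that the restriction of $\ev_\infty$ to $\sigma^{-1}(0)$ in the open stratum $\M_{1,1}^G(\bA,X,d)$ yields an isomorphism onto $\ol{I}_{X \qu G}(\exp(d))$, sending $u$ to its leading-order coefficient at infinity. Under this identification, the pullback $\ev^* \mu_j$ of the equivariant divisor class on $X$ is identified, for each $j \in J$, with the divisor class $\delta_j$ on the twisted sector: the $j$-th leading-order coefficient of $u$ at infinity is precisely a section of the line bundle whose first Chern class is $\delta_j$.

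Second, the pushforward of $\ev^*\!\left( \prod_{j=1}^k \mu_j^{\lan \mu_j, d \ran} \prod_{j \in J} \mu_j \right)$ on the stratum with irreducible domain factors as the Euler class producing the fundamental class of $\sigma^{-1}(0)$ times the extra factors $\prod_{j \in J} \ev^* \mu_j$, yielding $1_{\exp(d)} \delta_J \cdot q^{\lan d, \omega \ran}$ after multiplying by the prefactor $q^{\lan d, \omega \ran}$ from the definition of the quantum Kirwan map. The vanishing of boundary contributions and of contributions from classes $d' \neq d$ is established by exactly the argument in Proposition \ref{fractional}: on boundary strata the leading-order coefficients of the affine map of lower class $d'$ are supported on a subset of weights whose convex hull fails to contain $\omega$, so $\sigma^{-1}(0)$ is empty; for $d' \neq d$ a non-vanishing pushforward requires $\lan d', \omega \ran \geq \lan d, \omega \ran$, and genericity of $\omega$ promotes these to strictly higher $q$-order. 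Higher Taylor coefficients of $\kappa_X^G$ involve additional insertions of $\omega$ and contribute only at higher energy.

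The main obstacle is the intertwining of $\ev^* \mu_j$ with $\delta_j$ on $\sigma^{-1}(0)$ for $j \in J$; this is the only place where the extra insertions alter the calculation in Proposition \ref{fractional}, and it amounts to verifying that the tautological trivialization of the line bundle $\C_{\mu_j}$ along the leading-order section matches the standard generator of the Picard group of the corresponding coordinate divisor in the sector $\ol{I}_{X \qu G}(\exp(d))$.
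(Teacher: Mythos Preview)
Your approach is correct and is the natural argument the paper leaves implicit (this corollary carries no separate proof in the text; it is meant to follow by rerunning the section argument of Proposition \ref{fractional} with the extra insertions, exactly as you do). One small correction: with $d$ as in Corollary \ref{unit}, the pairings $\lan \mu_j, d \ran$ are all positive but need not all be integers --- for instance, in the teardrop example with $d = 1/2$ one has $\lan \mu_1, d \ran = 1/2$ --- so the exponents in the displayed product should be read as ceilings $\lceil \lan \mu_j, d \ran \rceil$, consistently with Proposition \ref{fractional}. This does not affect your argument: positivity alone ensures the set $\{ j : \lan \mu_j, d \ran \leq -1 \}$ is empty and hence the obstruction contribution is trivial, and for $j \in J$ the pairing is integral by hypothesis, so your identification of $\ev^* \mu_j|_{\sigma^{-1}(0)}$ with $\delta_j$ via the classical Kirwan map for the twisted sector $X^{\exp(d),\ss}/G$ is valid.
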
 

\begin{proof}[Proof of Theorem \ref{weaktoric2}] 
  For each $(g,J)$ let
  $\alpha_{g,J} := 1_{g} \delta_J \in H(I_{X \qu G})$ and choose an
  element $\ti{\alpha}_{g,J}$ such that
  $D_\omega \kappa_X^G(\ti{\alpha}_{g,J})$ is equal to $\alpha_{g,J}$
  plus terms of higher energy.  A recursion produces an inverse.
  (Note that by choosing a basis, one obtains an inverse involving a
  finite number of classes in $H_G(X)$, that is, the formal completion
  is not necessary for surjectivity.)
\end{proof}  

We give several examples.

\begin{example} \label{teardrop3}
\begin{enumerate} 
\item {\rm (Stacky half-point)} Continuing Example \ref{classex}
  \eqref{half3}, let $X = \C$ with $G = \C^\times$ acting with weight
  $2$, so that $X \qu G = \P(2) = B \Z_2$.  The quantum cohomology is
  $QH(X \qu G) = \Lambda \oplus \Lambda$, corresponding to the
  stabilizers $1,-1$, called the {\em untwisted} and {\em twisted
    sectors}.  We identify $\g^\dual_\Z = \Z$ in the standard way.  By
  the classification Theorem \ref{cvort} the class zero component
  consists of constant maps
\[\ovl{\M}^G_{1,1}(\bA,X,0) = (X - \{ 0 \})/G \cong \P(2).\]
The class-$1/2$ component is of dimension one:
\[{\M}^G_{1,1}(\bA,X,1/2) = \{ c_1 z+ c_0, c_1 \neq 0 \}/G \cong \C/\Z_2
.\]  
If $QH_G(X) = \Lambda[\xi]$ with generator the Euler class $\xi$ of
the weight one representation then the fractional Batyrev relations
give
\[D_\omega \kappa_X^G(1) = 1, \quad D_\omega \kappa_X^G(\xi) =
(q^{1/2}/2) \theta_-, \quad D_\omega \kappa_X^G(\xi^2) = q/4 .\]
Thus $D_\omega \kappa_X^G$ is surjective and we obtain a presentation
\[ T_{\kappa_X^G(\omega)} QH(\P(2)) \cong \Lambda[\xi]/(\xi^2 -
q/4) .\]
Because $c_1^G(X)$ is positive on curve classes, in this case
$\kappa_X^G(0)= 0$ and $\kappa_X^G(\omega)$ is the reduced symplectic
class.  Note that under the identification $\xi \mapsto q^{1/2}/2
\theta_-$ this agrees with the isomorphism of $QH(\P(2))$ with the
group ring of $\Z_2$ in Example \ref{qhex}.
\item 
 {\rm (Teardrop orbifold)} If $G = \C^\times$ acts on $X = \C^2$ with
 weights $1,2$, so that $X \qu G = \P(1,2)$ is the teardrop orbifold,
\[D_\omega \kappa_X^G(\xi) = \theta_+, \quad 
  D_\omega \kappa_X^G(\xi^2) = q^{1/2} \theta_- /2, \quad D_\omega \kappa_X^G(\xi)^3 =
  q/4 .\]  
Thus $D_0 \kappa_X^G$ is surjective and we obtain a presentation
\[ T_{\kappa_X^G(\omega)} QH(\P(1,2)) = \Lambda[\xi]/(\xi^3 - q/4)
.\]
See Coates-Lee-Corti-Tseng \cite{coates:wps} and Mann \cite{mann:wps}
for more on the quantum cohomology of weighted projective spaces.
\end{enumerate} 
\end{example} 

\begin{remark} {\rm (Generation by divisor classes)} 
  In particular, our version of quantum Kirwan surjectivity implies
  that $QH(X \qu G)$ is generated by the ``divisor classes''
  $D_\omega \kappa_X^G (\mu_i)$.  Note that this is the case even if
  $X \qu G$ has no divisors, for example, when $X \qu G$ is the stacky
  half-point $\P(2)$ then $QH(X \qu G) \cong \Lambda[\Z_2]$ is the
  group ring of $\Z_2$.  These ``divisor classes'' are degree two only
  when working over the larger Novikov ring $\Lambda_X^G$ to achieve a
  $\Z$-grading.
\end{remark} 

\section{Quantum Stanley-Reisner ring and Jacobian ring} 

In this section we identify the quantum Stanley-Reisner ring with the
Jacobian ring, and discuss various extensions \label{extchange} to formal versions.
Consider a git quotient $X \qu G$ of a finite-dimensional complex
vector space $X$ by the action of a complex torus $G$ with weights
$\mu_1,\ldots,\mu_k$ contained in an open half-space and spanning
$\g^\dual$.

\begin{definition} \label{csr}
\begin{enumerate} 
\item {\rm (Classical Stanley-Reisner ideal)} The {\em Stanley-Reisner
    ideal} $SR_X^G$ in $QH_G(X)$ is the ideal generated by products of
  weights 
  \[ \mu_I = \prod_{i \in I} \mu_i \in QH_G^{2|I|}(X) \]
where $I$ is a {\em primitive} collection $I \subset \{1 ,\ldots, k\}$
with respect to the fan of $X \qu G$: the set
$X_I = \{ x_i = 0, i \in I \}$ is contained in the unstable locus of
$X$ and $I$ is a minimal subset with this property.
\item
 {\rm (Classical Stanley-Reisner ring)} The {\em Stanley-Reisner ring}
 is the quotient of $H_G(X)$ by the Stanley-Reisner ideal $SR_X^G$.
\end{enumerate} 
\end{definition} 

\begin{theorem} {\rm (Rational cohomology of a projective 
simplicial toric variety)} 
  Suppose that stable=semistable for the $G$-action on $X$. Then the
  Kirwan map $H_G(X,\Q) \to H(X \qu G,\Q)$ induces an isomorphism
  $H_G(X,\Q)/SR_X^G \to H(X \qu G,\Q) .$ \end{theorem}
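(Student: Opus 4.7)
The plan is to prove the isomorphism in three stages: surjectivity, containment of the Stanley--Reisner ideal in the kernel, and dimension equality.

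For surjectivity, I invoke Kirwan's theorem \cite{ki:coh} quoted in the introduction, which asserts that the natural map $H_G(X,\Q) \to H(X \qu G,\Q)$ is surjective whenever $G$ acts locally freely on the semistable locus. This is exactly our hypothesis, since stable$=$semistable.

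For the containment $SR_X^G \subseteq \ker(H_G(X,\Q) \to H(X \qu G,\Q))$, I use the geometric interpretation of the weights. Under the identification $H_G^2(X,\Q) \cong \g^\dual_\Q$, each weight $\mu_j$ corresponds to the equivariant first Chern class of the $j$-th coordinate line, which equals the equivariant Poincar\'e dual of the coordinate hyperplane $X_j = \{x_j = 0\}$. Since $X$ is smooth and the $X_j$ meet transversely, for any $I \subset \{1,\dots,k\}$ the product $\mu_I = \prod_{j \in I} \mu_j$ is the equivariant Poincar\'e dual of $X_I = \bigcap_{j \in I} X_j$. If $I$ is a primitive collection, then by definition $X_I \subset X \setminus X^{\ss}$, so $\mu_I$ lies in the image of $H_G(X, X^{\ss},\Q) \to H_G(X,\Q)$, and its restriction to $H_G(X^{\ss},\Q) = H(X \qu G,\Q)$ vanishes.

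The main obstacle, as always for this kind of statement, is matching dimensions on the two sides. The plan is to compute the Poincar\'e polynomial of each side and check they agree. On the geometric side, Danilov's computation \cite{danilov:toric} gives the Betti numbers of $X \qu G$ as explicit combinatorial expressions in the $f$-vector of the fan $\Sigma_{X \qu G}$. On the algebraic side, one uses the fact that the Stanley--Reisner ring $H_G(X,\Q)/SR_X^G$ is a finitely generated graded ring whose Hilbert series equals the appropriate $f$-polynomial of $\Sigma_{X \qu G}$ --- a standard computation in Stanley--Reisner theory, valid because the stable$=$semistable hypothesis guarantees $\Sigma_{X \qu G}$ is simplicial. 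Comparing gives equality of total dimensions, so the surjection in the first step, whose kernel contains the Stanley--Reisner ideal by the second step, must induce an isomorphism on the quotient. Alternatively, in the inductive spirit of the present paper, one can run a toric minimal model program for $X \qu G$ and check that both sides transform compatibly under each flip, divisorial contraction, and Mori fibration, reducing to base cases such as weighted projective spaces where the identification is explicit; this alternative route previews the dimension argument used in the quantum setting.
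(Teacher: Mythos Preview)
Your proposal is correct and follows the same route as the paper, which simply cites Danilov--Jurkiewicz and notes that each weight $\mu_j$ maps to the corresponding divisor class under the Kirwan map. You have unpacked that citation into its three constituent steps, which is fine.

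Two small remarks. First, in your dimension step the Hilbert series of $H_G(X,\Q)/SR_X^G$ --- the face ring of the simplicial fan modulo the linear system of parameters coming from $\g^\dual \hookrightarrow \Q^k$ --- is the $h$-polynomial, not the $f$-polynomial; matching this with Danilov's Betti numbers \emph{is} the Danilov--Jurkiewicz theorem, so your third step is really a restatement of the cited result rather than an independent check. Second, the alternative tmmp route you sketch at the end is not what the paper does for this classical statement; the paper reserves the tmmp argument for the quantum version (Theorem~\ref{equality}), where Kouchnirenko replaces Danilov and no direct combinatorial identity is available.
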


\begin{proof} This is essentially the 
Danilov-Jurkiewicz description of the cohomology ring
\cite{danilov:toric,jurk:toric1,jurk:toric2}, using the fact that each
weight function $\mu_i \in \g^\dual_\Q \cong H^2_G(X,\Q)$ maps to the
corresponding divisor class in $H ( X \qu G,\Q)$ under the classical
Kirwan map.  A description from the point of view of equivariant
cohomology can be found in Bonavero-Brion \cite{bb:toric}.
\end{proof}

Note that there are no ``linear relations'' in the above description;
these are in the standard description the kernel of the map
$H_{\ti{G}}(X,\Q) \to H_G(X,\Q)$, where $\ti{G}$ is the ``big torus''
from Notation \ref{bigtorus}.

\begin{definition}  {\rm (Formal quantum Stanley-Reisner ring)}
\label{qsr} 
\begin{enumerate} 
\item The {\em formal quantum Stanley-Reisner ideal}
  $\widehat{QSR}_X^G \subset \widehat{QH}_G(X,\Q)$ is the completion
  (or equivalently, the closure) of $QSR_{X,G}$ in
  $\widehat{QH}_G(X,\Q)$.  The quotient 
$\widehat{QH}_G(X,\Q)/
  \widehat{QSR}_X^G$ is the {\em formal Batyrev ring}.
\item The {\em equivariant resp. formal equivariant quantum
  Stanley-Reisner ideal resp. Batyrev ring} are obtained by replacing
  the expressions $\mu_j$ by their unrestricted versions.  Denote by
  $\eps_1,\ldots, \eps_k \in \ti{\g}^\dual \cong H^2_{\ti{G}}(X)$ the
  coordinates (weights) on the big torus $\ti{G}$
acting on $X$ and so that $\mu_1,\ldots,\mu_k \in \g^\dual \cong
QH_G^2(X)$ are their restrictions to $\g$.  The equivariant quantum
Stanley-Reisner ideal is the closure of the ideal generated by
\[ QSR_X^{G,\ti{G}}(d) := \prod_{\lan \mu_j, d \ran \ge 0}
  \eps_j^{\lan \mu_j, d \ran} - q^{\lan d, \omega \ran} \prod_{\lan
    \mu_j, d \ran \leq 0} \eps_j^{-\lan \mu_j, d \ran} .\]
\end{enumerate}
\end{definition}

\begin{example} {\rm (Batyrev relations for a quotient of affine four-space by a two-torus)} 
  Let $G = (\C^\times)^2$ acting on $X = \C^4$ with weights
  $(-1,1),(0,1),(1,1),(1,0)$.  The corresponding chamber structure and
  polytopes of the quotients are shown in the Figure \ref{3chambers}.
  The equivariant quantum Stanley-Reisner relations include
\[ \eps_1^{-d_1 + d_2} \eps_2^{d_2} \eps_3^{d_1 + d_2} \eps_4^{d_1} = 
q^{d_1 + d_2}, \quad -d_1 + d_2,d_1 \ge 0 \]
%
where $\eps_i$ are the equivariant generators from Definition
\ref{qsr}.  In particular we have
\begin{equation} \label{rels} \eps_1 \eps_2 \eps_3 = q, \quad \eps_2
  \eps_3^2 \eps_4 = q^2 .\end{equation}
Notice the first relation in \eqref{rels} defines the quantum
cohomology $QH(X \qu G)$ for the quotient $X \qu
G$ in the right-most chamber in Figure \ref{3chambers}.  The second
relation in \eqref{rels} defines quantum cohomology for the quantum
cohomology in the left-most chamber.  The non-equivariant relations
are
\[ (-\xi_1 + \xi_2)(\xi_2)(\xi_1 + \xi_2) = q, \quad \xi_2 (\xi_1 + \xi_2)^2 \xi_1 =
q^2 .\]
%
\end{example} 

\begin{proposition} {\rm (Inclusion of the Batyrev relations in the kernel
of the quantum Kirwan map)} \cite{qkirwan1,qkirwan2,qkirwan3} \label{ker} The kernel of
  the linearized quantum Kirwan map
\[ D_\alpha \kappa_X^G: T_\alpha QH_G(X) \to T_{\kappa_X^G(\alpha)}
QH(X \qu G)\]
contains the quantum Stanley-Reisner ideal for $\alpha$ in a formal
neighborhood of the symplectic class $\omega \in QH_G^2(X)$.  So there
is a quotient map
\begin{equation} \label{mapsto}
T_\alpha QH_G(X)/QSR_{X,G}(\alpha) \to T_{\kappa_X^G(\alpha)} QH(X \qu
G) .\end{equation}
\end{proposition}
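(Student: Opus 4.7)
The plan is to compute $D_\alpha \kappa_X^G(QSR_{X,G}(d,\alpha))$ directly by the defining integrals over moduli of affine gauged maps, adapting the Euler-class technique introduced in the proof of Proposition \ref{fractional}, and to show that the two summands of $QSR_{X,G}(d,\alpha)$ yield contributions that cancel exactly. Observe that $QSR_{X,G}(d,\alpha)$ is a polynomial in the weights $\mu_j$ (understood via classical cup product on $H_G(X)$), the parameter $\alpha$ entering only through the Novikov prefactor $q^{\lan d,\alpha\ran}$ on the second summand.

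For the first summand $\prod_{\lan \mu_j, d\ran \ge 0} \mu_j^{\lan \mu_j, d\ran}$, I would interpret each factor as the Euler class of $\C_{\mu_j}^{\lan \mu_j, d\ran}$ pulled back to $\ol{\M}_{n+1,1}^G(\bA,X,d')$ by $\ev$, and construct a natural section $\sigma$ recording the Taylor coefficients of $u_j$ up through order $\lan \mu_j, d\ran$ at the marked point, using the classification in Theorem \ref{cvort}. The zero locus $\sigma^{-1}(0)$ is then analyzed as in Proposition \ref{fractional}: by a convex-hull argument exploiting that the weights $\mu_j$ lie in an open half-space, the zero locus is empty unless the class $d'$ pairs suitably against the weights, and the $d' = d$ component contributes, via the obstruction-bundle Euler class, the factor $\prod_{\lan \mu_j, d\ran \le 0} \nu_j^{-\lan \mu_j, d\ran}$ at Novikov energy $q^{\lan d, \omega\ran}$.

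The second summand $q^{\lan d,\alpha\ran}\prod_{\lan \mu_j, d\ran \le 0} \mu_j^{-\lan \mu_j, d\ran}$ contributes at leading order from the class-$0$ stratum (constant affine gauged maps), where the classical Kirwan image gives $\prod_{\le 0} \nu_j^{-\lan \mu_j, d\ran}$; the Novikov prefactor $q^{\lan d,\alpha\ran}$ matches $q^{\lan d, \omega\ran}$ up to corrections in $\alpha - \omega$ which, by a divisor equation applied to the $\alpha$-insertions, match the analogous corrections on the first summand. The two leading-order contributions then cancel.

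The main obstacle lies in controlling the higher-energy contributions (those with $d' \neq d$) and the boundary contributions from stable bubble trees in the compactification $\ol{\M}_{n+1,1}^G(\bA,X,d')$. Because $X$ is affine and admits no non-constant rational curves, any non-constant sphere bubbles are forced into the GIT quotient $X \qu G$, and the scaling structure on $\bA$ further rigidifies the allowed bubble-tree combinatorics. Establishing the exact cancellation to all orders in $q$ — rather than merely modulo higher order as in Proposition \ref{fractional} — is the hard part: it requires an inductive argument on the energy $\lan d', \omega\ran$ matching boundary strata between the two summands, exploiting the master-equation structure underlying the quantum Kirwan map together with the $\star$-homomorphism property of its linearization.
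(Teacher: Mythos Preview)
Your approach is genuinely different from the paper's, and it has a real gap.

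The paper does not attempt a direct cancellation on the moduli of affine gauged maps at all. Instead it invokes the adiabatic limit theorem of \cite{qkirwan}, which identifies the localized gauged graph potential $\tau^G_{X,-}$ with the pushforward under $\kappa_X^G$ of the localized graph potential $\tau_{X\qu G,-}$. The explicit formula for $\tau^G_{X,-}$ shows it solves the Gelfand--Kapranov--Zelevinsky hypergeometric system, and the Batyrev relations are exactly the symbols of the box operators $\Box_d$ of that system. Since $\tau_{X\qu G,-}$ is a fundamental solution of the quantum differential equation on $X\qu G$, any differential operator annihilating $\tau^G_{X,-}$ is carried by $D_\alpha\kappa_X^G$ to one annihilating $\tau_{X\qu G,-}$, hence to a relation in $T_{\kappa_X^G(\alpha)}QH(X\qu G)$. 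This argument is indirect but exact to all orders in $q$ by construction: no energy induction is needed.

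Your proposal, by contrast, tries to upgrade the Euler-class computation of Proposition~\ref{fractional} from ``modulo higher order in $q$'' to an exact identity. You correctly identify this as the hard part, but the sketch you give does not close the gap. The vanishing argument in Proposition~\ref{fractional} for boundary strata uses a convex-hull obstruction that only rules out \emph{lower}-energy contributions to the first summand; it says nothing about higher-energy ones, and there is no evident mechanism for matching those against higher-energy contributions of the second summand stratum by stratum. The ``master-equation structure'' and ``$\star$-homomorphism property'' you appeal to are consequences of the full package in \cite{qkirwan}, not tools one can invoke independently to prove this particular relation; in fact the $\star$-homomorphism property is exactly what one is trying to exploit, and it does not by itself yield the Batyrev relations without the GKZ input. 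If you want a direct proof along your lines, you would essentially be re-deriving the GKZ annihilation property by geometric localization, which is substantially harder than the route the paper takes.
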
 

\begin{proof}
  The adiabatic limit theorem of \cite{qkirwan1,qkirwan2,qkirwan3}
  relates the localized genus zero graph potential
  $\tau_{X \qu G,-} \in H(X\qu G)[[\hbar^{-1}]] $ of $X \qu G$ with
  the localized gauged graph potential
  $\tau_{X,-}^G \in H_G(X)[[\hbar^{-1}]] $ of $X$,
\[ \tau_{X,-}^G(1,\hbar,q) 
= \sum_{d \in H_2^G(X)} q^d \frac{ \prod_{j=1}^k \prod_{m = -\infty}^{\lan \mu_j, d \ran} (\mu_j
  + m\hbar) 
}
{ \prod_{j=1}^k \prod_{m=-\infty}^0 (\mu_j + m \hbar) 
}.
\]
The localized gauged graph potential $\tau_{X,-}^G(1,\hbar,q)$ is
defined by virtual enumeration of quasimaps, or more generally,
Mundet-stable maps to the quotient stack.  The localized gauged
potential is the solution to the Gelfand-Kapranov-Zelevinsky
hypergeometric system, see for example Iritani \cite{iri:gmt},
Cox-Katz \cite[(11.92)]{ck:ms}.  The Batyrev relations correspond to
differential operators of that system,
\[ \Box_d = \prod_{\lan \mu_j, d \ran \ge 0} \partial_j^{\lan \mu_j, d \ran} - q^{\lan \omega, d \ran}
 \prod_{\lan \mu_j, d \ran \le 0} \partial_j^{-\lan \mu_j, d \ran} .\]
Now $\tau_{X \qu G,-}$, as in Givental \cite{gi:eq} is a fundamental
solution for the quantum differential equation on $X \qu G$.  It
follows that any differential operator that annihilates the localized
gauged graph potential, is transformed via $D_\omega \kappa_X^G$ to a
differential operator annihilating the fundamental solution, and so
defines a relation.

The following is a direct, geometric argument using the fundamental
property of the Euler class.  Define bundles
\[E_\pm := \bigoplus_{(\pm \mu_j(d) \ge 0} \ev_1^* \C_{\mu_j}^{\oplus
  \mu_j(d)} .\]
The Euler class of \(E_\pm\) is
\[ \zeta_\pm(d) := \Eul(E_\pm) = \prod_{ \pm \mu_j(d) \ge 0} \mu_j^{
  \mu_j(d)} .\]
Let \(\sigma\)
denote the section of \(E_+\)
given by the taking the derivatives
\(\sigma_{i,j}, i = 1,\dots, d_j = \min(\mu_j(d),\mu_j(d'))\)
of $u$ at $z = z_1$.  Consider the diagram
\begin{equation*}
\begin{tikzcd}[every arrow/.append style={-latex}]
  \sigma^{-1}(0) \arrow{r}{\iota} \arrow{d}{\delta}  & \ovl{\M}_1^G(\bA,X,d')  \\
.\ovl{\M}_1^G(\bA,X,d'-d) &
\end{tikzcd}	
\end{equation*}
defined as follows.  The map \(\iota\)
is the inclusion.  To construct \(\delta\),
note that \(\sigma^{-1}(0)\)
consists of maps whose \(j\)-th
component vanishes to order \(d_j\)
at the marking \(z_1\).
Therefore, for any \([u] \in \sigma^{-1}(0)\)
we may define new map of degree \( d' - d\)
by dividing by the \(i\)-th
component of \(u:C \to X/G\)
on the component of \(C\)
containing \(z_1\)
by \((z-z_1)^{d_i}\)
on the component $C_1$ containing \(z_1\),
and otherwise leaving the map unchanged, to obtain a map denoted
\((z - z_1)^{-d} u \).
Note that this is a change of \(u\)
by the action of an element of the one-parameter subgroup
corresponding to \(d\).  Thus there is a canonical map
\[ \delta: \sigma^{-1}(0) \to \ovl{\M}_1^G(\bA,X,d'-d), \quad u
\mapsto [u / (z - z_1)^d] .\]
Similarly the Euler class of the normal bundle to $\delta$ is, if
non-empty, the product of classes
$\mu_j^{ \min(\mu_j(-d), \mu_j(d'- d))}$ for $\mu_j(d) < 0$.

The remaining factors are accounted for by the difference in perfect
obstruction theories.  We denote by \(p^{d'}\)
the restriction of the map \(p\)
from the universal curve to maps of homology class \(d'\).
To compute the difference in classes we note that (if
\(e^{d'}, e^{d' - d}\) denote the universal evaluation maps)
\begin{eqnarray*} 
 \iota^* [R p^{d'}_{*} e^* T(X/G)] - \delta^* [Rp^{d' - d}_* e^* T(X/G)]
&=& \iota^* [ R p^{d'}_*
     \sum_j (\mO_{z_1} (\mu_j(d')) ] \\&& 
- \delta^* [R p^{d'  - d}_* \sum_j \mO_{z_1}(\mu_j(d' - d)) ] \\
 &=& \iota^* [E_+]- \delta^*  [ E_-]
   .\end{eqnarray*}
 Hence for any class \(\alpha_0 \in H(X \qu G)\) we obtain
\[ 
\int_{\ovl{\M}_1^G(\bA,X,d') } \ev_0^* \alpha_0 \otimes \ev_1^*
\zeta_+(d) = \int_{ \ovl{\M}_1^G(\bA,X,d' - d) } \ev_0^* \alpha_0
\otimes \ev_1^* \zeta_-(d) .\]
By definition of the quantum Kirwan map this implies 
\[ D_0 \kappa_X^G (\zeta_+(d)) = q^d D_0 \kappa_X^G (\zeta_-(d)) .\]
which proves the claim.
\end{proof}

Similar results were obtained for toric manifolds under the name of
mirror theorems in, for example, Iritani \cite{iritani:conv}, by
writing the toric variety as a complete intersection in another Fano
toric variety and applying the Givental formalism, and for toric
orbifolds in Coates, Corti, Iritani, and Tseng \cite{coates:mirror},
\cite[Theorem 5.13]{coates:hodge}.

\begin{example} 
 {\rm (The second Hirzebruch surface, as in \cite[Example
     3.5]{gon:seidel})} The second Hirzebruch surface $F_2$ is a
 quotient of $X = \C^4$ by the $G = \C^{\times,2}$-action with weights
 $(0,1),(-2,1),(1,0),(1,0)$.  Let $p_1,p_2 \in H_2(X \qu G)$ be the
 the zero section and fiber classes.  The divisor classes are
\[ D_1 = p_2, \quad  D_2 = p_2 - 2p_1, \quad  D_3 = D_4 = p_1 .\]
In the case of nef toric varieties it follows from the adiabatic limit
theorem of \cite{qkirwan1,qkirwan2,qkirwan3} that the quantum Kirwan map intertwines the
$I$-function and $J$-function of Givental \cite{gi:eq}, and so must
agree with the mirror transformation for the second Hirzebruch
surface, computed in Cox-Katz \cite[Example 11.2.5.2]{ck:ms}.  Let
$s_1,s_2$ resp $r_1,r_2$ be the coordinates on the torus with Lie
algebra $H^2_G(X,\C)$ resp. $H^2(X \qu G,\C)$ corresponding to the
basis above.  These are isomorphic via Kirwan's map; the variables
$r_1,r_2$ are called $q_1,q_2$ in most of the mirror symmetry
literature but we wish to avoid confusion with the universal Novikov
parameter $q$. The mirror transformation is
\[ s_1 = r_1 / (1 + r_1)^2, \quad s_2 = r_2(1 + r_1) ;\]
the quantum Kirwan map is obtained from inverting this coordinate
transformation and inserting suitable powers of $q$ in the power
series expansion, determined by the symplectic class $\omega$.  Here
we set $q = 1$ for simplicity; if $\log(r_i) = \log(s_i) +
g_i(r_1,r_2)$, $\log(r) = (\log(r_1),\log(r_2))$ then 
\[ \kappa_X^G( \log(s)) = \log(r) = \log(s) + g(s) \]
so that the ``quantum correction'' to the Kirwan map is $g(s)$.  The
image of the divisors classes under the linearized Kirwan map are
given by differentiating the mirror transformation.  If $\ti{D}_j =
D_\omega \kappa_X^G (\mu_j)$, where $\mu_j$ is the $j$-th weight,
then
\[ 
\ti{D}_1 = D_1, \quad 
\ti{D}_2 = D_2 + 2 \frac{r_1}{1 - r_1}  D_2,  \quad 
\ti{D}_3 = D_3 -  \frac{r_1}{1 - r_1}  D_2,  \quad 
\ti{D}_4 = D_4 - \frac{r_1}{1 - r_1} D_2 ;\]
these are called {\em Batyrev elements} in Gonz\'alez-Iritani
\cite{gon:seidel}.  It was noted in Guest \cite{guest:dmod} for
semi-Fano toric varieties, and Iritani \cite[Section 5]{iritani:conv},
Gonz\'alez-Iritani \cite[Example 3.5]{gon:seidel} these elements satisfy
the Batyrev relations with respect to the variables $s_1,s_2$, which
for $d = (d_1,d_2) \in \Z^2_{\ge 0}, d_2 - 2 d_1 \ge 0$ read
\[ \ti{D}_1^{d_2} \star_{\log(s)} \ti{D}_2^{d_2 - 2d_1}
\star_{\log(s)} \ti{D}_3^{d_1} \star_{\log(s)} \ti{D}_4^{d_1} =
s_1^{d_1} s_2^{d_2} .\]
The effect of using the bulk-deformed quantum product
$\star_{\log(r(s))}$ instead of the small quantum product
$\star_{\log(s)}$ can be computed using the divisor equation, and
leads to the replacement of $s_1^{d_1} s_2^{d_2}$ by $r_1^{d_1}
r_2^{d_2}$ on the right-hand-side.  Indeed, for any classes $\alpha,
\beta \in H(X \qu G)$ and basis $\{ \gamma \}$ for $H(X \qu G)$ with
dual basis $\{ \gamma^\dual \}$
\begin{eqnarray*} 
\alpha \star_{\log(r(s))} \beta  &=& \sum_{n,d,\gamma} \frac{s_1^{d_1}
s_2^{d_2}}{n!} \lan \alpha, \beta, \gamma^\dual, g(s),\ldots, g(s)
\ran_{0,d,n+3}  \ \gamma \\
 &=& \sum_{d,\gamma} s_1^{d_1} s_2^{d_2} \exp( g_1(s)d_1 +
g_2(s) d_2 ) \lan \alpha, \beta, \gamma^\dual \ran_{0,d,3} \ \gamma \\
 &=& \sum_{d,\gamma} 
 (s_1 \exp(g_1(s))^{d_1}  (s_2 \exp(g_2(s)))^{d_2}
\lan \alpha, \beta, \gamma^\dual \ran_{0,d,3} \ \gamma  \\
 &=& \sum_{d,\gamma} 
 (s_1 \exp(g_1(s)))^{d_1}  (s_2 \exp(g_2(s)))^{d_2}
\lan \alpha, \beta, \gamma^\dual \ran_{0,d,3} \ \gamma  \\
 &=& \sum_{d,\gamma} 
 r_1^{d_1}  r_2^{d_2}
\lan \alpha, \beta, \gamma^\dual \ran_{0,d,3}  \ \gamma \\
&=& ( \alpha \star_{\log(s)} \beta)  |_{y = r} .
 \end{eqnarray*}
Thus for the deformed product the Batyrev elements satisfy the Batyrev
relation
\[ \ti{D}_1^{d_2} \star_{\log(r(s))} \ti{D}_2^{d_2 - 2d_1}
\star_{\log(r(s))} \ti{D}_3^{d_1} \star_{\log(r(s))} \ti{D}_4^{d_1} =
r_1(s)^{d_1} r_2(s)^{d_2} \]
which is a special case of Proposition \ref{ker}. It would interesting
to derive the formula for $\kappa_X^G$ above directly from the
geometric definition of the quantum Kirwan map.
\end{example} 

Motivated by considerations from mirror symmetry, Givental
\cite{giv:tmp} and later Hori-Vafa \cite{ho:mi}, proposed a description
of the quantum cohomology in terms of the {\em Jacobian ring} of
functions on the critical locus of a certain function, arising as the
{\em Landau-Ginzburg potential} of the mirror sigma model.  In
particular, Givental \cite{giv:tmp} proved an isomorphism of the
quantum cohomology of a smooth Fano toric variety with the Jacobian
ring.

\begin{proposition} {\rm (Isomorphism of the Batyrev ring with the Jacobian ring
of the naive potential)} \label{ident} 
\[
QH_G(X)/QSR_{X,G} \to \Jac(W_{X,G}), \ [\mu_j] \mapsto [q^{\omega_j}
y_j], j = 1,\ldots, k
\]
 is well-defined and induces an isomorphism.  
\end{proposition}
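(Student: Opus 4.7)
My plan is to verify well-definedness, identify both sides as quotients of a common subring of $\Lambda[T^\dual]$, and finally address the resulting saturation question. For well-definedness, I apply the dual of the short exact sequence $0 \to \g \to \ti{\g} \to \t \to 0$, namely $0 \to \t^\dual \to \ti{\g}^\dual \to \g^\dual \to 0$. Linear dependences among the $\mu_j \in \g^\dual$ correspond exactly to vectors $(a_j) \in \t^\dual \subset \ti{\g}^\dual$, and since $\nu_j = \pi_\t(-e_j)$, the relation $\sum_j a_j \mu_j = 0$ maps to $\sum_j a_j q^{\omega_j} y^{\nu_j} = -\partial_\lambda W_{X,G}$ for the corresponding $\lambda \in \t^\dual$, which lies in the Jacobian ideal. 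For the $QSR$ generator associated to $d \in \g_\Z$, the two products have matching $y$-exponents, since $\sum_j \mu_j(d) \nu_j = \pi_\t(-\sum_j \mu_j(d) e_j) = -\pi_\t(d) = 0$ by exactness, and matching $q$-exponents, since $\omega = \sum_j \omega_j \mu_j$ in $\g^\dual$ yields $\sum_j \mu_j(d) \omega_j = \lan d, \omega \ran$; the generator thus vanishes already in $\Lambda[T^\dual]$.

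For the isomorphism, I factor through the subring $\Lambda[z_1, \ldots, z_k] \subset \Lambda[T^\dual]$ with $z_j := q^{\omega_j} y^{\nu_j}$. The monomial surjection $\Lambda[\eps_1, \ldots, \eps_k] \twoheadrightarrow \Lambda[z_1, \ldots, z_k]$ given by $\eps_j \mapsto z_j$ has kernel a toric binomial ideal, and the same exactness forces any binomial $\prod_j \eps_j^{a_j} - q^c \prod_j \eps_j^{b_j}$ in the kernel to satisfy $a_j - b_j = \mu_j(d)$ for some $d \in \g_\Z$ with $c = \lan d, \omega \ran$. This identifies the kernel as exactly $QSR_{X,G}$, yielding
\[
QH_G(X)/QSR_{X,G} \;\cong\; \Lambda[z_1, \ldots, z_k]/I, \qquad I := ( \textstyle\sum_j a_j z_j \,:\, (a_j) \in \t^\dual ).
\]
On the other hand, $\Jac(W_{X,G}) = \Lambda[T^\dual]/(\Lambda[T^\dual] \cdot I)$, since the logarithmic derivatives $\partial_\lambda W_{X,G}$ are precisely the same linear forms up to sign; the map in question is then induced by the inclusion $\Lambda[z_1, \ldots, z_k] \hookrightarrow \Lambda[T^\dual]$.

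It remains to show that this inclusion induces an isomorphism after the quotient, i.e., that $\Lambda[z_j] + \Lambda[T^\dual] \cdot I = \Lambda[T^\dual]$ and $\Lambda[z_j] \cap \Lambda[T^\dual] \cdot I = I$. For surjectivity, compactness of $\Delta_{X \qu G}$ forces the inward facet normals $\nu_j$ to positively span $\t_\R$, so every $\lambda \in \t_\Z$ lies in the rational cone $\sum_j \Q_{\ge 0} \nu_j$. Assuming $0 \in \on{int}(\Delta_{X \qu G})$, so that all $\omega_j > 0$, the $q$-adic filtration on $\Lambda[T^\dual]$ supports a Nakayama-style inductive reduction: each monomial $y^\lambda$ is expressed as $q^{-N} \prod_j z_j^{c_j}$ up to corrections of strictly higher $q$-order, which are iteratively reduced using the linear Jacobian relations. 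Injectivity then follows from faithful flatness of $\Lambda[T^\dual]$ over $\Lambda[z_j^{\pm 1}]$ (after inverting $q$) combined with the fact that both ideals are generated by the same finite set of linear forms. The principal obstacle is this surjectivity step: passing from integer multiples $N\lambda$ (which lie cleanly in $\Lambda[z_j]$) to the monomial $y^\lambda$ itself requires combining the Jacobian linear relations with the toric binomial relations in $QSR_{X,G}$, controlled through the $q$-adic topology.
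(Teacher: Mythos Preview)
Your well-definedness argument is correct and matches the paper's: linear relations among the $\mu_j$ correspond (up to sign) to the logarithmic derivatives $\partial_\lambda W_{X,G}$, and the $QSR$ generators vanish identically in $\Lambda[T^\dual]$ by the exact sequence $0 \to \g_\Z \to \Z^k \to \t_\Z \to 0$. Your identification of the kernel of $\Lambda[\eps_1,\ldots,\eps_k] \to \Lambda[z_1,\ldots,z_k]$ with the lifted $QSR$ ideal is also correct; every binomial in the kernel is a monomial multiple of some $QSR_{X,G}(d)$.

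The gap is in the last step. You treat the inclusion $\Lambda[z_1,\ldots,z_k] \subset \Lambda[T^\dual]$ as strict and then attempt a $q$-adic Nakayama argument to show it becomes surjective modulo the Jacobian ideal, importing the hypothesis $0 \in \on{int}(\Delta_{X \qu G})$, which is not part of the statement, and leaving the reduction incomplete. In fact the inclusion is an \emph{equality}, and no quotient is needed. The $\nu_j = \pi_\t(-e_j)$ generate $\t_\Z$ as a group because $\pi_\t$ is surjective; compactness of $\Delta_{X \qu G}$ forces them to positively span $\t_\R$, so there is an integer relation $\sum_j S_j \nu_j = 0$ with all $S_j > 0$. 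Adding a large multiple of this relation to any integer expression $\lambda = \sum_j c_j \nu_j$ makes all coefficients non-negative, so the $\nu_j$ generate $\t_\Z$ as a \emph{semigroup}. Hence every $y^\lambda$ is a monomial in the $y^{\nu_j}$ with non-negative exponents, and since $q$ is a unit in $\Lambda$, every $y^\lambda$ already lies in $\Lambda[z_1,\ldots,z_k]$. Once $\Lambda[z_j] = \Lambda[T^\dual]$, your earlier identifications give the isomorphism immediately, and both the extra hypothesis and the saturation/flatness discussion become unnecessary.

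The paper itself is much terser: it records the same well-definedness computation and then defers the isomorphism to Iritani \cite[3.9]{iri:integral}, noting that the argument is $\Lambda$-linear. Your approach is more self-contained; it just needs the semigroup observation in place of the filtration argument.
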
  

\begin{proof} Without the Novikov field, the result is Iritani
  \cite[3.9]{iri:integral}: the linear relations among the weights for
  $\g$ on $X$ correspond to the relations on the coordinate ring of
  $\ti{T}^\dual$ given by the derivatives of the Landau-Ginzburg
  potential $W_{X,G}$. Any such relation is of the form
\[ \sum_{i=1}^k \mu_i \lan \lambda,\nu_i \ran = 0 \]
for some $\lambda \in \t$.  Furthermore the quantum Stanley-Reisner
relations $QSR_{X,G}$ correspond to the relations on the various
coordinates on the big dual torus $\ti{G}^\dual$ restricted to
$\iota_\omega \ti{T}^\dual$:
\begin{eqnarray}
\prod_{\lan \mu_j, d \ran > 0 } 
 \mu_j^{\lan \mu_j, d \ran}  &\mapsto& \prod_{\lan \mu_j, d \ran > 0 } 
q^{  \omega_j \lan \mu_j, d \ran}  
y_j ^{
  \lan \mu_j, d \ran } 
\\ &=& 
q^{  \sum_{\lan \mu_j, d \ran > 0 }  
 \omega_j \lan \mu_j, d \ran}  
\prod_{\lan \mu_j, d \ran > 0 } 
y_j^{\lan \mu_j, d \ran }   \label{line1} \end{eqnarray}
\begin{eqnarray} 
q^{\lan d, \omega \ran} \prod_{\lan \mu_j, d \ran < 0 } 
 \mu_j^{\lan \mu_j, d \ran}  &\mapsto&
 q^{\lan d, \omega \ran} 
\prod_{\lan \mu_j, d \ran < 0 } 
q^{   \omega_j \lan \mu_j, d \ran}  
 y_j^{  \lan \mu_j, d \ran } \\&=& 
q^{d  + \sum_{\lan \mu_j, d \ran < 0 }  
 \omega_j \lan \mu_j, d \ran}  
 \prod_{\lan \mu_j, d \ran < 0 } 
y_j^{ \lan \mu_j, d \ran } . \label{line2} \end{eqnarray}
The quantities \eqref{line1} and \eqref{line2} are equal since
\[ \prod_{j=1 }^k  y_j^{\mu_j} =  1, \quad 
 \sum_{j=1 }^k  \omega_j \lan \mu_j, d \ran =  \lan \omega, d \ran.\]
The claimed isomorphism follows.
\end{proof}  

To compute the dimension of the Jacobian ring we recall the following
theorem of Kouchnirenko's \cite{kouch:poly,at:ang} describing the
number of critical points of a polynomial in several variables.
Consider a function given by restricting a finite
sum of monomials on $\ti{G}^\dual$ to $\ti{T}^\dual$:
\[
W: \ti{T}^\dual \to \C, \quad y \mapsto \sum_{\lambda \in \ti{\g}_\Z}
c_\lambda y^\lambda , \quad c_\lambda \in \C .\]
Let $\pi: \ti{\t}_\Z \to \t_\Z$ denote the projection onto the
free part $\t_\Z$ of $\ti{\t}_Z$.  

\begin{definition}
\begin{enumerate} 
\item {\rm (Newton polytope)} The convex polyhedron
\[\Delta(W) := \on{hull} \{ \pi(\lambda) \in \t_\Z, c_\lambda \neq 0 \} \]
is the {\em Newton polytope} of $W$. 
\item {\rm (Non-degeneracy at infinity)} The function $W$ is {\em
  non-degenerate at infinity} if for any face $F \subset \Delta(W)$,
  the {\em face polynomial}
\[ W_F: T^\dual \to \C, \quad y \mapsto \sum_{\lambda \in F} c_\lambda
  y^\lambda \]
has no critical points.  
\item {\rm (Multiplicity of a critical point)} The {\em multiplicity}
  of an isolated critical point $y \in \Crit(W)$ is the intersection
  multiplicity of $\d W(y)$ at $0$ as in Fulton \cite[Lemma
  12.1]{fu:int}. 
\end{enumerate} 
\end{definition}

\begin{theorem} \label{kouch} {\rm (Kouchnirenko theorem)} 
 Suppose
  that $W : T^\dual \to \C $ is non-degenerate at infinity,
  $\{ \lambda \in \t_\Z, c_\lambda \neq 0 \}$ generate $\t_\Z$, an
  $\{ \lambda - \mu, c_\lambda \neq 0, c_\mu \neq 0 \}$ generate
  $\t_\Z$.  Then the number $\# \Crit(W)$ of zeroes of $\d W$ counted
  with multiplicity is equal to
\[ \# \Crit(W) = \dim(T)!\on{Vol}(\Delta(W)) \# \Gamma
.\]
\end{theorem}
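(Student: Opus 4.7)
The plan is to compute the critical count via intersection theory on a suitable toric compactification of $T^\dual$. Write $n = \dim(T)$. The critical points of $W$ in $T^\dual$ are the common zeros of the $n$ logarithmic derivatives $s_i := y_i \partial_{y_i} W$, each of which is a Laurent polynomial whose Newton polytope equals $\Delta(W)$. First I would choose a smooth projective toric variety $Y \supset T^\dual$ whose fan refines the normal fan of $\Delta(W)$; the polytope then determines a nef line bundle $L \to Y$. After multiplying by a common monomial (which does not affect the critical locus on $T^\dual$) each $s_i$ extends to a global section of $L$, and the quantity we want is the degree of the common zero subscheme of $s_1,\ldots,s_n$ intersected with the open torus.

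Next I would compute the total intersection number on $Y$. Assuming the zero scheme of $(s_1,\ldots,s_n)$ is $0$-dimensional, its degree equals $c_1(L)^n [Y]$, which by the standard toric volume formula is $n!\,\on{Vol}(\Delta(W))$. To reduce to this case one may use invariance of the intersection multiplicity under deformation (Fulton \cite[11.4.4]{fu:int}): for a generic polynomial $W'$ with the same Newton polytope the common zeros of the associated logarithmic derivatives are reduced and lie entirely in $T^\dual$, realizing the count $n!\,\on{Vol}(\Delta(W))$ exactly; specialization back to $W$ preserves the total multiplicity provided no zeros escape to the toric boundary.

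The main step is therefore the boundary analysis. For each cone $\sigma$ of the fan, with corresponding face $F_\sigma \subset \Delta(W)$ and boundary stratum $Y_\sigma \subset Y$, the restriction of $s_i$ to $Y_\sigma$ is (up to a monomial twist) a logarithmic derivative of the face polynomial $W_{F_\sigma}$. By the non-degeneracy-at-infinity hypothesis $W_{F_\sigma}$ has no critical points on the subtorus underlying $Y_\sigma$, so the restrictions $s_i|_{Y_\sigma}$ have no common zero there. The two generation hypotheses enter here: they ensure that the Newton polytope is full-dimensional in $\t_\R^\dual$ (so that $L$ is big and $c_1(L)^n[Y] = n!\,\on{Vol}(\Delta(W))$) and that the relevant sub-torus actions on the boundary strata are effective (so that logarithmic derivatives genuinely detect criticality). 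Combining these, no common zero of $s_1,\ldots,s_n$ lies on $Y \setminus T^\dual$, and all of $c_1(L)^n[Y]$ is realized inside $T^\dual$.

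The hard part will be the boundary analysis described in the previous paragraph: carefully identifying the restriction of each logarithmic derivative with a logarithmic derivative of the appropriate face polynomial on the sub-torus corresponding to $Y_\sigma$, and invoking the lattice generation hypotheses in precisely the right way so that non-vanishing of the face-polynomial derivatives rules out common zeros on every boundary stratum simultaneously. Once that stratum-by-stratum non-vanishing is in hand, the theorem follows by comparing the deformation-invariant intersection number $c_1(L)^n[Y]$ with the sum of local multiplicities $\sum_{y \in \Crit(W)} \mult_y(W)$.
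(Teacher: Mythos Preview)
Your proposal is correct and follows a genuinely different route from the paper. The paper, following Atiyah's survey \cite{at:ang}, embeds $T^\dual$ into the projective space $\P(\oplus_{c_\lambda \neq 0}\C_\lambda)$ via the monomial map, identifies $\Crit(W)$ with the intersection of the closure of the torus orbit (the toric variety $X(W)$ associated to $\Delta(W)$) with a collection of hyperplanes, and then computes $\deg X(W)$ via the Duistermaat--Heckman measure, which is the characteristic measure of $\Delta(W)$. In contrast, you compactify $T^\dual$ by a smooth toric variety $Y$ refining the normal fan of $\Delta(W)$, extend the logarithmic derivatives to sections of the associated nef bundle $L$, and compute $c_1(L)^n[Y]$ directly by the toric volume formula, with a stratum-by-stratum boundary analysis via face polynomials. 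Your approach is more purely algebraic and avoids any appeal to symplectic geometry or Duistermaat--Heckman; the paper's approach is shorter precisely because it packages the boundary analysis into the single statement that non-degeneracy at infinity forces all intersection points into the big torus orbit, and then reads off the degree from the moment-map pushforward. Both are standard proofs of Kouchnirenko/Bernstein, and either is acceptable here.
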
 

\begin{proof}[Sketch of proof] 
  We sketch the idea of proof, following the survey of Atiyah
  \cite{at:ang} who reduces the equality to a computation of the
  volume of a certain toric variety.  It suffices to consider the case
  that the generic stabilizer is $\Gamma = \{ 1 \}$, by considering
  the components of $\ti{T}^\dual$ separately.  Consider the closure
  of an orbit of $T^\dual$ on
  $\P( \oplus_{c_\lambda \neq 0} \C_\lambda)$ of the sum of weight
  spaces $C_\lambda$ with non-zero coefficient $c_\lambda$.  The
  function $W: T^\dual \to \C$ extends to a section on the the
  hyperplane bundle $\mO(1)$.  The critical locus $\Crit(W)$ is a
  subset of the intersection of a collection of hyperplanes
  $H_j \subset \P( \oplus_{c_\lambda \neq 0} \C_\lambda)$ defined by
  $y_j \partial_{y_j} W = 0$.  The non-degeneracy condition implies
  that all intersection points $y \in \Crit(W)$ occur in the open
  torus orbit.  It follows that $\# \Crit(W)$ equals the degree of the
  toric variety $X(W)$ associated to $\Delta(W)$.  A standard
  computation shows that the Duistermaat-Heckman measure
  $\Phi_* \Vol_{X(W)} \in \D'(\t_\R^\dual)$, the push-forward of the
  measure $\Vol_{X(W)}$ defined by the Fubini-Study form on $X(W)$ to
  $\t^\dual_\R$, is the characteristic measure $\mu_{\Delta(W)}$ of
  the polytope $\Delta(W)$.  Hence
\begin{eqnarray*}
 \# \Crit(W) &=& \int_{X(W)} \Eul(\mO_{X(W)}(1)^{\oplus \dim(T)})   \\
            &=& \dim(T)! \int_{X(W)} \exp(\Eul(\mO_{X(W)}(1)))  \\
            &=& \dim(T)! \Vol(\Delta(W)). \qedhere
\end{eqnarray*} 
\end{proof} 

We apply Kouchnirenko's Theorem \ref{kouch} to the potential.  Let
$\ti{T}^\dual_g \cong \ti{T}^\dual$ denote the fiber of
$\ti{G}^\dual \to G^\dual$ over $g \in G$ in \eqref{ses2}.  Let

\[ W_{X,G,g}: \ti{T}^\dual_g \to \C , \quad y \mapsto \sum_{j=1}^k
y_j \]
denote the restriction of the sum of coordinate functions
$y_j, j = 1,\ldots, k$ to $\ti{G}^{\dual}_g$. Let $\Jac(W_{X,G,g})$ be
the coordinate ring of $\Crit(W_{X,G,g})$.  Let $G^{\dual,\circ}$ be
the space of parameters $g \in G^\dual$ for which $W_{X,G,g}$ is
non-degenerate at infinity.  Let $\Delta_{X \qu G}^\dual$ given as the
convex hull of the normal vectors $\nu_j$ of facets of
$\Delta_{X \qu G}$.  Then

\begin{lemma} \label{nondeg} {\rm (Iritani \cite[Propositions 3.7,3.10]{iri:integral})}
  $G^{\dual,\circ}$ is a Zariski-open subset of $G^\dual$, and for
  $g \in G^{\dual, \circ} $ the number $\# \Crit(W_{X,G,g})$ of
  critical points of $W_{X,G,g}$, counted with multiplicity is equal
  to
  \[ \# \Crit(W_{X,G,g}) = \# \Gamma \dim(T)!  \Vol(\Delta_{X
    \qu G}^\dual). \]
  Furthermore, the set of points
  $G^{\dual, \circ \circ} \subset G^{\dual, \circ}$ where the critical
    points are non-degenerate is open and dense.
\end{lemma} 

We now compare the number of critical points to the dimension of the
quantum cohomology.  For each cone $C$ of maximal dimension in the fan
$\cC(X \qu G)$, let $\Sigma(C) \subset \t_\R$ denote the simplex
spanned by its generating vectors $\mu_j$ and the origin $0$, and
$\Vol(\Sigma(C)) \in (0,\infty)$ its volume.  An example is shown in
Figure \ref{dual}, where $X \qu G$ is a Hirzebruch surface, the dual
polytope to $\Delta_{X \qu G}$ contains four simplices of volume
$1/2$, each contributing one to the Euler characteristic
$\chi(X \qu G) = 4$, but these simplices do not cover the dual
polytope and so the dual polytope is ``bigger than expected''.

The following computation of the Euler characteristic the quantum
cohomology of a toric stack can be found in Iritani
\cite{iritani:conv} or less explicitly in the earlier paper of
Borisov-Chen-Smith \cite{bcs:tdms}:

\begin{proposition} \label{maxcones} \cite[Chapter 5]{iritani:conv}  We have 
\[ \dim QH(X \qu G) = \dim(X \qu G)! \sum_{C \in \cC(X \qu G)}
\Vol(\Sigma(C)) \] 
where the sum is over cones of maximal dimension.
\end{proposition} 

\begin{proof} Since the odd cohomology vanishes we have
\[ \dim(QH(X \qu G)) = \chi(I_{X \qu G}) = \chi( I_{X \qu G}^T ) =
  \chi(I_{ (X \qu G)^T }) .\] 
  The correspondence between fixed points and cones of maximal
  dimension proves the identity in the case of smooth toric varieties.
  In the stack case one uses in addition that the order of the
  stabilizer $T_{[x]}$ at the fixed point $[x] \in X \qu G$
  corresponding to $C$ is 
\[ \# T_{[x]} = \dim(X \qu G)! \Vol(\Sigma(C)) \] 
see \cite[5.10]{iritani:conv}.
\end{proof}

\begin{corollary} \label{equal} (\cite[3.10]{iri:integral},
  \cite[5.10]{iritani:conv}) The order of $\Crit(W_{X,G,g})$ for
  general $g$ is at least $\dim(QH(X \qu G))$, with equality if and
  only if $c_1(X \qu G) \ge 0$.
\end{corollary}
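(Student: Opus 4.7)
The strategy is to identify both sides with combinatorial volume quantities attached to the fan of $X \qu G$ and to compare them by elementary convex geometry. Write $\Sigma$ for the fan of $X \qu G$ and $n = \dim(T)$. By Proposition \ref{finite}(b),
\[
\#\Crit(W_{X,G,q}) \;=\; n!\,\Vol(\Delta_{X \qu G}^\dual),
\qquad \Delta_{X \qu G}^\dual := \on{hull}(\nu_1,\ldots,\nu_k),
\]
which handles the right-hand side.

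For the left-hand side I would invoke the Borisov-Chen-Smith description \cite{bcs:tdms} of the inertia stack of a toric DM stack: its components are indexed by ``box elements,'' i.e.\ lattice points $v\in N$ in the semi-open fundamental parallelepiped of some cone $\tau\in\Sigma$, and the $v$-component is the toric substack $X_{\sigma(v)}$ attached to the minimal cone $\sigma(v)\ni v$ (whose fan is the star of $\sigma(v)$ in $\Sigma$). By Danilov-Jurkiewicz, the Betti sum of each simplicial complete toric substack $X_{\sigma(v)}$ equals the number of maximal cones of its star-fan, i.e.\ the number of maximal cones of $\Sigma$ containing $\sigma(v)$. A double-count over pairs $(v,\tau)$ with $\sigma(v)\le\tau\in\Sigma_{\mathrm{max}}$ then gives
\[
\dim QH(X \qu G) \;=\; \sum_{\tau\in\Sigma_{\mathrm{max}}} \mathrm{mult}(\tau),
\qquad \mathrm{mult}(\tau) := |\det(\nu_{i_1},\ldots,\nu_{i_n})|.
\]

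The key identification is now that $\mathrm{mult}(\tau)/n!$ equals the Euclidean volume of the simplex $S_\tau := \on{hull}(0,\nu_{i_1},\ldots,\nu_{i_n})\subseteq\tau$. Distinct $S_\tau$'s have disjoint interiors (since distinct maximal cones meet only in lower-dimensional faces), and each lies in $\Delta_{X \qu G}^\dual$ by convexity, so
\[
\dim QH(X \qu G) \;=\; n!\sum_\tau \Vol(S_\tau) \;=\; n!\,\Vol\Bigl(\bigcup_\tau S_\tau\Bigr) \;\le\; n!\,\Vol(\Delta_{X \qu G}^\dual) \;=\; \#\Crit(W_{X,G,q}),
\]
which is the asserted inequality. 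Equality holds precisely when $\bigcup_\tau S_\tau = \Delta_{X \qu G}^\dual$, i.e.\ when every $\nu_j$ lies on the boundary of $\Delta_{X \qu G}^\dual$ and the $S_\tau$'s triangulate it. Dually, this is the convexity of the piecewise-linear function $\varphi:\t_\R\to\R$ linear on each cone of $\Sigma$ with $\varphi(\nu_j)=1$, which is the standard numerical criterion for the anticanonical class $-K_{X \qu G}=\sum_j D_j$ to be nef, i.e.\ $c_1(X \qu G)\ge 0$. The most delicate step, I expect, will be the box-decomposition identity for $\dim QH(X \qu G)$: one must carefully reconcile the Chen-Ruan degree shifts on twisted sectors with the Betti sums of the component substacks, and ensure the double count is set up correctly.
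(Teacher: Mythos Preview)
Your approach is essentially the paper's own, only spelled out in far more detail: the paper's proof is the single sentence that Kouchnirenko gives $n!\,\Vol(\Delta_{X\qu G}^\dual)$, which equals ``the sum of the volumes of the maximal dimension cones of $X\qu G$'' exactly when $c_1(X\qu G)\ge 0$. Your box-element double count makes explicit what the paper leaves implicit, namely that $\dim QH(X\qu G)=\sum_{\tau\in\Sigma_{\max}}\mathrm{mult}(\tau)$, and your convexity argument for the piecewise-linear function $\varphi$ is precisely the mechanism behind the paper's equality criterion.

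One small remark: your anticipated difficulty about Chen--Ruan degree shifts is not actually an issue here. You are computing the total $\Lambda$-dimension of $QH(X\qu G)=H(I_{X\qu G})\otimes\Lambda$, which is insensitive to the age grading; only the ungraded Betti sums of the twisted sectors enter. So the double count goes through cleanly once you know (from \cite{bcs:tdms}) that box elements index components and that each component is the toric substack attached to the star of the minimal cone.
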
 

Because the dimensions of $QH(X \qu G)$ and $\Jac(W_{X,G,g})$ for
generic $g$ do not match in the non-semi-positive case, see Corollary
\ref{equal}, there must be additional relations, that is, generators
in the kernel of the map $QH_G(X) \to QH(X \qu G)$ in \eqref{mapsto}.
\begin{figure}[ht]
\includegraphics[width=4in]{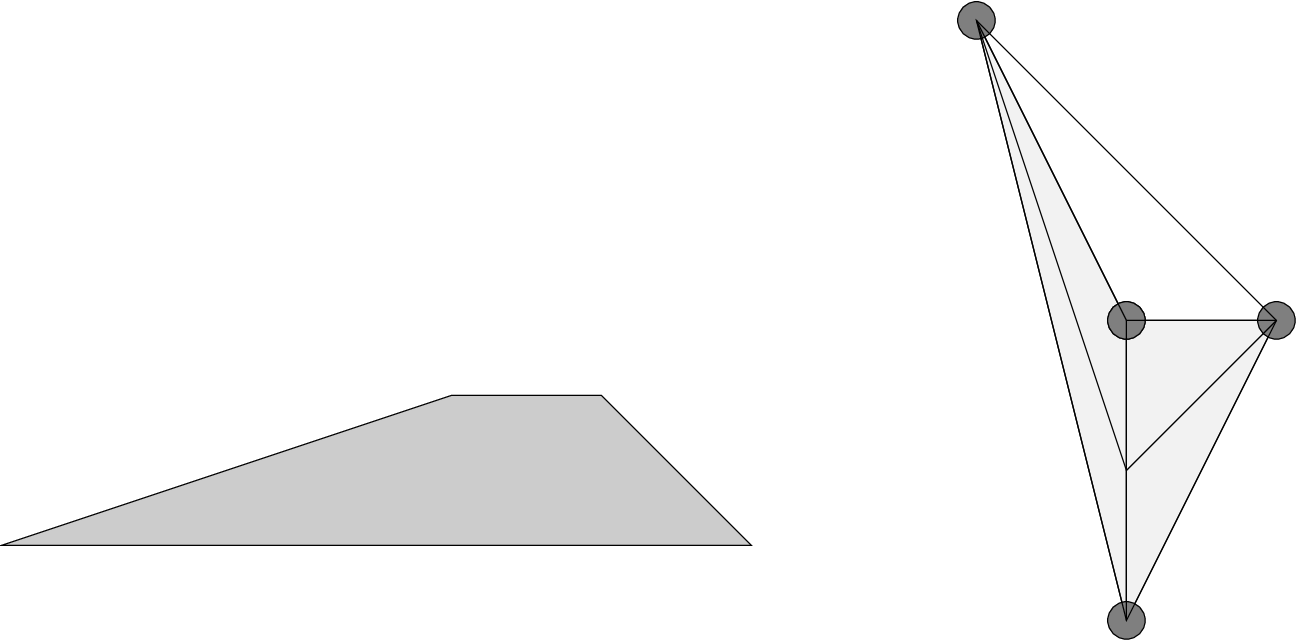}
\caption{A polytope whose dual polytope has too much volume}
\label{dual} 
\end{figure}
\noindent Several authors suggested, and Iritani \cite{iritani:conv} and Fukaya
et al \cite{fooo:tms} proved in the case of toric manifolds, that the
additional relations in the kernel of \eqref{mapsto} correspond to
functions on the critical points $y \in \Crit(W_{X,G})$ outside of the
moment polytope $\Delta_{X \qu G}$.  These extra generators can be
removed by using the formal version of the Jacobian ring of the
potential introduced in Definition \ref{jac} \eqref{jacplus}.

\begin{proposition} {\rm (Isomorphism of formal Batyrev and Jacobian rings)}  
\label{batjac} 
The map from $T_\omega QH_G(X)/QSR_{X,G}$ to $\Jac(W_{X,G})$ of Proposition
\ref{ident} extends to an isomorphism from 
$  T_\omega \widehat{QH}_G(X)/\widehat{QSR}_X^G $ to $ \hJac(W_{X,G})
.$
\end{proposition}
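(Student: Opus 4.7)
The plan is to show that the isomorphism $\varphi: T_\omega QH_G(X)/QSR_{X,G} \to \Jac(W_{X,G})$ from Proposition \ref{ident} is continuous with respect to the natural adic filtrations underlying the two completions, and then to extend by passing to the inverse limit. Since $H_G(X) = \Sym(\g^\dual)$ is a graded polynomial algebra generated in degree two, its degree filtration agrees with the filtration by powers of the graded maximal ideal $\mathfrak{m}$. Because the weights $\mu_1,\ldots,\mu_k$ span $\g^\dual$ (by the standing assumption), we have $\mathfrak{m} = (\mu_1,\ldots,\mu_k)$, so $\widehat{QH}_G(X)$ is obtained (after inverting $q$) as the $\mathfrak{m}$-adic completion of $H_G(X)\otimes \Lambda_0$.

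Let $I \subset T_\omega QH_G(X)/QSR_{X,G}$ denote the image of $\mathfrak{m}$, equivalently the ideal generated by the classes $[\mu_j]$. By Definition \ref{qsr}(a), $\widehat{QSR}_X^G$ is the closure of $QSR_{X,G}$ in $\widehat{QH}_G(X)$, so $T_\omega \widehat{QH}_G(X)/\widehat{QSR}_X^G$ is the $I$-adic completion of $T_\omega QH_G(X)/QSR_{X,G}$. Under $\varphi$ the generators $[\mu_j]$ of $I$ are sent to the generators $[q^{\omega_j} y^{\nu_j}]$ of $\J$ (by the very definition of $\J$ in Definition \ref{jac}(b)), hence $\varphi(I^m) = \J^m$ for every $m \geq 0$. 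Passing to inverse limits therefore produces a ring isomorphism
\[
\varprojlim_m \bigl( T_\omega QH_G(X)/QSR_{X,G} \bigr)/I^m \ \cong \ \varprojlim_m \Jac(W_{X,G})/\J^m \ = \ \hJac(W_{X,G}),
\]
which by the identification of the previous sentence is the desired extension of $\varphi$.

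The main subtlety is checking that the two natural constructions on the left-hand side agree, namely that quotienting $\widehat{QH}_G(X)$ by the closure of $QSR_{X,G}$ yields the same ring as first quotienting by $QSR_{X,G}$ and then $I$-adically completing. This is a standard exactness property of adic completion along a finitely generated ideal, but some attention is required because $\Lambda$ is non-Noetherian; one circumvents this by first working over the Noetherian subring $\C[q]$ (using the assumption that $\omega$ is rational, so the Batyrev relations $QSR_{X,G}$ are defined over $\C[q]$) and only tensoring with $\Lambda$ at the end. A parallel check is needed to see that inverting $q$ commutes with the $\mathfrak{m}$-adic (respectively $\J$-adic) completion; this follows from the fact that $q$ acts injectively on the associated graded, which in turn is the content of $\Crit(W_{X,G})$ being finite over $\Spec \C[q,q^{-1}]$ (Proposition \ref{finite}).
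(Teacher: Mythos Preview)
Your proposal is correct and follows essentially the same approach as the paper: both argue that the isomorphism $\varphi$ identifies the filtration used to complete $QH_G(X)$ (the degree filtration, equivalently the $\mathfrak m$-adic one since $H_G(X)=\Sym(\g^\dual)$) with the $\J$-adic filtration on the Jacobian side, because $\varphi$ sends the generators $\mu_j$ to the generators $q^{\omega_j}y^{\nu_j}$ of $\J$. The paper's proof is a one-line version of this observation; your write-up adds useful care about exactness of completion over the non-Noetherian base $\Lambda$ and about commuting the localization at $q$ with completion, points the paper simply leaves implicit.
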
 

\begin{proof} It suffices to check that the degrees giving the
  filtration used to define completions on both
  $T_\omega QH_G(X)/QSR_{X,G}$ and $\Jac(W_{X,G})$ of Proposition
  agree.  The degree of each weight $\mu_i \in QH_G(X)$ is one, while
  the degree of its image $q^{\omega_i} \ti{g}_i$ in the grading of
  the Jacobian ring given by \eqref{ses2} is also one, by definition.
  Extension of the isomorphism to the completions
  $ T_\omega \widehat{QH}_G(X)/\widehat{QSR}_X^G $ and
  $ \hJac(W_{X,G})$ follows.
 \end{proof}

 We now explore the meaning of the positive part of the critical locus
 more geometrically, in terms of the moment map.

\begin{definition}  {\rm (Tropical moment map)} 
\label{tropmom}
The {\em tropical moment map} is the map obtained by taking
$q$-valuations
\[ \Psi: \ \ti{T}^\dual(\Lambda) \to \t_\R^\dual, \quad ( y_1,\ldots,
y_k) \mapsto (\on{val}_q(y_1),\ldots, \on{val}_q(y_k)) .\]
where $\t_\R^\dual$ is considered a subspace of $\ti{\g}_\R^\dual$ via
\eqref{ses2}.
\end{definition} 

\begin{definition}  \label{minfacet}  {\rm (Minimal facets)}  
Let $\lambda \in \t^\dual_\R$.   Let 
\begin{equation} \label{closest} 
I(\lambda) = \Set { i \in \{ 1, \ldots, k \} | \lan \lambda , \nu_i \ran + \omega_i =
  \inf \{ \lan \lambda , \nu_j \ran + \omega_j \in \R, \quad j = 1,\ldots,
  k } \end{equation}
denote the indices of the ``closest'' facets to $\lambda$.  More
generally, for any subspace $\lie{s} \subset \t^\dual_\R$, denote by
$I(\lambda,\lie{s})$ the set of facets minimal for $\lambda$ among
those with $ \nu_j | \lie{s} \neq 0$.
\end{definition}  

\begin{remark} For a generic point $\lambda \in \t^\dual_\R$, there
  will be a unique closest facet so $I(\lambda)$ will have order $1$.
  Each point $\lambda$ on the boundary of $\Delta_{X \qu G}$ has
  minimal facets $I(\lambda)$ equal to the set of facets of
  $\Delta_{X \qu G}$ containing $\lambda$. The same holds in a
  neighborhood of the boundary $\partial \Delta_{X \qu G}$, by continuity.  For examples of points
  $\lambda$ with more than $\dim(T)$ minimal facets, see Figure
  \ref{p1p1movie}.
\end{remark}

\begin{proposition}
\label{tropmommap}
Let $y \in \Crit(W_{X,G})$ be a critical point.  The tropical moment
map $\zeta = \Psi(y) \in \t^\dual_\R$ has the property that for any
$\lie{s}$ with $I(\zeta,\lie{s}) \neq \emptyset$, the normal vectors
$ \nu_j, j \in I(\zeta,\lie{s}) $ are linearly dependent after
restriction to $\lie{s}$ in $\t^\dual_\R$.
\end{proposition}  

\begin{proof} We take the derivative of the potential: For
  $\lambda \in \lie{s}$ and $y \in \Crit(W_{X,G})$
  \begin{equation} \label{deriv} 0 = \partial_\lambda W_{X,G}(y) =
    \sum_{j=1}^k q^{\omega_j} y^{\nu_j} \lan \nu_j, \lambda \ran
    .\end{equation}
  In particular the leading order powers of $q$ in \eqref{deriv} must
  cancel.  Thus
\[\forall \lambda, \  \sum_{j \in I(\zeta,\lie{s})} y^{\nu_j} \lan
\nu_j,\lambda \ran = 0 
\quad \text{so}  \sum_{j \in I(\zeta,\lie{s})} \on{val}_q(y^{\nu_j})
\nu_j = 0  \] 
so the vectors $\nu_j$ are dependent after restriction to the subspace $\lie{s}$.
\end{proof}

\begin{example}  \label{prodlines} 
\begin{enumerate} 
\item 
{\rm (Tropical moment map for critical locus for a product of
    projective lines)} Let $X= \C^4$ with $G = (\C^\times)^2$ acting
  with weights $(1,0),(1,0)$, $(0,1),(0,1)$. Consider the reduction at
  $\omega =(2,1)$ then $X \qu G = \P^1 \times \P^1$ with moment
  polytope $[4,0] \times [2,0]$ and normal vectors
  $(1,0),(-1,0),(0,1),(0,-1)$. The critical points are
  $(y_1,y_2) = (\pm q^2, \pm q)$ which have valuations (leading order
  $q$-powers) $(2,1)$, all mapping to the barycenter of the moment
  polytope.  We have $I(\lambda) = \{ 3,4 \}$, the facets closest to
  the critical point.  If $\lie{s} = \on{span} (1,0)$ then
  $I(\lambda,\lie{s}) = \{ 1,2 \}$.  Note that the vectors
  $\nu_j, j \in I(\lambda)$ or $I(\lambda,\lie{s})$ are dependent.
\item {\rm (Tropical moment map for critical locus for a family of
  toric surfaces)} Suppose that $X = \C^5 $ with $G = (\C^\times)^3$
  acting with weight matrix
\[\left[
\begin{array}{rrrrr} 
1&  1&  0&  0&  0 \\
0&  0&  1&  1&  0\\
-1&  0&  -1&  0&   1 \end{array} \right] .\]
For a suitable choice of $\omega$ the quotient $X \qu G$ is the
blow-up of projective lines $\P^1 \times \P^1$ at a fixed point, say
$([1,0], [1,0])$, with moment polytope
\[ \Delta_{X \qu G} = \{ (\mu_1,\mu_2) \in [0,4] \times [0,2] \ |
  \ \mu_1 + \mu_2 \ge \eps \} .\]
  For $\eps < 1$, there are two possible values of $\Psi $ on
  $\Crit(W_{X,G})$: one critical point maps to $(\eps,\eps)$, while
  four other critical points map to $(2,1)$.  For $1 < \eps$, one
  critical point maps to $(2 - \eps,\eps)$.  The others map to
  $( (3 + \eps)/2,1)$.  See Figure \ref{p1p1movie}.  The case
  $\eps = 1$ is special: in this case, one can obtain a line segment
  of critical values $\Psi(y), y \in \Crit(W_{X,G})$ by varying the
  ``bulk deformation'', see \cite{fooo:toric2}.  This is shown as a
  dotted line connecting the two critical values in the Figure
  \ref{p1p1movie}.
\end{enumerate} 
\end{example} 

\begin{figure}[ht]
\includegraphics[width=4in]{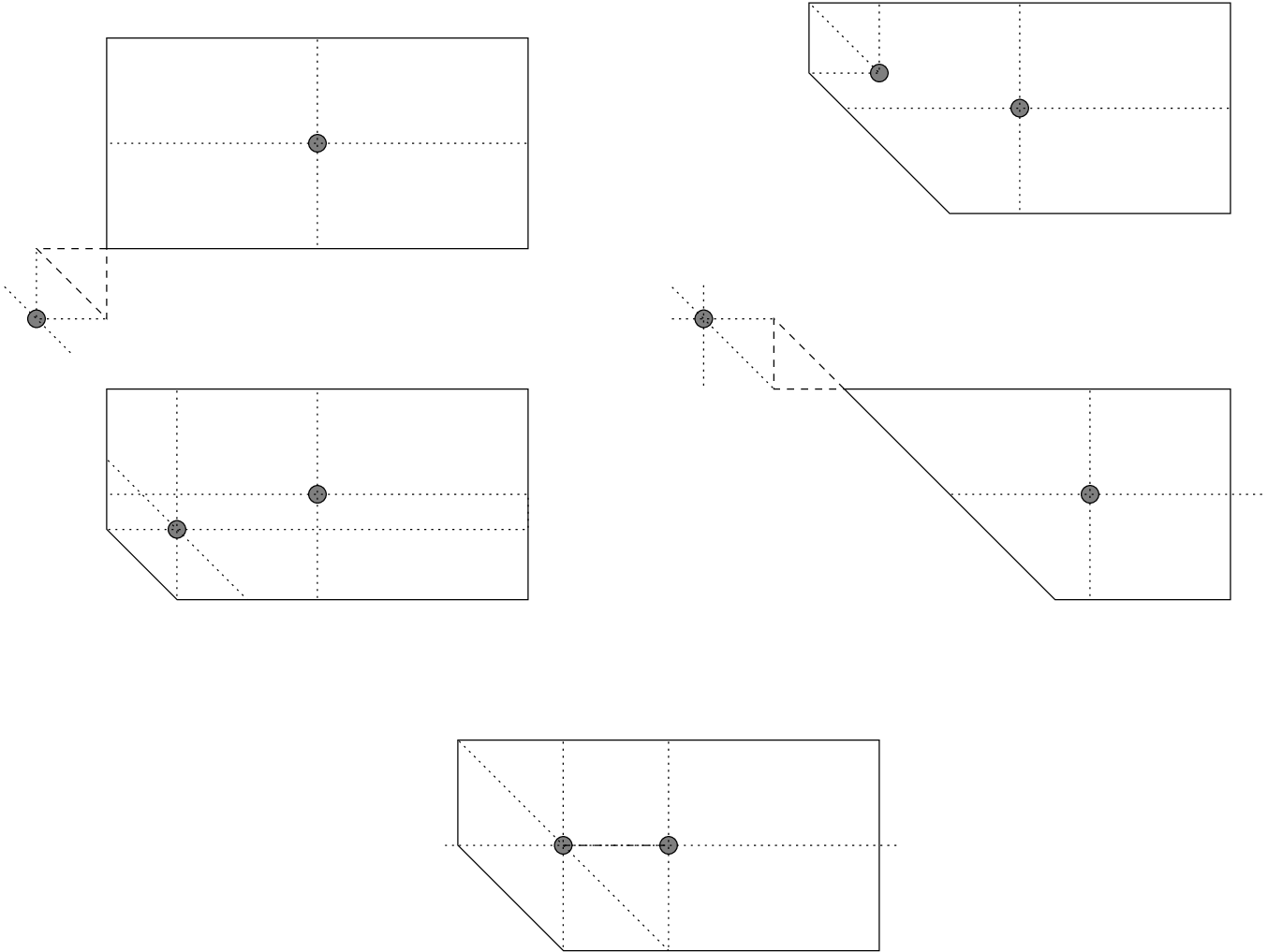}
\caption{Values of the tropical moment map on the critical locus for
  a family of toric surfaces}
\label{p1p1movie}
\end{figure}

\begin{proposition} \label{closure} A point $y \in \Crit(W_{X,G}) $
  lies in $\Crit_+(W_{X,G})$ iff $\Psi(y) \in \Delta_{X \qu G}$.
\end{proposition}

\begin{proof} It suffices to consider the case $\Gamma $ trivial.  The
  $j$-th coordinate of $y$ under the embedding
  $T^\dual \to \ti{G}^\dual$ is $y_j = y^{\nu_j}$.  The shift
  $y_j q^{\omega_j}$ has $q$-valuation
  $\lan \nu_j, \Psi(y) \ran + \omega_j$.  Thus $y_j q^{\omega_j}$ goes
  to zero as $q \to 0$ iff $\lan \nu_j, \Psi(y) \ran > - \omega_j$.
\end{proof}

\begin{remark} By the results of \cite{fooo:toric1},
  \cite{fooo:toric2}, \cite{wo:gdisk}, the image of $\Crit_+(W_{X,G})$
  in $\t_\R^\dual$ consists of moment values such that the
  corresponding Lagrangian moment fiber is Hamiltonian
  non-displaceable, see Section \ref{discuss}.
\end{remark}

\section{Dimensional equality via a toric minimal model program} 

In this section we show that the linearized quantum Kirwan map is
injective after passing to the formal completion and modding out by
the quantum Stanley-Reisner ideal.  By the surjectivity result in
Theorem \ref{weaktoric2} it suffices to show the equality of
dimensions
\begin{equation} \label{equaldims}
 \dim QH(X \qu G) = \dim
 {\Jac_+}(W_{X,G}) .\end{equation}
In the case that $X \qu G$ is Fano and minimally presented as a
quotient of $X$ by $G$ (that is, every weight space in $X$ defines a
prime divisor of $X \qu G$) this is a consequence of Kouchnirenko's
theorem, see Corollary \ref{equal}.  

To reduce to the Fano case, we apply the toric minimal model program
introduced by M. Reid \cite{reid:decom}.  More precisely, we vary the
K\"ahler class $[\omega]$ by a multiple $-t c_1(X \qu G)$ of the
canonical class $c_1(X \qu G)$ until we obtain a Fano fibration,
showing that the wall-crossings on both sides of \eqref{equaldims} are
the same.  We wish to emphasize that, although we are using the
language of toric minimal models, in fact all of our results are
completely {\em combinatorial}, that is, could be phrased entirely in
terms of fans.  However, we find the geometric story accompanying the
combinatorics rather helpful.  First, recall the general phenomenon of
wall-crossing in the context of geometric invariant theory quotients,
as in Dolgachev-Hu \cite{do:va} and Thaddeus \cite{th:fl} in which
{\em flips} occur as the polarization defining the quotient is varied.

\begin{notation} {\rm (Family of git quotients)}  
Let
$\omega_t \in H^2_G(X,\Q), t \in [0,1]_\Q$
be an affine linear path of K\"ahler classes, corresponding to a path
of rational polarizations (ample $G$-line bundles) $L_t \to X$.  For
any $t \in [0,1]_\Q$ let
\[ X^{t,\ss} = \bigcup_{k > 0 , s \in H^0(X,L_t^k)^G} \{ s \neq 0 \} \]
be the semistable locus, and assume that $G$ acts with finite
stabilizers on $X^{t,\ss}$ for $t = 0,1$.  Then the $G$ acts with
finite stabilizers on $X^{t,\ss}$ for generic $t \in [0,1]$.  For such
$t$ denote by
\[X \qu_t G :=X^{t,\ss}/G\] 
the stack-theoretic git quotient with respect to the corresponding
polarization.  The stack $X \qu_t G$ is a smooth proper
Deligne-Mumford stack with projective coarse moduli space.
\end{notation}

\begin{proposition}  {\rm (Wall-crossing for git quotients)}  
\label{with} 
With $G,X,L_t$ as above.
\begin{enumerate} 
\item {\rm (Walls)} there exists a finite collection $t_1,\ldots,t_n
  \in (0,1)$ of {\em singular values a.k.a walls} such that there
  exist semistable points that are not stable;
\item \label{chambers} {\em (Chambers)} the isomorphism class of the
  quotient $X \qu_t G$ is independent of $t$ for
  $t \in (t_j,t_{j+1}), j = 1,\ldots, n - 1$;
\item {\em (Wall-crossing)} Suppose that stable=semistable for the
  $G$-action on $\P(L_0 \oplus L_1)$.  Then as $t$ passes through a
  singular value $t_j$, the quotient $X \qu_t G$ goes through a
  stacky-weighted blow-down and blow-up over a {\em center}
  $Z \subset X \qu_{t_j} G$.
\end{enumerate} 
\end{proposition}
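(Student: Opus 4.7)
The plan is to exploit the combinatorial characterization \eqref{unstable}: $X^{t,\ss} = X - \bigcup_{I \in \cI(\omega_t)} X^I$, where $\cI(\omega_t)$ is determined by which cones $\sum_{i \in I} \R_{\geq 0} \mu_i$ contain $\omega_t$. Thus the quotient $X \qu_t G$ depends on $t$ only through the finite combinatorial invariant $\cI(\omega_t) \in 2^{2^{\{1,\ldots,k\}}}$. As $t$ varies, $\cI(\omega_t)$ changes exactly when $\omega_t$ crosses the relative boundary of one of the finitely many cones $\sum_{i \in I} \R_{\geq 0} \mu_i$, and each such boundary is a finite union of cones of strictly smaller dimension. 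Since $t \mapsto \omega_t$ is affine linear, its preimage in any proper face of any such cone is either all of $[0,1]$ or a single point; the former is excluded by the stable=semistable hypothesis at the endpoints. This yields the finite list of walls $t_1 < \cdots < t_n$ of part (a), and on each open subinterval $(t_j, t_{j+1})$ the data $\cI(\omega_t)$ is constant, so $X^{t,\ss}$ is constant as an open subscheme of $X$ and hence $X \qu_t G$ is constant as a smooth proper DM stack, proving (b).

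For (c), I would adapt the Thaddeus--Dolgachev--Hu master space construction. Set $\ti{X} := \P(L_0 \oplus L_1)$ with the induced $G$-action, and for $s \in [0,1]$ consider the polarization $\O_{\ti{X}}(1) \otimes \pi^* L_s$, where $\pi : \ti{X} \to X$ is the projection. Under the stated stable=semistable hypothesis on $\ti{X}$, at each wall $t = t_j$ the semistable-but-not-stable locus in $X$ is a disjoint union of $G$-orbits whose closures fiber over a toric suborbifold $F_j \subset X \qu_{t_j} G$ fixed by a one-parameter subgroup $\lambda_j \subset G$ (the one-parameter subgroup whose dual character is perpendicular to $\omega_{t_j}$ along the crossed face). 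A standard variation-of-GIT analysis then identifies $X \qu_{t_j - \eps} G$ as a weighted blow-up of $X \qu_{t_j} G$ along a substack $F_j^-$ with normal weights equal to the negative-$\lambda_j$-weight subspaces of the normal bundle of $F_j$ in $X$, and $X \qu_{t_j + \eps} G$ as the weighted blow-up along the complementary locus $F_j^+$. Composing these two identifications produces the asserted factorization of $X \qu_{t_j - \eps} G \dashrightarrow X \qu_{t_j + \eps} G$ as a weighted blow-down followed by a weighted blow-up.

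The hard part will be handling the orbifold structure at the wall carefully enough that the transition is genuinely an elementary toric weighted blow-up of smooth DM stacks, rather than merely a birational correspondence of coarse moduli spaces. The stable=semistable hypothesis on $\ti{X}$ is precisely what excludes coincidence of two walls or appearance of stabilizers of dimension $\geq 2$; in the toric dictionary, it amounts to requiring that the path $\omega_t$ cross each critical hyperplane in $\g^\dual$ transversely and avoid the codimension-two strata of the hyperplane arrangement cut out by the cones $\sum_{i \in I} \R_{\geq 0} \mu_i$. Granted this genericity, each transition is of the simplest Thaddeus type, and the weighted blow-up picture then follows by a direct comparison of the toric fans of $X \qu_{t_j \pm \eps} G$, whose rays differ only by the star subdivision along the ray dual to $\lambda_j$.
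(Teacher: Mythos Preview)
Your argument is correct and is essentially the standard variation-of-GIT proof from Thaddeus \cite{th:fl} (and Dolgachev--Hu \cite{do:va}), specialized to the toric case via the combinatorial description \eqref{unstable} of the semistable locus.  The paper itself does not supply a proof of this proposition: it treats the statement as known background, referring the reader to Thaddeus and, for the toric case, to Reid \cite{reid:decom} and Matsuki \cite[Chapter~14]{mats:mori}.  Your sketch is thus more detailed than what the paper provides, and your identification of the role of the stable=semistable hypothesis on $\P(L_0\oplus L_1)$ --- that it forces the path $\omega_t$ to cross each wall of the secondary fan transversely and to avoid codimension-two strata, so that only a single one-parameter subgroup is responsible for the new strictly semistable points at each $t_j$ --- is exactly the point that makes the transition an elementary weighted blow-down/blow-up rather than a more complicated birational modification.
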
 

See Figure \ref{mori} for an example of the change in moment polytopes
under such a variation; the toric case is discussed further in
\cite{reid:decom}, \cite[Chapter 14]{mats:mori}.  

\begin{remark} \label{weighted} Suppose that $X \qu_t G$ is a family
  of toric quotients and $t \in (0,\infty)$ a singular value.  The
  singularity in $X \qu_t G$ is necessarily created by an intersection
  of facets of $\Delta(X \qu_t G)$ with linearly dependent normal
  vectors $\nu_j, j \in I(t)$.  Let 
\[ Z  = \bigcap_{j \in I(t)} D_j  \subset X \qu_t G \]  
denote the intersection of the prime divisors $D_j, j \in I(t)$.
Denote the morphisms to the singular quotients on the level of coarse
moduli spaces
  \[\pi_\pm: X \qu_{t_\pm} G \to X \qu_t G .\]
  Denote the exceptional loci $E_\pm := \pi_\pm^{-1}(Z)$.  The
  restrictions of $\pi_\pm$ to the exceptional loci
\[ \pi_\pm | E_\pm: E_\pm \to Z \] 
are fiber bundles.  The fibers $\pi_\pm^{-1}(z), z \in Z$ are weighted
projective stacks $\P(I_\pm)$ corresponding to the subset of weights
\[ I_\pm \subset \{ \nu_j, j \in I(t) \} \]
corresponding to prime divisors not containing $E_\pm$.  The fan
$\cC(Z)$ of the center $Z$ is given by the projections of the cones
$\cC_Z(X \qu_t Z)$ of $\cC(X \qu G)$ corresponding to orbits meeting
$Z$ to the Lie algebra $\g' = \g/\on{span}(\nu_1,\ldots,\nu_k)$. The
morphism of toric varieties $\pi_\pm$ corresponds to a morphism of
fans 
\[ \cC(X \qu_{t_\pm} G) \to \cC(X \qu_t G) \] 
that is an isomorphism over the complement of the cones in
$\cC( X \qu_t G)$ containing $C(Z) \in C(X \qu_t G)$.
\end{remark}

We will be particularly interested in the variation of git quotient
for toric quotients corresponding to the anti-canonical class.

\begin{notation} {\rm (Anticanonical variations)} A path
  $\omega_t \in H^2_G(X), t \in [0,T]$ is an {\em anticanonical
    variation of symplectic class} if $ \ddt \omega_t = - c_1^G(TX) $,
  see Figure \ref{mori}.  Note that since $TX$ is a sum of line
  bundles with $\ti{G}$-weights $\eps_i$,
\[  c_1^{\ti{G}}(TX) = \sum_{i=1}^k \eps_i .\]  
The variation of symplectic class $\omega$ can be taken to be
\[ (\omega - t c_1^G(TX))_i = \omega_i - t, \quad i = 1,\ldots,k .\]
Hence the facets of the polytope $\Delta_{X \qu_t G}$ ``vary at the
same rate'':
\[ \Delta_{X \qu_t G} = \{ \mu \in \t_\R^\dual \ | \ \lan \mu, \nu_j 
\ran \ge - \omega_j + t, j = 1,\ldots, k \} .\]
\end{notation} 

\begin{figure}[ht]
\includegraphics[height=1.3in]{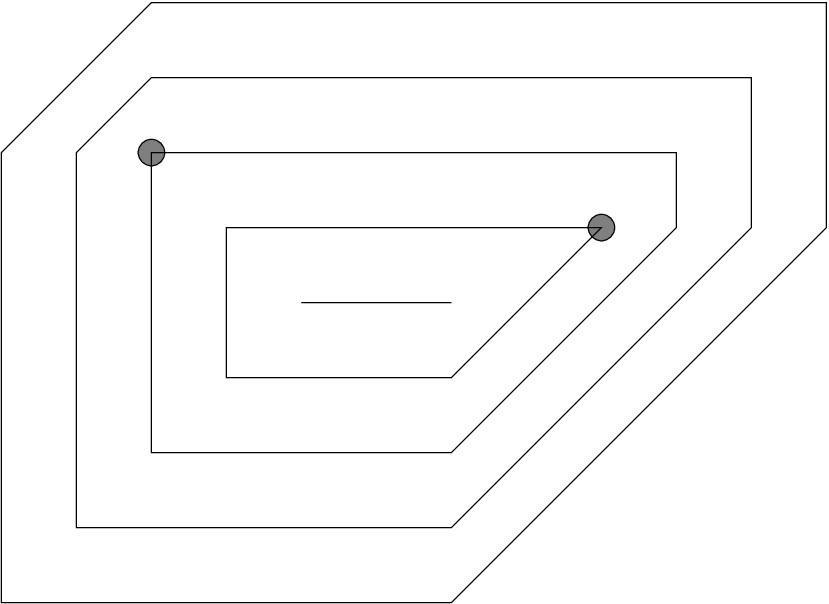}
\caption{Polytopes for a toric minimal model program}
\label{mori} 
\end{figure}

The sequence of a toric varieties obtained in this way is a special
case of the minimal model program described in the toric case by Reid
\cite{reid:decom}; see \cite[Chapter 15]{cox:toric} for more
references.  An example of the family of polytopes
$\Delta_{X \qu_t G}$ obtained in this way is shown in Figure
\ref{mori}; the corresponding fans $\cC(X \qu_t G)$ are shown in
Figure \ref{morifans}. 

\begin{figure}[ht]
\includegraphics[height=1.2in]{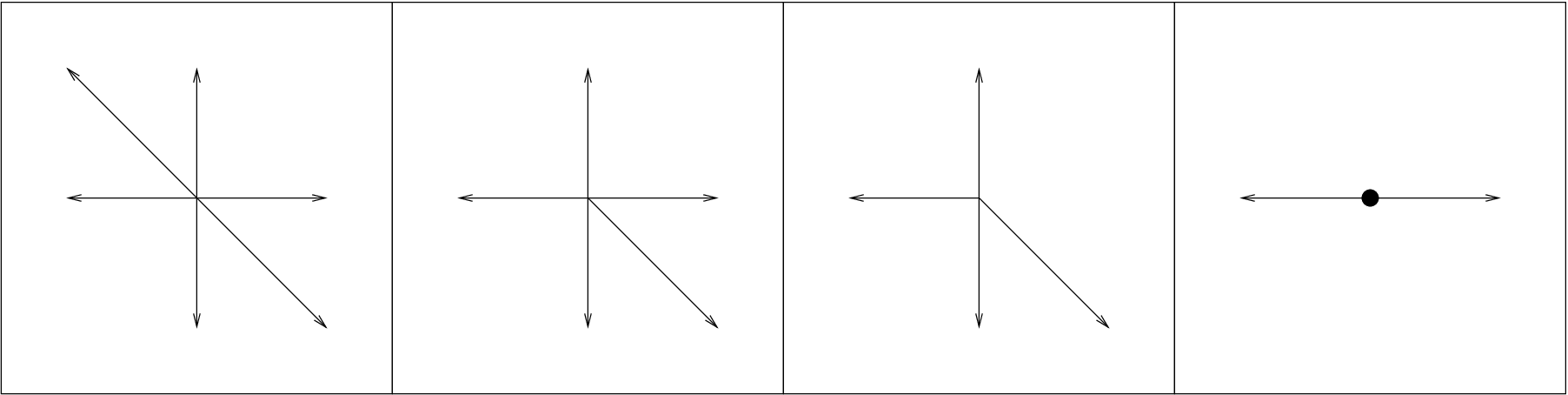}
\caption{Fans for a toric minimal model program}
\label{morifans} 
\end{figure}

The following is the key property of generic runnings of the toric
minimal model program.

\begin{proposition} \label{generic} For a generic symplectic class
  $\omega$, at any singular value $t = t_1,\ldots, t_n $, given a
  collection of normal vectors $\nu_j, j \in I(t)$ that span a
  subspace of dimension
  \[ \dim(\on{span}\{ n_j, j \in (t) \})  = \ell \] 
  the following holds: any point $\mu$ in $\t^\dual$ satisfies at most
  $\ell + 1$ inequalities
  $\lan \mu, \nu_j \ran \ge \omega_j + t, j \in I(t)$ with strict
  equality.
\end{proposition} 

\begin{proof} For each subset $I$ such that the vectors
  $\nu_j, j \in I$ span a subspace of $\t_\R$ of size $l$, the space
  $\sS_I$ of tuples $(\omega,\lambda,t)$ where such that at least
  $l + 2$ equalities \eqref{closest} hold is a union of affine
  subspaces of dimension at most $\dim(H^2_G(X)) - 1$.  The projection
  $p(\sS_I)$ of $\sS_I$ under
  $H^2_G(X) \times \t_\R^\dual \times \R \to H^2_G(X)$ is a proper
  affine subspace of codimension at least $1$.  Taking the union over
  all possible subsets $I \subset \{ 1, \ldots, k \}$ proves the
  claim.
\end{proof}

We will need the following explicit description of the flips arising 
from variation in the anticanonical direction.  

\begin{definition} {\rm (Flipping simplex)}
  \label{flip} 
  Suppose that $\omega_t$ is a path of classes as above with
  $\ddt \omega_t = - c_1^G(X)$.  Let $t \in (0,\infty)$ be a singular
  value, corresponding to an intersection
  $\cap_{j \in I(t)} F_{j,t} \subset \Delta_{X \qu_t G}$ of facets
  $F_{j,t}$ with indices $j \in I(t)$:
\[ F_{j,t} = \Set{ \mu \in \Delta_{X \qu_t G} | \lan \mu, \nu_j \ran =
  - \omega_j + t } .\]
  Let $\Sigma_t$ denote the {\em flipping simplex}
  \begin{equation} \label{fsim} \Sigma_t := \on{hull}( \nu_j , j \in
    I(t)) \subset \t_\R \end{equation}
  that is the convex hull of normal vectors $\nu_j$ corresponding to
  the facets $F_{j,t}$.
\end{definition}

\begin{lemma} \label{stacky} For each singular time
  $t = t_1,\ldots, t_n $, the leading order term potential
  $W_t(y) = \sum_{j \in I(t)} y_j, $ has only non-degenerate critical
  points.
\end{lemma} 

\begin{proof} We may suppose $\Gamma = \{ 1 \}$ and
  $I = \{ 1,\ldots, \ell + 1\}$.  By the linear dependence assumption,
  there exist $c_1,\ldots, c_\ell \in \R$ such that
  $\nu_{\ell+1} = c_1 \nu_1 + \dots + c_\ell \nu_\ell$.  The equations
  defining the critical locus $\Crit(W_{X,G})$ are the partial
  derivatives with respect to the local coordinates
  $y^{\nu_1},\ldots, y^{\nu_\ell}$
  \begin{equation} \label{tcut} y^{\nu_i} = c_i y^{\nu_{\ell+1}}, \quad i
    =1 ,\ldots, \ell . \end{equation}
 These are solutions to the single equation
\[z^{c_1 + \ldots + c_\ell + 1} = 1, \quad z= c_i y^{\nu_{\ell+1}}, \ i =
1,\ldots, \ell \] 
and as such are transversally cut out. 
\end{proof}

\begin{proposition} [Explicit description of flips for the toric minimal model program]
\label{explicit}
Let $X \qu_t G$ be as above so that the condition of Proposition
\ref{generic} is satisfied.  For each singular value $t$, one of the
two possibilities holds:
\begin{enumerate} 
\item {\rm (Fibration case)} Suppose that the flipping simplex
  $ \Sigma_t$ of \eqref{fsim} contains the origin $0$.  Then
  $X \qu_t G$ undergoes a {\em Mori fibration} with fiber a Fano toric
  stack $(X \qu_t G)'$ over a toric stack $(X \qu_t G)''$ of lower
  dimension, and the symplectic class on the fiber is a multiple of
  the first Chern class $c_1( ( X \qu_t G)') $.
\item \label{flipordiv} {\rm (Flip or divisorial contraction case)}
  Suppose that the flipping simplex $\Sigma_t $ does not contain $0$.
  Then $X \qu_t G$ undergoes a {\em flip} (resp. {\em divisorial
    contraction}).  That is, $X \qu_t G$ undergoes a stacky-weighted
  blow-down followed by stacky-weighted blow-up (resp. stacky-weighted
  blow-down only) over a center $Z \subset X \qu_{t} G$. 
\end{enumerate} 
\end{proposition}

\begin{proof} 
  The fibration case is straight-forward and left to the reader.  For
  the flip/divisorial contraction case, see \cite[Lemma
  15.3.11]{cox:toric}, \cite[Proof of Proposition 14-2-11]{mats:mori}
  and especially \cite[Figure 14-2-12]{mats:mori}.  One can give a
  proof using variation of git as follows: Let $G \times \C^\times$
  act on $\ti{X} = X \times \C = \C^{k+1}$ with weights
  $\ti{\mu}_j = (\mu_j, 1)$ and $(0,1)$ and polarization vector
  $\ti{\omega} = (\omega,0)$.  The ``master space'' given by the
  quotient $\ti{X} \qu G$ has a residual $\C^\times$-action whose
  quotients are the git quotients $X \qu_t G$.  The transition times
  correspond to the fixed point components
  $Z \subset (\ti{X} \qu G)^{\C^\times}$; each is necessarily a
  subvariety of $X \qu_t G$ obtained by first restricting to the locus
  $X^\chi$ where $\C^\times$ acts by a character $\chi$ of $G$ and
  taking the git quotient.  The normal bundle $N$ to
  $(\ti{X} \qu G)^{\C^\times}$ splits into the sum of negative
  resp. positive weight sub-bundles $N_-$ resp. $N_+$, each of which
  is quotient of the sum of negative resp. positive weights in
  $X/X^\chi$ under the action of $\C^\times$.  The weighted blow-down
  and blow-up involved from passing to $X \qu_{t_-} \C^\times $ to
  $X \qu_{t_+} \C^\times $ replaces the projectivization $\P_-$ of the
  sum of the weight bundles $N_-$ with the projectivization $\P_+$ of
  the sum of the weight bundles $N_+$.  The claim follows.
\end{proof} 

An example is shown in Figure \ref{morifans2}, continuing that in
Figures \ref{mori}, \ref{morifans}, where the flipping simplices for
each step are shaded.

\begin{figure}[ht]
\includegraphics[height=1.2in]{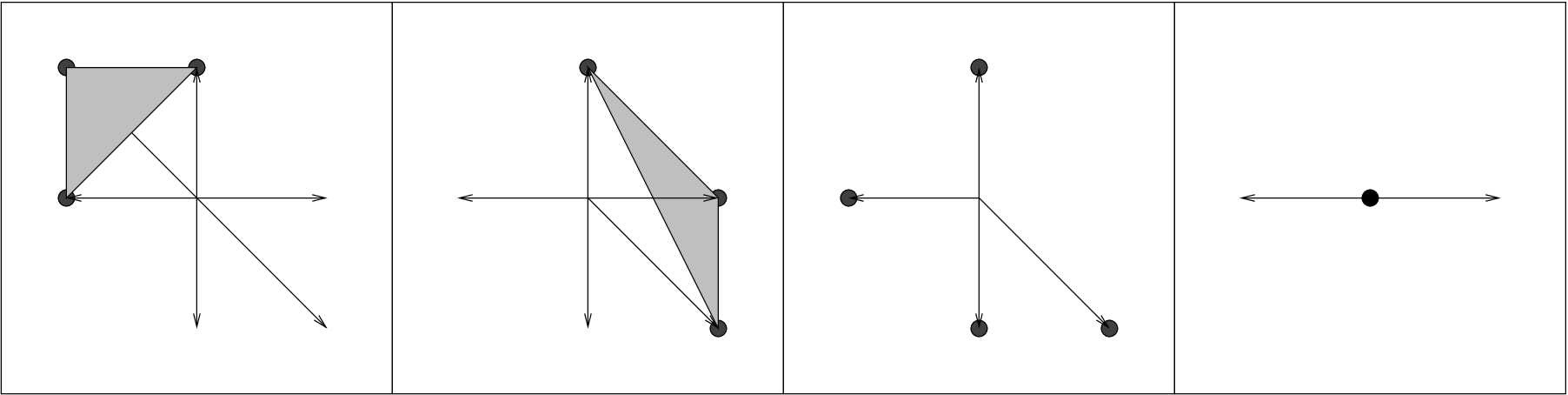}
\caption{Flipping simplices for a toric minimal model program}
\label{morifans2} 
\end{figure}

We now describe the change in the critical locus of the potential as
the toric stack undergoes an mmp transition.  In the fibration case
the fan $\cC( X \qu_{t_-} G)$ admits a morphism to the fan
$\cC( (X \qu_t G)')$ of the base, that is, each cone for
$X \qu_{t_-} G$ maps to a cone, possibly of lower dimension, of
$(X \qu_t G)'$; and the cones of $\cC( X \qu_{t_-} G)$ that map to the
origin form the fan of the fiber $(X \qu_t G)''$.
  \label{stackychange} By Proposition \ref{generic}, the fan
  $\cC( (X \qu_t G)')$ for the fiber $(X \qu_t G)'$ has a minimal
  number of generators $\nu_j$.  That is, the corresponding polytope
  is a simplex $\Delta( (X \qu_t G)')$ and the corresponding toric
  stack $(X \qu_t G)'$ is a stacky weighted projective space.

\begin{lemma} \label{Wfiber} {\rm (Critical loci of the
    Landau-Ginzburg potentials for fibrations)} Let $X,G, \omega$ be
  as above.  Suppose that $t \in (0,\infty)$ is a singular value so
  that $X \qu_t G$ with potential $W_{X \qu_t G}$ undergoes a
  fibration over a toric variety $ (X \qu_t G)'$ of lower dimension
  with fiber $(X \qu_t G)''$, with potentials $W_{(X \qu_t G)'}$ and
  $W_{(X \qu_t G)''}$.  Then there is a canonical bijection
\[ \Crit_+(W_{X \qu_{t_-} G}) \to \Crit_+(W_{ (X \qu_t G)'}) \times
  \Crit_+(W_{(X \qu_t G)''}) .\]
\end{lemma}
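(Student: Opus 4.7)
The plan is to use the toric fibration structure from Proposition \ref{explicit}(a) to split the potential $W_{X \qu_{t_-} G}$ into base and fiber parts, and then match critical loci via an implicit function theorem argument.

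By Proposition \ref{explicit}(a), the rays of the fan of $X \qu_{t_-} G$ partition into \emph{fiber rays} lying in the sublattice $\t''_\Z$ (spanning the fan of the monotone toric orbifold $(X \qu_t G)''$) and \emph{horizontal rays} in bijection, via projection $\t_\R \to \t'_\R$, with the rays of the base fan of $(X \qu_t G)'$. Choosing a splitting $\t^\dual_\R \cong \t'^\dual_\R \oplus \t''^\dual_\R$ over $\R$, I would write $y = (y', y'') \in T'^\dual(\Lambda_0) \times T''^\dual(\Lambda_0)$ and $\nu_j = (\nu_j', \nu_j'')$, with $\nu_j' = 0$ for fiber rays. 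Monotonicity of the fiber forces all support constants $\omega_j$ for fiber rays to equal a common value $s_F$, yielding
\begin{equation*}
W_{X \qu_{t_-} G}(y', y'') \;=\; q^{s_F} W_{(X \qu_t G)''}(y'') \;+\; \sum_{j \text{ horizontal}} q^{\omega_j} (y')^{\nu_j'} (y'')^{\nu_j''}.
\end{equation*}
Since horizontal rays biject with base rays, the horizontal sum, evaluated at any fiber critical point $y''_0$ with $\Psi(y''_0) = 0$ in $\t''^\dual_\R$, coincides with $W_{(X \qu_t G)'}(y')$ after absorbing the multiplicative constants $(y''_0)^{\nu_j''}$ into a rescaling of $y'$.

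The critical equations split along $\t^\dual = \t'^\dual \oplus \t''^\dual$. For $\lambda'' \in \t''$ only the $\nu_j''$-components contribute; for $y \in \Crit_+$ the horizontal terms have strictly higher $q$-order than the fiber terms (since $\Psi'(y')$ lies in the interior of the base polytope, so $q^{\omega_j}(y')^{\nu_j'} \to 0$, while the fiber terms all share the common $q$-order $s_F$), so at leading order the $\t''$-equation reduces to the critical equation of $W_{(X \qu_t G)''}$. Since $(X \qu_t G)''$ is monotone, its critical points are isolated and nondegenerate, and an implicit function theorem argument over $\Lambda_0$ solves uniquely for $y''$ as a function of $(y', q)$ near each fiber critical point $y''_0$. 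Substituting back, the $\t'$-equations reduce at leading order to the critical equations of $W_{(X \qu_t G)'}$, establishing the bijection $\Crit_+(W_{X \qu_{t_-} G}) \leftrightarrow \Crit_+(W_{(X \qu_t G)'}) \times \Crit_+(W_{(X \qu_t G)''})$.

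Multiplicities are preserved because, in the coordinates $(y', y'')$, the Hessian of $W_{X \qu_{t_-} G}$ at a critical point is block-diagonal at leading order in $q$ with diagonal blocks equal to the Hessians of $W_{(X \qu_t G)'}$ and $W_{(X \qu_t G)''}$, so the intersection multiplicity on the left is the product of those on the right in the sense of Kouchnirenko's theorem (Theorem \ref{kouch}). The chief obstacle I anticipate is making the ``strictly higher order'' claim rigorous in the twisted fibration case where horizontal rays have $\nu_j'' \neq 0$: then the $q$-order of a horizontal term has an additional contribution $\lan \Psi''(y''), \nu_j'' \ran$ which must be compared carefully with $s_F$. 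This should be handled by exploiting that critical points in $\Crit_+$ force $\Psi''(y'')$ to approach the monotone center $0$ as $q \to 0$ by monotonicity of the fiber, possibly via an iterative refinement argument in which the leading-order solution is improved order by order in $q$.
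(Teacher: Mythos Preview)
Your proposal takes essentially the same approach as the paper: split the potential into fiber and base (``horizontal'') parts using the exact sequence of dual tori induced by the fibration, observe that the fiber terms are the leading-order terms for points mapping to the interior of the moment polytope (since the fiber is about to collapse and its facets are equidistant), and then use non-degeneracy of the monotone fiber potential to invoke an implicit-function step reducing the remaining equations to those of the base. The paper's proof is terser and organizes the two reductions in the opposite order---first noting that the fiber part $W''$ is constant on $T'^{\dual}$ so that the $T'^{\dual}$-critical equations cut out $\Crit_+(W_{(X\qu_t G)'})$, then identifying the fiber of the projection with $\Crit_+(W_{(X\qu_t G)''})$ via non-degeneracy---but the content is the same.

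One point worth noting: the subtlety you flag about twisted fibrations (horizontal rays with $\nu_j'' \neq 0$) is real, and the paper's proof glosses over it just as you anticipate. Your proposed resolution---that $\Psi''(y'')$ tends to the monotone center so the extra contribution $\langle \Psi''(y''), \nu_j'' \rangle$ is negligible---is the right idea, and the iterative refinement you suggest is exactly how one makes it rigorous over $\Lambda_0$; this is the same mechanism used later in the paper in Theorem~\ref{crit} (citing the implicit function theorem of Fukaya et al.\ \cite[Theorem~10.4]{fooo:toric1}). So your proof is, if anything, slightly more careful than the paper's on this point.
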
 

\begin{proof} We may assume that $\Gamma = 1$, by treating the
  components of $\ti{T}^\dual$ individually.  The fibration of
  $X \qu_{t_-} G$ induces a fibration of tori and dual tori,
\[ 1 \to T'' \to T \to T' \to 1, \quad 1 \to T^{\dual,'}
\stackrel{p}{\to} T^\dual \stackrel{r}{\to} T^{\dual,''} \to 1 \]
for which we may choose a splitting.  We write
\[ W_{X \qu_{t_-} G} = W_{X \qu_{t_-} G}' + W_{X \qu_{t_-} G}'' \]
where 
\[ W_{X \qu_{t_-} G}'' = 
  \sum_{F_j \supset \Delta(X \qu_{t_-} G)}  q^{\omega_j} y^{\nu_j} \]
  is the sum of terms corresponding to hyperplanes containing the
  polytope at the singular time, that is, the hyperplanes describing
  the fiber, and $W_{X \qu_{t_-} G}'$ is the sum of the remaining
  terms of $W_{X,G}$, corresponding to facets of the base.  The
  leading order terms in $W_{X \qu_{t_-} G}$ for
  $y \in \Delta_{X \qu_{t_-} G}$ are $W_{X \qu_{t_-} G}''$, which
  restricts to a constant on $T^{\dual,'} \subset T^\dual$.  Thus
  $ \Crit_+(W_{X \qu_{t_-} G})$ maps to $\Crit_+(W_{(X \qu_t G)'})$.
  The fiber of the projection of $\Crit_+(W_{X \qu_{t_-} G})$ onto
  $\Crit_+(W_{(X \qu_t G)'})$ may be identified with
  $\Crit_+(W_{(X \qu_t G)''})$, since $W_{(X \qu_t G)}''$ are the
  non-degenerate terms of leading order, by Proposition
  \ref{stagewise} below.
\end{proof}

\begin{remark} 
  The fibration exact sequence Lemma \ref{Wfiber} only holds for
  critical points lying over the interior of the moment polytope.  In
  other words, there is no fibration of $\Crit(W_{X \qu_{t_-} G})$
  similar to that stated in Lemma \ref{Wfiber}.  Indeed suppose that
  $X \qu G$ is a Hirzebruch surface
  $\P(\mO_{\P^1}(n) \oplus \mO_{\P^1})$ for some $n \ge 0$.  Then
  $ \Crit(W_{X \qu_{t_-} G}) $ has order $2+ 2n$ for $n \ge 2$ (twice
  the volume of the polytope with vertices $(-1,0),(0,-1),(1,n)$) but
  $\Crit(W_{ (X \qu_t G)'}) \times \Crit(W_{(X \qu_t G)''}) $ has
  order $4$.
\end{remark} 

\begin{lemma} \label{fibration} {\rm (Dimension lemma for fibrations)}
  Suppose that $X \qu_t G$ as above with generic initial symplectic
  class $\omega_0$.  Let $t \in (0,\infty)$ be a singular value so
  that $X \qu_t G$ undergoes a fibration over a toric stack
  $ (X \qu_t G)'$ of lower dimension with fiber $(X \qu_t G)''$.  Then
\begin{enumerate}
\item $\dim( QH( X \qu_{t_-} G) ) = \dim( QH( (X \qu_t G)')) \dim(
  QH( (X \qu_t G)'')) $ and
\item $\dim( \hJac( W_{X \qu_{t_-} G} ) = \dim( \hJac( W_{(X \qu_t G)'}))
  \dim( \hJac( W_{(X \qu_t G)'')}) $.
\end{enumerate} 
\end{lemma}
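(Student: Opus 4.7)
Part (b) is essentially immediate from the preceding Lemma~\ref{Wfiber}. That lemma produces an order-preserving bijection
\[
\Crit_+(W_{X \qu_{t_-} G}) \;\longleftrightarrow\; \Crit_+(W_{(X \qu_t G)'}) \times \Crit_+(W_{(X \qu_t G)''}).
\]
By the definition of $\hJac$ together with Proposition~\ref{finite}, one has $\dim \hJac(W) = \#\Crit_+(W)$, the order being counted with intersection multiplicity at each critical point. Because the bijection from Lemma~\ref{Wfiber} respects the local structure (coming from the factorization $W_{X \qu_{t_-} G} = W'_{X \qu_{t_-} G} + W''_{X \qu_{t_-} G}$ and the splitting $T^\dual = T^{\dual,'} \times T^{\dual,''}$ obtained from the exact sequence of tori), the multiplicities multiply, and taking dimensions gives the product formula.

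For part (a), the plan is to show that the orbifold cohomology of the total space of the toric fibration factors as a tensor product of the orbifold cohomologies of base and fiber, as a graded vector space. The input is the combinatorial description supplied by Proposition~\ref{explicit}(a): the fan $\cC(X \qu_{t_-} G)$ admits a morphism to the fan $\cC((X \qu_t G)')$ of the base, and the cones mapping to the origin constitute the fan $\cC((X \qu_t G)'')$ of the fiber. Concretely, the splitting of tori (dually, the split exact sequence of cocharacter lattices) that was used in the proof of Lemma~\ref{Wfiber} induces a bijection of maximal cones
\[
\Sigma_{\max}(X \qu_{t_-} G) \;\longleftrightarrow\; \Sigma_{\max}((X \qu_t G)') \times \Sigma_{\max}((X \qu_t G)''),
\]
each maximal cone of the total space being the sum of a maximal cone of the base (lifted via the splitting) and a maximal cone of the fiber.

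The dimension of the orbifold cohomology $QH(Y)$ of a proper toric Deligne--Mumford stack $Y$ equals the total number of box elements of its stacky fan, by the description of Borisov--Chen--Smith; equivalently, it is the Euler characteristic of the inertia stack $I_Y$. Using the lattice splitting, a box element for $X \qu_{t_-} G$ decomposes uniquely as a sum of a box element for $(X \qu_t G)''$ and the lift of a box element for $(X \qu_t G)'$, and each such box element lies in a maximal cone of the total space iff its components lie in maximal cones of base and fiber respectively. This gives $\#\text{Box}(X \qu_{t_-} G) = \#\text{Box}((X \qu_t G)') \cdot \#\text{Box}((X \qu_t G)'')$, which is the claim.

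The main obstacle is the combinatorial bookkeeping in the orbifold setting: one must verify that the splitting of cocharacter lattices coming from the fibration is compatible with the stacky structure (so that box elements really do correspond bijectively to pairs), and that maximal cones factor cleanly rather than only up to some identification. Given the explicit toric fibration description in Proposition~\ref{explicit}(a)---where the kernel of the map of fans is exactly the fiber fan---this compatibility is in fact forced, but it requires writing out the short exact sequence of stacky fans carefully. Once this is established, parts (a) and (b) match: the product structure on critical loci on the potential side mirrors the tensor product structure on the cohomology side.
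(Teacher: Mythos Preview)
Your proposal is correct and follows essentially the same approach as the paper: part (b) is deduced directly from Lemma~\ref{Wfiber}, and part (a) rests on the factorization of maximal cones coming from Proposition~\ref{explicit}(a). The only cosmetic difference is bookkeeping for (a): the paper observes that since maximal-dimensional cones of $X \qu_{t_-} G$ are products of maximal cones of base and fiber, the sum of their (lattice) volumes factors as a product, which computes $\dim QH$ directly; you instead count box elements, which amounts to the same thing but requires the extra stacky compatibility check you flagged (the short exact sequence of cocharacter lattices splits because the quotient is free, so this goes through).
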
 

\begin{proof} (a) 
By Proposition \ref{explicit}, the cones of $X \qu_{t_-} G$ of maximal
dimension are products of the maximal dimensional cones of $(X \qu_t
G)'$ and $(X \qu_t G)''$.  It follows the sum of the volumes of the
maximal dimensional cones of $X \qu_{t_-} G$ is the product of the
corresponding sums for $(X \qu_t G)'$ and $(X \qu_t G)''$.
The conclusion follows from Lemma \ref{maxcones}.   (b)
follows from Lemma \ref{Wfiber}.
\end{proof} 

In order to deal with flips with non-trivial centers with we describe
a stage-wise implicit function theorem due that was communicated to us
by S. Venugopalan. 

\begin{definition} 
  A function $W: \ti{T}^\dual(\Lambda) \to \Lambda$ has a {\em
    stage-wise non-degenerate critical point}
  $y \in \ti{T}^\dual(\Lambda)$ with exponents
  $\tau_1 < \ldots < \tau_n \in \R$ if the following holds: There exist decompositions
\begin{equation*}
  \ti{T}^\dual(\Lambda) =\ti{T}_1^\dual(\Lambda) \times \dots \times
  \ti{T}_n^\dual(\Lambda),
\quad 
  W=W_1+\dots + W_n , \quad 
   y =(y_1,\dots,y_n) 
\end{equation*}
where $T_i^\dual(\Lambda)$ are products of tori with finite groups
$\Gamma_i$,  and 
\begin{enumerate}
\item the map $W_i: \ti{T}^\dual(\Lambda) \to \Lam_{\geq 0} $ factors
  through the projection onto the first $i$ factors
\[ \pi_i: \ti{T}^\dual(\Lambda) \to \Pi_{j=1}^i
\ti{T}^\dual_j(\Lambda), \]
\item the leading order term of $W_i$ has $q$-valuation $\tau_i$, and
\item for $i \ge 2$, the point $y_i \in \ti{T}_i^\dual(\Lambda)$ is a
  non-degenerate critical point of
\[ \ti{W}_i:   := W_i |_{\ti{T}_i^\dual(\Lambda)} :\ti{T}_i^\dual(\Lambda) \to \Lam
.\] 
\end{enumerate}
This ends the Definition.
\end{definition}

\begin{proposition} \label{stagewise} {\rm(Stage-wise implicit
    function theorem)} Suppose $W: \ti{T}^\dual(\Lambda) \to \Lam$ is
  a potential function such that every critical point
  $y \in \Crit_+(W)$ is stage-wise non-degenerate.  For each
  decomposition as above $\ti{T}^\dual =\Pi_{i=1}^n \ti{T}^\dual_i$
  there exists a bijection
 \begin{equation} \label{inject} \prod_{i=1}^n \Crit_+(\ti{W}_i) 
\to \Crit_+(W)  \end{equation}
for sufficiently small $\tau_1,\ldots, \tau_n$.
\end{proposition}
\begin{proof} 
  Given a critical point for the potentials in each direction, wesolve
  for a critical point of the full potential order by order, using
  non-degeneracy of the Hessians.  Let
  $y = (y_1,\ldots,y_n) \in \prod_{i=1}^n \Crit_+(\ti{W}_i) $ be a
  critical point of the stage-wise leading order terms.  Suppose that
  the summands of
  \begin{equation*}
    D_yW=\sum_{i=1}^n D_yW_i
  \end{equation*}
  have leading order terms divisible by
  $q^{\tau_1+\delta}, \dots,q^{\tau_n+\delta}$ for some $\delta>0$.
  We solve by taking a Taylor expansion of $D_{y\exp(z)} W$ at $y$
  \begin{eqnarray*}\label{eq:solve}
    0 & =& D_{y\exp(z)}W  \\&=&  D_yW + \frac{1}{2!} D_y^2W(z,\cdot) + \frac 1 {3!} D_y^3W(z,z,\cdot)+ \text{ higher order in } z.
  \end{eqnarray*}
  We work out the details in case $n=2$. The proof extends naturally
  to higher values of $n$.  We claim that we can solve for $z$
  satisfying 
\[ D_yW + \hh D_y^2W(z,\cdot)=0 .\]  
The operator
\[ \hh D^2_yW:T_y \ti{T}^\dual(\Lambda) \times T_y \ti{T}^\dual(\Lambda)
  \to \Lam \]  
  after a choice of basis respecting the splitting, has a block matrix
  representation
  \begin{equation*}
    \hh D^2_yW=\begin{pmatrix}
      q^{\tau_1}G_{11}& q^{\tau_2}G_{12}\\
      0&q^{\tau_2}G_{22}
    \end{pmatrix}
  \end{equation*}
  where $G_{ij}$ is a $\Lam_{\geq 0}$-valued matrix. By non-degeneracy
  $G_{11}$ and $G_{22}$ are invertible. The matrix $G$ has an inverse
  $G^{-1}:=
  \begin{pmatrix}
    q^{-\tau_1}G^{11}& q^{-\tau_1}G^{12}\\
    0&q^{\tau_2}G^{22}
  \end{pmatrix}
  $, where for any $i$, $j$, $G^{ij}$ is a $\Lam_{\geq 0}$-valued
  matrix.  In matrix notation, the solution $z$ is given by
 \begin{equation*}
    z=((D_yW)G^{-1})^t.
  \end{equation*}
  Since the leading order terms in $D_yW$ are divisible by
  $\tau_1+\delta$, $\tau_2+\delta$, the leading order terms of $z$
  have $q$-valuations at least $\delta$.  The equation
  \eqref{eq:solve} fails to hold only because of the terms quadratic
  or higher order in $z$.  Since the splitting $W=(W_1,W_2)$ is
  divisible by $(q^{\tau_1},q^{\tau_2})$ and $z$ is divisible by
  $q^{\delta}$, the term $\frac 1 {3!} D^3_y(z,z,\cdot)$ and higher
  order terms in \eqref{eq:solve} are divisible by
  $(q^{\tau_1+2\delta},q^{\tau_2+2\delta})$.  Replacing $y$ with
  $y \exp(z)$ and continuing by induction one obtains a solution to
  all orders.  Conversely, given a stage-wise non-degenerate critical
  point $y \in \Crit(W)$ we obtain a critical point $y_i$ for each
  $\ti{W}_i$ by projection.  Since the $q$-valuation of the critical
  point $y(q)$ is the minimum of the $q$-valuations of the elements
  $y_i(q)$, the element $y(q)$ has a limit as $q \to 0$ iff the
  elements $y_i(q)$ also have a limit as $q \to 0$.
\end{proof}

\begin{lemma} \label{stacky2} For generic $[\omega] \in H^2_G(X,\Q)$, every critical
  point $y \in \Crit(W_t)$ for $t \in \R$ is stage-wise
  non-degenerate.
\end{lemma}

\begin{proof} The proof is an application of Proposition \ref{generic}
  and Lemma \ref{stacky}.  For any $\eps \in \R$ let $I(\mu,\eps)$
  denote the set of indices of facets at distance $\eps$ from $\mu$:
  \[ I(\mu,\eps) := \{ i \in \{ 1, \ldots, k \} \ | \ \lan \mu , \nu_i
  \ran + \omega_i = \eps \} .\]
  The dimension count in Proposition \ref{generic} shows that for
  generic $\omega_i$
\[ \# I(\mu, \eps) \leq 1 + \dim( \on{span}(\nu_i), i \in I(\mu,\eps)
)  .\]
Let $\eps = \eps_1$ be the minimum value for which the vectors
$\nu_i, i \in I(\mu,\eps)$ is linearly dependent.  The sum of the
terms $\sum_{i \in I(\mu,\eps)} y^{\nu_i}$ has non-degenerate critical
locus by Lemma \ref{stacky}.  Taking the quotient of $\t^\dual$ by the
span of $\nu_i, i \in I(\mu,\eps)$ and repeating the computation for
the remaining stages implies stage-wise non-degeneracy.
\end{proof}

\begin{lemma} \label{cross} {\rm (Wall-crossing for dimensions)} Let
  $X,G, \omega_t$ be as above.  In the case that $X \qu_t G$ undergoes
  a flip with center $Z$ at a singular value $t \in (0,\infty)$, with
  $t_\pm = t \pm \eps$ for $\eps$ small and $W_Z$ the Landau-Ginzburg
  potential of $Z$, we have
\begin{eqnarray*} 
\label{itema}
  \dim (QH( X \qu_{t_+} G)) - \dim (QH (X \qu_{t_-} G)) &=& \dim(\Sigma_t)!
  \Vol(\Sigma_t) \dim(QH(Z)); \\ 
\label{itemb}
  \dim(\hJac( W_{X,G,t_+})) - \dim(\hJac( W_{X,G,t_-})) &=& \dim(\Sigma_t)!
  \Vol(\Sigma_t) \dim(\Jac_+(W_Z)). \end{eqnarray*}
\end{lemma}

\begin{figure}[ht]
\begin{picture}(0,0)%
\includegraphics{eulerwall.pstex}%
\end{picture}%
\setlength{\unitlength}{3947sp}%
\begingroup\makeatletter\ifx\SetFigFont\undefined%
\gdef\SetFigFont#1#2#3#4#5{%
  \reset@font\fontsize{#1}{#2pt}%
  \fontfamily{#3}\fontseries{#4}\fontshape{#5}%
  \selectfont}%
\fi\endgroup%
\begin{picture}(4666,4276)(2843,-5825)
\put(3452,-5810){\makebox(0,0)[lb]{{{{$2 \Vol = 2 $}%
}}}}
\put(6779,-5730){\makebox(0,0)[lb]{{{{$2 \Vol = 1 $}%
}}}}
\put(3573,-4126){\makebox(0,0)[lb]{{{{$2 \Vol = 1 $}%
}}}}
\put(6779,-4126){\makebox(0,0)[lb]{{{{$2 \Vol = 0 $}%
}}}}
\put(3492,-3084){\makebox(0,0)[lb]{{{{$\# \Crit_+(W_{X,G}) = 1$}%
}}}}
\put(6499,-3084){\makebox(0,0)[lb]{{{{$\# \Crit_+(W_{X,G}) = 0$ }%
}}}}
\end{picture}%

\caption{Wall-crossing for dimensions} 
\label{eulerwall}
\end{figure}

\begin{proof} 
  The first equality in \ref{cross} concerns the change in dimension
  of the quantum cohomology. Denote by $(\partial \Sigma_t)_\pm$ the
  union of facets of $\Sigma_t$ defined by half-spaces that do not
  resp. do contain $0$.  The corresponding partition of facets
  determines a partition $\{ I_+,I_- \}$ of $\{ 1 ,\ldots, l+1 \}$.
  The morphism of coarse moduli spaces from $X \qu_{t_\pm} G$ to
  $X \qu_t G$ is a stacky-weighted blow-down of the orbit that is the
  intersection of divisors corresponding to $I_\pm$ onto the center
  $Z$.  Consider the polytope $\Sigma_{t,\pm}$ given as the convex
  hull of $\{ \nu_j, j \in I_\pm \}$ and $0$.  The volume of the
  polytope $\Sigma_{t,\pm}$ is the dimension of the cohomology of a
  fiber of the inertia stack $I_{E_\pm} \to Z$ by Lemma
  \ref{maxcones}, and $QH(E_\pm) = H(I_{E_\pm})$ by definition.
  Furthermore, $E_\pm$ fibers over $Z$ with fiber a toric stack
  $E_{\pm,z}$.  The fan of $E_{\pm,z}$ is the union of cones generated
  by non-zero vertices in $\Sigma_{t,\pm}$.  Hence
 \begin{eqnarray*}   \dim(QH(E_\pm)) &=& \dim(QH(E_{\pm,z})) \dim(QH(Z)) \\
&=&  \dim(\Sigma_{t,\pm})! \Vol(\Sigma_{t,\pm}) \dim(QH(Z)) .\end{eqnarray*}
Since $X \qu_{t_\pm} G$ are isomorphic away from the exceptional loci, 
\begin{eqnarray*}
  \frac{ \dim QH(X \qu_{t_+} G) - \dim 
  QH(X \qu_{t_-} G)}{\dim(QH(Z))}  &=& \dim(\Sigma_t)! \Vol(\Sigma_{t,+})
                                       -  \dim(\Sigma_t)! \Vol( \Sigma_{t,-})
  \\
                                   &=& \dim(\Sigma_t)! \Vol(\Sigma_t) \end{eqnarray*} 
                                 which proves the claim. 

                                 The second equality in \ref{cross}
                                 describes the change in the dimension
                                 of the Jacobian ring.  We may suppose
                                 that $t_\pm$ are sufficiently close
                                 to the critical value $t_j$ so that
                                 there exists a number $c > 0$ such
                                 that $\Psi( \Crit_+( W_{X,G,t}))$
                                 consists of a single value in
                                 $(-c,c)$, which crosses the boundary
                                 of $\Delta_{X \qu_t G}$ as $t$
                                 crosses $t_j$, and the other
                                 components of
                                 $\Psi(\Crit_+(W_{X,G,t}))$ stay
                                 outside of $(-c,c)$ for all
                                 $t \in (t_-,t_+)$.  The critical
                                 value that crosses the boundary
                                 corresponds to the intersection of
                                 $\dim(\Sigma_t) + 1$-hyperplanes
                                 varying linearly in $t$.  By
                                 Kouchnirenko's theorem \ref{kouch}
                                 and Proposition \ref{stagewise} the
                                 number of the critical points
                                 $y \in \Crit(W_{X,G,t})$ mapping to
                                 the singular set in
                                 $\Delta_{X \qu_t G}$ is
                                 $\dim(\Sigma_t)!  \Vol(\Sigma_t)
                                 \dim(\Jac_+(W_Z))$.
                               \label{infI}
                                 For any interval $I \subset \R$, let
                                 $\Crit_I(W_{X,G})$ denote the subset
                                 of the critical locus
                                 $\Crit(W_{X,G})$ consisting of points
                                 $y$ with
                                 $\inf_{j = 1,\ldots, k} \lan \Psi(y),
                                 \nu_j - \omega_j \ran \in I$.
                                 By the previous paragraph and
                                 Propositions \ref{kouch} and
                                 \ref{stagewise} the difference in the
                                 number of critical points of the
                                 potential before and after the
                                 critical value is equal to
\begin{eqnarray*}
\dim \hJac( W_{X,G,t_+}) & -&  \dim \hJac( W_{X,G,t_-}) 
=
| \Crit_+( W_{X,G,t_+})| - | \Crit_+( W_{X,G,t_-})| \\
&=& 
| \Crit_{(c,\infty)}( W_{X,G,t_+})| - | \Crit_{(c,\infty)}( W_{X,G,t_-})| \\
&& + 
| \Crit_{(0,c)}( W_{X,G,t_+})| - | \Crit_{(0,c)}( W_{X,G,t_-})| \\
&=& 0 + \dim(\Sigma_t)! \Vol(\Sigma_t)  \dim (\Jac(W_Z)) 
 \end{eqnarray*} 
as claimed. \label{crossend}
\end{proof} 

\begin{example} In Figure \ref{eulerwall} we show the flipping simplex
  for a blow-up of $\C^2$ at $0$.  More precisely the top figures show
  the polytope of a blow-up of $\C^2$ and of $\C^2$ respectively; the
  image of the critical point under the tropical moment map $\Psi$ is
  shown as a point in the interior on the upper left.  The middle
  figures show the polytopes $\Sigma_{\pm}$, spanned by
  $(-1,0), (0,1), (1,1)$ resp. $(-1,0), (1,1)$ and their volumes $1/2$
  resp. $0$; the last figures show the Newton polytopes
  $\Delta_{X \qu_{t \pm \eps} G}$ of the potentials before and after
    the blow-down respectively, with the polytopes $\Sigma_-$ shown as
    sub-polytope.
\end{example}

\begin{remark} \label{term}  
\begin{enumerate} 
\item {\rm (Termination of a toric minimal model program)} In
  particular, one sees from the above wall-crossing formula that
  $\dim(QH(X \qu_t G))$ decreases at each wall-crossing.  This is one
  of the proofs of the eventual termination of the toric minimal model
  program, discussed in \cite{cox:toric}, \cite{mats:mori}, where
  $\dim(QH(X \qu_t G))$ is described in combinatorial terms.
\item {\rm (Dependence of the Jacobian ring on the symplectic class)}
  The location of the critical values $\Psi(y), y \in \Crit(W)$ varies
  with the choice of symplectic class $\omega$, and at certain affine
  linear hyperplanes (occurring when more than $n+2$ normal vectors
  have a common value) the critical values $\Psi(y)$ can ``collide''
  as the polarization class $\omega$ varies.  
\end{enumerate} 
\end{remark}

\begin{lemma} \label{monotone} {\rm (Equality of dimensions in the
    Fano case)} Suppose that the family $X \qu_t G$ as above has a
  unique singular point at $t_0$, and undergoes a Mori fibration {\em
    over a point} at $t_0$.  Then
  $\dim \hJac(W_{X,G}) = \dim QH(X \qu_t G)$.
\end{lemma}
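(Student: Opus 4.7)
The plan is to derive monotonicity of $X \qu_t G$ from the fibration-over-point hypothesis, pick an extension of $\omega$ under which the Givental potential factors as a $q$-power times a $q$-independent Laurent polynomial, and then invoke Kouchnirenko's theorem to match dimensions.

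First, I would unpack ``fibration over a point at $t_0$'' via Proposition \ref{explicit}. When the base has dimension zero, the morphism of fans from $X \qu_{t_-} G$ to the base collapses every cone to the origin cone, so the fan of the monotone fiber is the entire fan of $X \qu_{t_-} G$; consequently $X \qu_{t_-} G$ coincides with its monotone fiber. Translated to the level of polytopes, this forces $\Delta_{X \qu_{t_0} G}$ to degenerate to a single point $\mu^* \in \t_\R^\dual$, so $\omega_j + \lan \mu^*, \nu_j\ran = t_0$ for every $j$. Equivalently $\omega = t_0\, c_1^G(TX)$ in $H^2_G(X,\Q)$, and after a shift by $\mu^*$ in the extension ambiguity we may choose the support constants to be a common value $\omega_0 = t_0 > 0$. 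The moment polytope $\Delta_{X \qu G} = \{\mu \in \t_\R^\dual : \lan \mu, \nu_j\ran \ge -\omega_0\}$ then contains $0$ in its interior, and since $t_0$ is the unique singular value, monotonicity persists throughout the chamber $[0,t_0)$.

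Second, with this symmetric extension the potential factors as
\[
W_{X,G}(y) = q^{\omega_0} \widetilde{W}(y), \qquad \widetilde{W}(y) := \sum_{j=1}^k y^{\nu_j},
\]
so the logarithmic critical equations $\sum_{j} \lan \nu_j, \lambda\ran y^{\nu_j} = 0$ are independent of $q$, and $\Crit(W_{X,G,q})$ is a fixed finite subset of the $\C$-torus $T^\dual(\C)$ independent of $q \in \C^\times$. Completeness of the normal fan (from compactness of $X \qu G$) places $0$ in the interior of the Newton polytope $\on{hull}(\nu_1,\dots,\nu_k)$, so Proposition \ref{finite} applies. Each such $q$-independent critical point $y \in T^\dual(\C)$ has coordinates $q^{\omega_0} y^{\nu_j}$ under the embedding \eqref{inject} which tend to $0$ as $q \to 0$, so by Proposition \ref{same} every critical point of $W_{X,G,q}$ contributes to $\#\Crit_+(W_{X,G})$ and hence $\Crit_+(W_{X,G}) = \Crit(W_{X,G,q})$.

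Third, Kouchnirenko's theorem \ref{kouch} gives $\#\Crit(W_{X,G,q}) = \dim(T)!\,\Vol(\on{hull}(\nu_1,\dots,\nu_k))$, and since $X \qu G$ is Fano, Corollary \ref{equal} identifies this volume with $\dim QH(X \qu_t G)$. Combining with Proposition \ref{batjac} yields
\[
\dim \hJac(W_{X,G}) = \#\Crit_+(W_{X,G}) = \#\Crit(W_{X,G,q}) = \dim QH(X \qu_t G).
\]
The main obstacle is step one: correctly extracting from Proposition \ref{explicit} that a fibration with zero-dimensional base at $t_0$ forces $\Delta_{X \qu_{t_0} G}$ to collapse to a single point, and hence that $\omega$ is a positive multiple of $c_1^G(TX)$. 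Once that is secured, the rest is a formal consequence of the factorization $W_{X,G} = q^{\omega_0}\widetilde{W}$ together with Kouchnirenko's theorem applied to $\widetilde{W}$.
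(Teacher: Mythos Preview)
Your argument has a genuine gap in step one: the claim that $\omega_j + \lan \mu^*, \nu_j\ran = t_0$ for \emph{every} $j$ (and hence that $\omega = t_0\, c_1^G(TX)$ and $W_{X,G} = q^{t_0}\widetilde W$) only holds when the presentation of $X\qu G$ is minimal, i.e.\ when every weight $\mu_j$ actually defines a facet of $\Delta_{X\qu G}$. If the $j$-th inequality is spurious, then after shifting by $\mu^*$ you get $\omega_j' > t_0$ strictly, so the corresponding term $q^{\omega_j'} y^{\nu_j}$ carries a higher power of $q$ than the others and the factorization $W_{X,G}=q^{t_0}\widetilde W(y)$ with $\widetilde W$ $q$-independent fails. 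Concretely, take the paper's Example~\ref{potexamples}(b): $X\qu G\cong\P^1$ with potential $W = qy^2 + y + q/y$. Here the anticanonical tmmp has a unique singular time, the polytope shrinks to a point, yet $\omega=(0,1)$ is \emph{not} a multiple of $c_1^G(TX)=(1,2)$, and $W$ does not factor. In this example $\Crit(W_{X,G,q})$ has three points but only two lie in $\Crit_+(W_{X,G})$, so your step-two conclusion that $\Crit_+ = \Crit$ is also false in general.

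The paper's proof treats exactly this issue by splitting off the spurious (``fake'') terms: one writes $W_{X,G} = W_{X\qu G} + W_{X,G,\fake}$ where $W_{X\qu G}$ is the minimal-presentation potential (for which your argument is correct) and $W_{X,G,\fake}$ consists of strictly higher-order terms in $q$. Since the critical points of $W_{X\qu G}$ are non-degenerate under the standing genericity hypothesis, the formal criterion for smoothness (an implicit-function-theorem step) gives a bijection $\Crit_+(W_{X\qu G})\to\Crit_+(W_{X,G})$. This perturbation step is what is missing from your proposal; once you add it, your argument for the minimal case together with Kouchnirenko/Corollary~\ref{equal} gives the result.
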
 

\begin{proof} In the absence of spurious facets, the Fano case reduces
  to Kouchnirenko's Theorem \ref{kouch}.  The assumption that
  $X \qu_t G$ undergoes a fibration over a point means that every
  facet of $\Delta_{X \qu_t G}$ is equidistant from some point
  $\mu \in \t_\R^\dual$, so that $X \qu_t G$ is Fano.  Without
  loss of generality we may assume that $\mu = 0$.  Suppose that the
  presentation of $X$ as a symplectic quotient is the minimal one,
  that is, each weight of $X$ corresponds to a facet of
  $\Delta_{X \qu_t G}$ (no spurious facets).  In this case all
  $y \in \Crit_+(W_{X,G})$ have $\Psi(y) = 0$.  Thus we may omit the
  parameters $q$ from the definition of the potential and the number
  of critical points is equal to $\dim QH(X \qu_t G)$ by
  Kouchnirenko's Theorem \ref{kouch}.

  In the case that the presentation is not minimal, the equality
  follows from the formal implicit function theorem.  Let
  $W_{X \qu G}: \ti{T}^\dual(\Lambda) \to \Lambda$ denote the Givental
  potential associated to the minimal presentation.  That is, if
  \[ \cT = \{ i | \codim( \{ \lan \nu_i, \mu \ran = - \omega_i \} \cap
  \Delta_{X \qu G} ) = 1 \} \subset \{ 1, \ldots, k \} \]
denotes the indices of the inequalities defining facets of $\Delta_{X
  \qu G}$ then
\[ W_{X \qu G}(\ti{g}) = \sum_{i \in \cT} q^{\omega_i} y_i \]
and each term corresponds to a facet of $\Delta_{X \qu G}$. Let
$W_{X,G,\fake} = W_{X,G}  - W_{X \qu G} $
denote the terms arising from the ``fake facets'', that is, weights of
$X$ that do not define facets of $X \qu G$ so that
\[ W_{X,G} = W_{X \qu G} + W_{X,G,\fake} .\]
Let $c_1^{G}(X)_{\true} \in \g^\dual_\Q \cong H^2_G(X)$ be the sum of
the weights corresponding to divisors of $X \qu G$, that is, the true
facets.  By the genericity assumption the critical locus
$\Crit(W_{X \qu G})$ is non-degenerate.  The formal criterion for
smoothness (that is, the formal implicit function theorem as used in
the proof of Proposition \ref{stagewise}) implies that there is an
isomorphism $ \Crit_+(W_{X \qu G}) \to \Crit_+(W_{X,G}) $.  That is,
adding in the higher order terms give a deformation of the critical
locus $\Crit(W_{X \qu G})$ to $\Crit(W_{X,G})$ lying over the interior
of the moment polytope $\Delta_{X \qu G}$.
\end{proof} 

By combining the Fano dimensional equality \ref{monotone} and the
wall-crossing and fibration formulas \ref{cross} and \ref{fibration}
we have the equality of dimension in general:

\begin{theorem} {\rm (Equality of Dimensions)} \label{equality} 
 For $X \qu G$ as in the statement of Theorem \ref{main}, $\dim
 \hJac(W_{X,G}) = \dim QH(X \qu G)$.
\end{theorem}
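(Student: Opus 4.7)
The plan is to prove the equality by induction on $n := \dim(T)$. The base case $n = 0$ is immediate, since then $X \qu G$ is a 0-dimensional Deligne-Mumford stack and both $QH(X \qu G)$ and $\hJac(W_{X,G})$ are finite sums of copies of $\Lambda$ indexed by the points of the inertia stack resp.\ the finitely many solutions of the discrete system $\d W_{X,G} = 0$, and the counts match by inspection. For the inductive step I may assume the equality holds for every toric orbifold whose residual torus has dimension strictly less than $n$. Since both sides of the claimed equality are upper-semicontinuous in $\omega$, I may also perturb $\omega$ so that the genericity hypothesis of Proposition \ref{generic} holds along the entire path we are about to construct.

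The core of the argument is to deform $\omega$ via the anticanonical minimal model program $\omega_t = \omega - t\, c_1^G(X)$, $t \ge 0$. By Proposition \ref{with} and the discussion of \cite{reid:decom}, after finitely many singular values $0 < t_1 < \cdots < t_N$ the path terminates, and at the final singular value $t_N$ we are in the fibration case of Proposition \ref{explicit}: $X \qu_{t_N} G$ admits a Mori fibration over a toric base $(X \qu_{t_N} G)'$ with toric fiber $(X \qu_{t_N} G)''$. Termination is guaranteed because, by Lemma \ref{cross}(a), $\dim QH(X \qu_t G)$ strictly decreases at every flip/divisorial contraction, so there can be only finitely many such walls before a fibration must occur.

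At the endpoint I split into two cases. If the fibration is over a point, then $X \qu_{t_N} G$ is monotone and Lemma \ref{monotone} gives the equality $\dim QH(X \qu_{t_N-} G) = \dim \hJac(W_{X,G,t_N-})$ directly. Otherwise both $(X \qu_{t_N} G)'$ and $(X \qu_{t_N} G)''$ have residual torus of dimension strictly smaller than $n$; the inductive hypothesis applies to each, and Lemma \ref{fibration} (combining the factorizations of $\dim QH$ and $\dim \hJac$ into products over base and fiber) yields the equality at $t = t_N - \eps$. Having established the equality immediately to the left of $t_N$, I propagate it backwards across each of the walls $t_{N-1}, \ldots, t_1$: at every such wall only flips or divisorial contractions occur, and Lemma \ref{cross} asserts that the jumps in $\dim QH(X \qu_t G)$ and in $\dim \hJac(W_{X,G,t})$ are both equal to $\dim(T)!\Vol(\Sigma_{t_j})$. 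Thus the equality persists all the way back to $t = 0$, proving the theorem.

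The main obstacle is the fibration endpoint: one must verify that the base and fiber both inherit (and produce) presentations as git quotients of vector spaces to which the inductive hypothesis applies, and that the splitting of the potential $W_{X,G,t_N-} = W' + W''$ underlying Lemma \ref{Wfiber} is compatible with the completion $\hJac$ (as opposed to the full $\Jac$, where the product decomposition fails, cf.\ the Hirzebruch remark following Lemma \ref{Wfiber}). The subtlety that only critical points in $\Crit_+$ behave functorially under fibration is precisely what forces us to work with the formal completion throughout, but once this is granted the bookkeeping is routine.
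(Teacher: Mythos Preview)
Your proposal is correct and follows essentially the same route as the paper's proof: run the anticanonical tmmp, match the jumps at each flip/divisorial contraction via Lemma~\ref{cross}, and at the terminal fibration invoke Lemma~\ref{fibration}. The only difference is that you make the induction on $\dim(T)$ explicit (the paper leaves it implicit in its appeal to Lemma~\ref{fibration}, which only factorizes both sides and so tacitly requires the equality for the lower-dimensional base), and you split off the fibration-over-a-point case to handle directly via Lemma~\ref{monotone} rather than treating it as the degenerate instance of Lemma~\ref{fibration} with zero-dimensional base.
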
 

\begin{proof} By induction we may assume that the dimensional equality
  of Theorem \ref{equality} holds for toric stacks of dimension
  smaller than $\dim(X \qu G)$.  By Lemma \ref{explicit} as $t$ varies
  the toric orbifold $X \qu_t G$ undergoes a finite sequence of flips
  or weighted blow-downs
  $X \qu_{t_j - \eps} G \dashrightarrow X \qu_{t_j + \eps} G$,
  followed by a fibration to a toric stack of $(X \qu_{t_n} G)'$ of
  smaller dimension with Fano fibers $(X \qu_{t_n} G)''$.  The
  wall-crossing terms are the same, by Lemma \ref{cross}.  In the case
  of a fibration, the equality follows from Lemma \ref{fibration}.
\end{proof} 

\begin{proof}[Proof of Theorems \ref{main} and \ref{main0}]   
By \cite{qkirwan1,qkirwan2,qkirwan3}, the linearization $D_\alpha \kappa_X^G$ of the
quantum Kirwan map descends to a map
\[T_\omega \kappa_X^G/QSR_{X,G} :QH_G(X)/ QSR_{X,G} \to QH( X\qu G)
\]
is an isomorphism.  By Theorem \ref{weaktoric2},
$T_\alpha \kappa_X^G/QSR_{X,G}$ is surjective.  By Theorem
\ref{equality}, the induced map from $\hJac(W_{X,G})$ to
$T_{\kappa_G(\omega)}QH(X \qu G)$.  Theorem \ref{main0} follows from
the identification with the Jacobian ring in Proposition \ref{ident}.
\end{proof} 

\begin{corollary}\label{semisimple} The quantum cohomology $QH(X \qu G)$ of any proper toric orbifold
  with projective coarse moduli space $X \qu G$ over the universal
  Novikov field $\Lambda$ is semisimple at bulk deformation
  $\kappa_X^G(0)$ for generic symplectic classes
  $ \omega \in H^2_G(X,\Q)$.
\end{corollary}

\begin{proof} As explained in \cite[Proposition 4.9]{iri:integral},
  semisimplicity at the bulk deformation $\kappa_X^G(0)$ follows from
  the identification with the Batyrev ring
  $\widehat{QH}_G(X)/ \widehat{QSR}_X^G$, or rather, the Jacobian ring
  $\Jac_+(W_{X,G})$ and the fact that for generic $\omega$, the
  potential $W_{X,G}$ has only stagewise non-degenerate critical
  values $y \in \Crit(W)$, see Proposition \ref{stagewise} and also Iritani
  \cite[Proposition 3.10]{iri:integral}.  Note that semisimplicity for
  generic values of $q$ also follows from Lemma \ref{nondeg}.
\end{proof}

\begin{remark} \label{nss} 
\begin{enumerate} 
\item {\rm (Non-semisimple cases)} An example of a non-generic
  symplectic structure with non semi-simple quantum cohomology
ring 
  $QH(X \qu G)_{q =1}$ is given in  Ostrover-Tyomkin \cite{os:qh}.
\item {\rm (Dubrovin conjecture for toric orbifolds)} Semisimplicity
  is related by a conjecture of Dubrovin, see \cite{bay:ss}, to the
  existence of a full exceptional collection in the bounded derived
  category of coherent sheaves $D^b \on{Coh}(Y)$ of $Y$. In the toric
  case the existence of such a collection is proved by Kawamata
  \cite{kaw:der}.
\item \label{c1}  {\rm (Equivariant first Chern class maps to the potential)}
  Under the isomorphism from $QH_G(X)/QSR_{X,G} $ to $\Jac(W_{X,G})$,
  the coset of the first Chern class
  $[c_1^G(X)] \in QH_G(X)/QSR_{X,G}$ maps to the potential
  $W_{X,G} \in \Jac_+(W_{X,G})$ itself, by definition of the
  isomorphism.  However, $c_1^G(X)$ does not map to
  $c_1(X \qu G) \in H^2(X \qu G) \subset QH(X \qu G)$ in general.
  \label{halfpoint6} Consider the example of $G = \C^\times$ acting on
  $X = \C$ with weight two, so that
  $X \qu G = \C \qu \C^\times = \P(2) = B\Z_2$ is the stacky
  half-point.  In this case $c_1^G(X)$ is a degree two class and maps
  under $D_\omega \kappa_X^G$ to the twisted sector in $X \qu G$,
  since the contributing maps in $\M^G_{1,1}(\bA,X)$ have degree one.
  On the other hand, $c_1(X \qu G)$ is trivial since the tangent
  bundle is rank zero.
\end{enumerate}
\end{remark} 

As a corollary to the second part of the Remark and the discussion
above we have the following, which ``quantifies'' the sense in which
flips in the minimal model program ``make the variety more Fano''.

\begin{corollary} {\rm (Decrease in the eigenvalues of $c_1 \star$
    under mmp flips)} Suppose that $X \qu_t G$ undergoes a flip at
  $t = t_j$.  Then the minimal $q$-valuation
  $ \min( \on{val}_q(\lambda_i))$ of the eigenvalues
  $\lambda_i \in \Lambda$ of quantum multiplication by
  $D_\omega \kappa^G_X (c_1^G(TX))$ on
  $T_{\kappa_X^G(\omega)} QH(X \qu G)$ increases.
\end{corollary}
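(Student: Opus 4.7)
By Theorem \ref{main} and the observation in the preceding Remark, the image of $c_1^G(TX)$ under the isomorphism $T_{\kappa_X^G(\omega)} QH(X \qu G) \to \hJac(W_{X,G})$ is the Givental potential $W_{X,G}$ itself. Hence the eigenvalues of quantum multiplication by $D_\omega \kappa_X^G c_1^G(TX)$ are exactly the values $W_{X,G,t}(y)$, counted with multiplicity, as $y$ ranges over $\Crit_+(W_{X,G,t})$; for generic $\omega$ the ring is semisimple and these values are distinct. It therefore suffices to compare the minimum of $v_q(W_{X,G,t}(y))$ across the flip at $t=t_j$, where $v_q$ denotes the $q$-adic valuation on $\Lambda$.

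The core analytic input is the identity
\[
v_q(W_{X,G,t}(y)) = \dist(\Psi(y),\, \partial \Delta_{X \qu_t G}),
\]
where the distance on the right is measured against the affine-linear functionals $\lan \cdot, \nu_j \ran + (\omega_j - t)$ defining the facets. Each monomial $q^{\omega_j - t} y^{\nu_j}$ has valuation $\omega_j - t + \lan \Psi(y), \nu_j \ran$, equal to the signed distance from $\Psi(y)$ to the $j$-th facet; for $y \in \Crit_+$, Proposition \ref{closure} guarantees these are all positive, so the valuation of the sum is at least the minimum, achieved exactly on the index set $I(\Psi(y))$. The leading-order terms $\sum_{j \in I(\Psi(y))} y^{\nu_j}$ coincide, after rescaling, with evaluation of the monotone potential of the toric stack whose fan is generated by $\Sigma_{t_j}$, which by Kouchnirenko's Theorem \ref{kouch} and the Fano property has only nonzero critical values; this rules out cancellation and yields the identity.

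With the identity in place, the geometry of the flip finishes the argument. By Proposition \ref{explicit} and the count in the proof of Lemma \ref{cross}(b), there are exactly $\dim(T)! \Vol(\Sigma_{t_j})$ critical points on one side of $t_j$ whose tropical moment images approach the vertex of $\Delta_{X \qu_{t_j} G}$ cut out by the facets of $\Sigma_{t_j}$; the corresponding distances to $\partial \Delta_{X \qu_t G}$ tend to zero as $t \to t_j$, so these are precisely the critical points of smallest $q$-valuation, and they leave $\Crit_+$ once $t$ crosses $t_j$. All other critical points keep their tropical moment images bounded away from the flipping simplex; by Corollary \ref{indep} their valuations vary continuously in $t$ near $t_j$ and remain bounded below by some $\delta > 0$. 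Consequently, on the side where the ``vanishing'' critical points survive, the minimum valuation is strictly less than $\delta$, while on the opposite side it is at least $\delta$, so the minimum $q$-valuation increases across the flip.

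The chief obstacle is the identity $v_q(W_{X,G,t}(y)) = \dist(\Psi(y), \partial \Delta_{X \qu_t G})$: one must exclude non-generic cancellations in the leading-order terms at critical points. For generic $\omega$ this follows from the nondegeneracy of the flipping-simplex potential via Kouchnirenko's theorem; along non-generic miniwalls (including the monotone locus) one would argue by a small perturbation of $\omega$, using continuity of the valuations provided by Corollary \ref{indep}.
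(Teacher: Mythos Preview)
Your proof is correct and follows the same route as the paper's very terse argument, which simply asserts that ``the critical value(s) of $W_{X,G}$ moving outside the moment polytope at the time of the flip are those with lowest $q$ valuation.'' You have unpacked this into the valuation identity $v_q(W_{X,G,t}(y)) = \dist(\Psi(y),\partial\Delta_{X\qu_t G})$ and then tracked which critical points achieve the minimum across the wall, which is exactly the content of the paper's one-line proof.

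Two small comments. First, your invocation of Kouchnirenko's theorem to rule out cancellation of the leading-order terms is not quite the right tool: Kouchnirenko counts critical points but says nothing about critical values being nonzero. What you actually need is that the potential of the flipping simplex (a stacky-projective-space-type potential $\sum_{j=1}^{n+1} y^{\nu_j}$) has all critical values nonzero; this is an elementary direct computation, and you should state it as such rather than attribute it to Kouchnirenko. Second, the paper does not address the cancellation issue at all, so your treatment of this point is a genuine addition rather than a reproduction of the paper's argument --- it makes explicit a step the paper leaves implicit.
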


\begin{proof}   
  The critical values of $W_{X,G}$ moving outside the moment polytope
  $\Delta_{X \qu G}$ at the time $t_j$ of the flip are those values
  $W_{X,G}(y) , y \in \Crit(W_{X,G})$ with lowest $q$ valuation for
  $t$ slightly smaller than $t_j$.  On the other hand, by Remark
  \ref{nss} \eqref{c1} the $q$-valuations of such $y$ are the lowest
  $q$-valuations of eigenvalues of $D_\omega \kappa^G_X c_1^G(TX)$
\end{proof}

\begin{corollary} {\em (Equivariant version of the Batyrev presentation)} 
  \label{maineq}  
There is a canonical isomorphism
$T_\omega \widehat{QH}_{\ti{G}}(X)/
\widehat{QSR}_X^{G,\ti{G}}(\omega) \to T_{\kappa_X^{\ti{G},G}(\omega)}
QH_{\ti{G}/G}(X \qu G)$
for any rational symplectic class $\omega \in H_{\ti{G}}^2(X)$.
\end{corollary}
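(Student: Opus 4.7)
The plan is to follow the same three-step strategy as in the proof of Theorem \ref{main}, promoted to the $\ti{G}$-equivariant setting, and then to deduce the isomorphism by a base-change argument from the non-equivariant result. Let $T = \ti{G}/G$, so that $H(BT,\Q) \cong \Q[\eta_1,\ldots,\eta_m]$ where $m = \dim T$, and $QH_{\ti{G}}(X,\Q)$ is a free module over $H(B\ti{G},\Q) \cong H(BG,\Q) \otimes H(BT,\Q)$ with the natural Kirwan structure. Since $X \qu G$ carries a residual $T$-action whose inertia stack is again a disjoint union of toric orbifolds (with the same twisted sectors as in the non-equivariant case), $QH_T(X \qu G,\Q)$ is a free $H(BT,\Q) \otimes \Lambda$-module of rank equal to $\dim QH(X \qu G,\Q)$.

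First I would construct the equivariant quantum Kirwan map $\kappa_X^{\ti{G},G}$ by virtual enumeration of $T$-equivariant families of affine gauged maps, exactly as for $\kappa_X^G$ in \cite{qkirwan}, equivariantly for the residual $T$-action that commutes with the gauge group $G$. The fractional Batyrev relations of Proposition \ref{fractional} carry over with the weights $\mu_j \in \g^\dual$ replaced by their unrestricted versions $\eps_j \in \ti{\g}^\dual$, and the proof of Theorem \ref{weaktoric2} adapts verbatim because it only used the structure of $\ol{\M}_{1,1}^G(\bA,X,d)$ as a toric variety, which supports a canonical $T$-equivariant enhancement. This yields surjectivity of
\[
D_\alpha \kappa_X^{\ti{G},G}:\ T_\alpha \widehat{QH}_{\ti{G}}(X,\Q) \twoheadrightarrow T_{\kappa_X^{\ti{G},G}(\alpha)} QH_T(X \qu G,\Q)
\]
as a map of $H(BT,\Q)\otimes\Lambda$-modules, in a formal neighborhood of $\omega$.

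Next I would verify that the equivariant quantum Stanley--Reisner ideal $\widehat{QSR}_X^{G,\ti{G}}(\alpha)$ from Definition \ref{qsr} lies in the kernel. The proof of Proposition \ref{ker} via the adiabatic limit theorem goes through $T$-equivariantly: the localized gauged graph potential $\tau_{X,-}^G$ is already defined using the $\ti{G}$-equivariant Euler classes, and the GKZ differential operators $\Box_d$ with $\mu_j$ replaced by $\eps_j$ annihilate it, hence the induced operators on $\tau_{X \qu G,-}^T$ give the equivariant Batyrev relations. This produces the factored map
\[
T_\alpha \widehat{QH}_{\ti{G}}(X,\Q)/\widehat{QSR}_X^{G,\ti{G}}(\alpha) \to T_{\kappa_X^{\ti{G},G}(\alpha)} QH_T(X \qu G,\Q),
\]
which is surjective.

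The main obstacle will be showing injectivity, which I plan to handle by reducing the rank equality to Theorem \ref{main} via specialization of the equivariant parameters. Both source and target are finitely generated modules over $H(BT,\Q) \otimes \Lambda$, and the target is free of rank $\dim QH(X \qu G,\Q)$. Setting the $T$-equivariant parameters $\eta_1,\ldots,\eta_m$ to zero recovers the non-equivariant map from Theorem \ref{main}, which is an isomorphism of rank $\dim QH(X \qu G,\Q)$. By Nakayama's lemma, the surjection above, after completion at the maximal ideal of $H(BT,\Q)$, must be an isomorphism once we verify that the source is also generated by $\dim QH(X \qu G,\Q)$ elements over $H(BT,\Q) \otimes \Lambda$. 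For this, the equivariant analogue of Kouchnirenko's theorem (applied to the potential $W_{X,G}^{\alpha}$ now regarded as a function on $T^\dual$ with coefficients in $H(BT,\Q) \otimes \Lambda$, in which the monomials $q^{\omega_j}y^{\nu_j}$ acquire additional equivariant weights) gives the matching rank, because the Newton polytope and its volume are unchanged by the equivariant deformation. Combined with the identification of the source with the completed equivariant Jacobian ring (extending Proposition \ref{batjac} by linearity over $H(BT,\Q)$), this gives rank equality, and surjectivity then forces the map to be an isomorphism.
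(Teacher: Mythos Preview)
Your approach is essentially the paper's: deduce the equivariant statement from the non-equivariant one (Theorem \ref{main0}) by a freeness/Nakayama argument over $H(BT,\Q)\otimes\Lambda$.  The paper's version is shorter: it simply invokes equivariant formality to assert that both $QH_T(X\qu G,\Q)$ and the equivariant Batyrev ring are free $H(BT,\Q)\otimes\Lambda$-modules, and observes that a map of free modules of equal finite rank which specializes to an isomorphism modulo the $T$-equivariant parameters is itself an isomorphism.  Your re-derivation of surjectivity and of the containment $\widehat{QSR}_X^{G,\ti G}\subset\ker D_\alpha\kappa_X^{\ti G,G}$ in the $T$-equivariant setting is correct but unnecessary once both sides are known to be free.

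The one genuine imprecision is your appeal to an ``equivariant Kouchnirenko theorem'' to bound the number of generators of the source.  Kouchnirenko computes $\#\Crit(W_{X,G})$ via the Newton-polytope volume, not $\#\Crit_+(W_{X,G})$; the equality $\dim\hJac(W_{X,G})=\dim QH(X\qu G)$ comes from the tmmp wall-crossing argument (Theorem \ref{equality}), and the two numbers differ exactly in the non-Fano case (Corollary \ref{equal}).  The clean fix is to drop Kouchnirenko entirely: the source modulo the $T$-parameters is the non-equivariant formal Batyrev ring, which has dimension $\dim QH(X\qu G)$ by Theorem \ref{equality}, so by Nakayama the equivariant source is generated by that many elements over $H(BT,\Q)\otimes\Lambda$.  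Your surjection onto a free module of the same rank is then an isomorphism, as you claim.
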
  

\begin{proof}  We have already shown in Theorem \ref{main0}
the non-equivariant version of the statement in Corollary
\ref{maineq}, that is, setting the equivariant parameters for $T =
\ti{G}/G$ to zero.  By equivariant formality, $QH_{\ti{G}/G}(X \qu G)$ is a free $QH_{\ti{G}/G}(\pt)$-module, and this implies that
$\widehat{QH}_{\ti{G}/G}(X \qu G)$ is a free
$\widehat{QH}_{\ti{G}/G}(\pt)$ module.  Since the same is true for
the left-hand-side, it follows that the linearization of the
equivariant quantum Kirwan map map $T_\omega \widehat{QH}_{\ti{G}}(X)/
\widehat{QSR}_X^{G,\ti{G}} \to T_{\kappa_X^{\ti{G},G}(\omega)}
QH_{\ti{G}/G}(X \qu G)$ is also an isomorphism.
\end{proof} 

\subsection{ Invariance of quantum cohomology under weighted toric
  flops}
 \label{flops} 

 In this section we digress to show that quantum cohomology is
 invariant under weighted toric flops.  Let $X \qu_t G$ be a family of
 toric quotients as above (not necessarily in the direction of the
 canonical class $c_1(X \qu_t G)$) so that $X \qu_t G$ is a locally
 free quotient for $t$ generic.

 \begin{definition} The variation of git quotient $X \qu_t G$
   undergoes a {\em flop} at a singular time $t_i$ if there is a
   unique point $x \in X$ semistable for $t_i$ with
   positive-dimensional stabilizer $G_x$, the group $G_x$ is
   one-dimensional and the sum $\sum_{i=1}^l \nu_i$ of the weights
   $\nu_i \in \Z, i =1,\ldots, l$ for $G_x$ on $T_x X$ is zero.
\end{definition}

\begin{lemma} If $X \qu_t G$ undergoes a flop at $t_i$ then the
  quotients $X \qu_{t_i \pm \eps} G$ on either side of the critical
  value $t_i$ are $K$-equivalent in the sense that the canonical
  bundles are pull-backs of the same (rational) bundle under the
  morphism of coarse moduli spaces
  $X \qu_{t_i \pm \eps} G \to X \qu_{t_i} G$.
\end{lemma}

\begin{proof} By Kempf's descent criterion, see \cite[2.3]{drezet},
  the canonical bundle $K_X$ descends to the singular quotient
  $X \qu_{t_i} G$ and similarly $K_X$ descends to the canonical bundle
  $K_{X \qu_{t_i \pm \eps} G}$ on $X \qu_{t_i \pm \eps} G$; where the
  quotients are stacks these bundles exist rationally on the
  corresponding moduli spaces.  The maps
  $X \qu_{t_i \pm \eps} G \to X \qu_{t_i} G$ are induced by inclusions
  of semistable loci and the claim follows.
\end{proof}

\begin{proposition} \label{analytic} If $X \qu_t G$ undergoes a flop
  at $t = t_i$ then the quantum cohomologies
  $QH(X \qu_{t_i \pm \eps} G)$ are isomorphic as vector spaces, and
  the quantum products $\star_t$ are related by analytic continuation.
\end{proposition} 

\begin{proof} It suffices to show, by the description of the quantum
  cohomology in Theorem \ref{main} that no critical values cross the
  boundary of the moment polytope at the critical time. We suppose
  that the facets
  $F_{j,t_i \pm \eps} \subset \Delta(X \qu_{t_i \pm \eps} G)$ from
  Definition \eqref{flip} meeting the singular moment value (which we
  may assume maps to $0$) at time $t_i$ (which we may assume equals
  $0$) are numbered $1,\ldots, l+1$.  By the genericity assumption in
  Proposition \ref{generic} and Lemma \ref{stacky2}, the critical
  values $y_t \in \Crit(W_t)$ for each $t$ mapping to $0$ at $t = 0$
  are stagewise non-degenerate.  By the analysis in Lemma
  \ref{stagewise}, each such $y_t$ maps under $\Psi$ to a point
  $\mu t \in \t^\dual$ such that
  $ \lan \mu t, \nu_i \ran - c_i t, i = 1,\dots, l+1 $ are independent
  of $i$.  By local triviality of the canonical bundle, the divisor
  $\sum_{i=1}^{l+1} c_i D_i$ with coefficients $c_i$ is locally
  linearly equivalent to the divisor $\sum_{i=1}^{l+1} (c_i - c) D_i$
  with coefficients $c_i - c$, for any constant $c$.  Thus for any
  $c \in \R$ there exists a $\mu' \in \t^\dual_\R$ such that
  $ \lan \mu' t, \nu_i \ran = ( c_i - c) t, i = 1,\dots, n+1 $ is
  independent of $i$.  Taking e.g. $c_1 = c$ we obtain that
  $\lan \mu',t \nu_i \ran =0$ for all $i = 1,\ldots, l+1$ vanish for
  all $i$, and the inequalities
  $\lan \mu' t,\nu_i \ran \ge (c_i - c) t$ are satisfied with equality
  for all $t$.  Thus the family $\mu' t$ does not cross the boundary
  of the moment polytope.  It follows that $\dim \hJac(W_{X \qu_t G})$
  is independent of $t$ in a neighborhood of $t_i$, so that
  $\hJac(W_{X \qu_{t_i \pm \eps} G})$ are isomorphic as vector spaces.
  The products $\star_t$ are related by analytic continuation by
  analytic dependence of the Jacobian ideal
  $\lan \partial_\lambda W_{X,G}(y e^\lambda) _{\lambda = 0}\ran$ in
  Definition \ref{jac} on the symplectic class.
\end{proof}

\section{Minimal models and non-displaceable Lagrangians} 
\label{discuss} 

This section is a discussion of how the results here combine with
those of \cite{wo:gdisk}, \cite{ww:qu} on non-displaceable Lagrangian
tori.  In particular we explain that toric orbifolds can have
infinitely many non-displaceable tori because they can have infinitely
many runnings of the toric minimal model program.  Recall the
following from Fukaya-Oh-Ohta-Ono \cite{fooo:toric1}, \cite[Theorem
3.17,Corollary 4.6]{fooo:toric2} and Woodward \cite{wo:gdisk}:

\begin{theorem} \label{wodisk} {\rm (Non-displaceable toric moment
    fibers via critical points of the Givental potential)} Let $X$ be
  a finite dimensional vector space with a linear action of a torus
  $G$ and polarization so that the git quotient $X \qu G$ is a proper
  toric Deligne-Mumford stack with projective coarse moduli space,
  moment map $\Phi: X \qu G \to \t_\R^\dual$ and moment polytope
  $\Delta_{X \qu G} = \Phi(X \qu G)$.  Let $W_{X,G}$ denote the Givental  potential and $\Psi: \Crit_+(W_{X,G}) \to \Delta_{X \qu G}$ the
  tropical moment map. Then for any $y \in \Crit_+(W_{X,G})$, the
  inverse image $\Phinv(\Psi(y)) \subset X \qu G$ is a Hamiltonian
  non-displaceable Lagrangian torus.
\end{theorem}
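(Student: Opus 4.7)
The plan is to apply the standard Floer-theoretic criterion for non-displaceability: if a Lagrangian $L \subset X \qu G$ admits a weak bounding cochain $\rho$ making the Floer $A_\infty$-algebra defined, and the resulting self-Floer cohomology $HF((L,\rho),(L,\rho))$ is non-zero, then $L$ is Hamiltonian non-displaceable. The concrete input will be the pair $(L,\rho)$ extracted from $y \in \Crit_+(W_{X,G})$: the ``logarithmic part'' $\Psi(y)\in \t_\R^\dual$ locates the torus fiber $L:=\Phinv(\Psi(y))$ (which, by Proposition \ref{closure}, lies in the interior of the moment polytope and hence is a genuine Lagrangian torus in the smooth locus of $X \qu G$), and the ``angular part'' $\xi_0 \in \t^\dual/\t^\dual_\Z$ of $y \in T^\dual(\Lambda_0)$ defines a $\Lambda_0^\times$-valued flat local system $\rho$ on $L \cong T_\R$.

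First I would set up a Floer-type invariant appropriate for the orbifold setting. Rather than using the corrected open Gromov-Witten potential of Fukaya et al., I would use a gauged Floer theory in the spirit of \cite{wo:gdisk}: one works with $G$-equivariant holomorphic maps $u: \Sigma \to X$ from a disk or strip $\Sigma$ into the linear space $X$, with boundary mapping to the $G$-orbit of the preimage of $L$, rather than with pseudoholomorphic maps directly into $X \qu G$. Because $X$ is affine, there are no non-constant holomorphic spheres in $X$, and the classification of gauged disks is completely analogous to the classification of affine gauged maps in Theorem \ref{cvort}. The outcome is that the disk potential of $(L,\rho)$ is \emph{uncorrected}, and agrees on the nose with the naive Givental potential $W_{X,G}$, where each monomial $q^{\omega_j} y^{\nu_j}$ records the contribution of a gauged disk whose boundary winds against the $j$-th facet normal $\nu_j$ with symplectic area $\omega_j$.

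Next I would translate the critical point equation into the weak Maurer-Cartan condition. With the identification $H^1(L;\Lambda) \cong \t^\dual \otimes \Lambda$, a direct computation (differentiating the sum of monomials) shows that the coefficient of the unit $1_L \in H^0(L)$ in $m_0^\rho$ is $W_{X,G}(y)$, while the coefficient of $\lambda \in H^1(L)\cong \t^\dual$ is $\partial_\lambda W_{X,G}(y)$. Thus $y \in \Crit(W_{X,G})$ is exactly the condition that $m_0^\rho = W_{X,G}(y)\cdot 1_L$, i.e.\ that $(L,\rho)$ is weakly unobstructed, so that $m_1^\rho \circ m_1^\rho = 0$. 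A Morse model computation on $L \cong T_\R$ (in which, at leading order in $q$, the differential $m_1^\rho$ reduces to the classical de Rham differential) then gives $HF((L,\rho),(L,\rho)) \cong H^*(T_\R;\Lambda) \ne 0$, and Hamiltonian invariance of Floer cohomology yields the claimed non-displaceability.

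The main obstacle is the analytic and combinatorial control of gauged disks in the orbifold setting. One must verify the transversality and compactness needed to define $m_0^\rho, m_1^\rho$ over $\Lambda$; include disks with interior orbifold markings corresponding to twisted sectors (which appear whenever $\Psi(y)$ lies near a stacky facet); and check that these stacky disks contribute only to the monomial terms $q^{\omega_j}y^{\nu_j}$ already present in $W_{X,G}$, not to new terms or to the higher $m_k$ relations in a way that would spoil weak unobstructedness. Once this is in place, the positivity hypothesis $y \in \Crit_+(W_{X,G})$ (rather than merely $y \in \Crit(W_{X,G})$) is what guarantees via Proposition \ref{closure} that $\Psi(y)$ lies in $\Delta_{X \qu G}^\circ$, so that $L$ is an honest Lagrangian torus and the Floer package applies.
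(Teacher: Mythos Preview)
The paper does not give its own proof of this theorem: it is stated with the preface ``Recall the following from \cite{wo:gdisk}'' and is simply cited from that reference. Your proposal is essentially a sketch of the argument in \cite{wo:gdisk} (gauged Floer theory on the linear space $X$, classification of gauged disks so that the disk potential equals the naive $W_{X,G}$, critical point $\Leftrightarrow$ weak unobstructedness, nonvanishing $HF$), and is correct in outline.

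One small caution: the step ``at leading order in $q$, the differential $m_1^\rho$ reduces to the classical de~Rham differential, hence $HF \cong H^*(T_\R;\Lambda)$'' is not quite enough as stated---a leading-order computation only starts a spectral sequence, and you would still need to rule out higher-order differentials. In the gauged setup of \cite{wo:gdisk} this is handled by the explicit classification of gauged disks (there are no disks contributing nontrivially to $m_1^\rho$ beyond the classical differential), but you should say that rather than appeal to a leading-order argument.
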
  

The proof in \cite{wo:gdisk} uses that non-displaceability of a
Lagrangian in $X \qu G$ is implied by the $G$-non-displaceability of
its pre-image in $X$, and this non-displaceability is governed by a
suitable $G$-equivariant version of Floer homology.

\subsection{Generic tmmp runnings} 

By the results of the previous sections, we may understand the
critical values of the potential in terms of the corresponding minimal
model program.  Let $Y$ be a smooth proper toric Deligne-Mumford stack
with polarized projective coarse moduli space, and $G$ a torus acting
on a vector space $X$ so that $X \qu G$, equipped with its residual
torus action, is isomorphic to $Y$.

\begin{notation} \label{tmmp}
\begin{enumerate} 
\item {\rm (Toric minimal model program)} The sequence of stacks
\[  X \qu_t G, \ t \in [0,\infty), \quad Y = X \qu G = X \qu_0 G \]  
obtained by varying the equivariant symplectic class $\omega$ in the
direction of $- c_1^G(X)$ will be called a {\em toric minimal model
  program} (tmmp) running for $Y$.  Our terminology differs from the
standard terminology in that we include the path $\omega_t$ of
symplectic classes in the definition of the running.
\item {\rm (Transition times)} The values $t_1,\ldots,t_n$ of $t$ for
  which $X \qu_t G$ is singular (that is, there exist points $x$ in
  the stable locus $X^{\ss}$ with infinite stabilizer subgroups $G_x$
  ) are the {\em transition times} for the tmmp running.
\item {\rm (Dimension jumps)} Let $t_{j,\pm} = t_j \pm \eps$ for
  $\eps$ sufficiently small so that $t_{j-1} + \eps < t_j - \eps, j =
  2,\ldots, n$.  Let
\[ d_j = \dim QH (X \qu_{t_{j,-}} G) - \dim QH(X \qu_{t_{j,+}} G) \]
denote the {\em dimension jump} at $t_j$.  
\item {\rm (Singular moment values)} For simplicity, we assume that
  $X \qu_{t_j} G$ has a connected {\em singular set} with infinite
  stabilizer subgroups $(X \qu_{t_j} G)^{\sing}$ mapping to singular
  moment value
  \[ \Phi( (X \qu_{t_j} G)^{\sing}) \subset \Delta_{ X \qu_{t_j} G }
  \subset \Delta_{ X \qu_0 G } , \quad j =1,\ldots, n.\]
\item {\rm (Fiber of the Fano fibration)} Suppose furthermore that for
  $t$ just before $t_n$, the quotient $X \qu_t G$ is a fibration over
  $Y'' = (X \qu_{t_n} G)''$, with Fano fiber $Y' = (X \qu_{t_n}
  G)'$.
\end{enumerate} 
\end{notation}  

\begin{remark}  \label{induced}
\begin{enumerate}
\item {\rm (Non-uniqueness of tmmp runnings)} Many presentations of
  $X \qu G$ as a git quotient will give the same tmmp runnings.
  However, toric orbifolds can have infinitely many tmmp runnings,
  corresponding to different realizations of $X \qu G$ as git
  quotients.  (Recall we take the family of symplectic class
  $\omega_t$ as part of the definition of the tmmp running.)  The
  ``fake facet equalities'' $ \lan \cdot , \mu_j \ran = \omega_{t,j}$
  (those with empty solution set in $\Delta_{X \qu G})$ can ``catch
  up'' to the ``true facets'' (those with non-empty solution set)
  under the deformation $\omega_t$ at arbitrary times.  For example,
  taking the minimal presentation of $\P(1,3,5)$ as a git quotient
  $\C^3 \qu \C^\times$ yields a trivial toric mmp, but introducing a
  presentation as a quotient $\C^4 \qu (\C^\times)^2$ yields a toric
  mmp with a flip to an ``orbifold Hirzebruch surface'', which is
  similar to the example discussed in \cite{ww:qu}.  See Figure
  \ref{p135mmp}.  Since in this case the time of the transition
  depends on the position of the extra spurious facet (shown as the
  right-most dotted line in Figure \ref{p135mmp}) this give an example
  with infinitely many tmmp runnings.  The computation
  Abreu-Borman-McDuff \cite[Proposition 4.1.4]{abreu:ext} show that in
  the manifold case there is a unique tmmp, since the fake facets
  never ``catch up''.
\item {\rm (Induced tmmp running for fibrations)}
  \label{induceditem}
Any presentation of $Y$ as a git quotient $X \qu G$ induces a
  presentation of the base $(X \qu_{t_n} G)''$ of the final fibration
  as a git quotient, corresponding to the inequalities that are not
  defining inequalities for the Fano fiber $(X \qu_{t_n} G)''$,
  that is, the inequalities which become strict equalities for the
  final polytope $\Delta_{X \qu_{t_n} G}$.  Hence, any presentation of
  $Y$ induces a tmmp running for the base of the final fibration.
\end{enumerate} \end{remark} 

\begin{figure}
\includegraphics[width=4in]{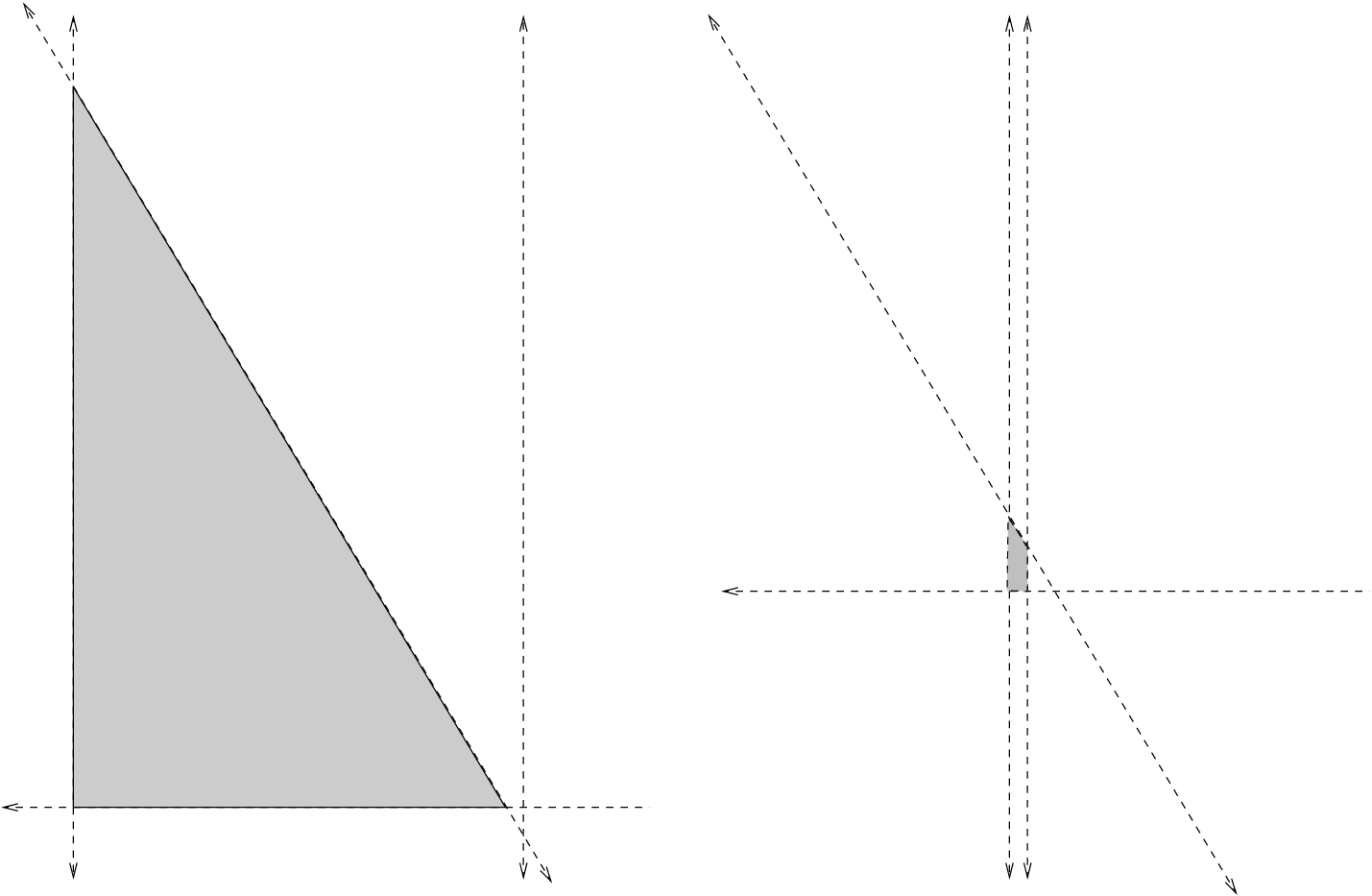}
\caption{Non-trivial toric mmp for $\P(1,3,5)$} 
\label{p135mmp}
\end{figure} 

\begin{notation} \label{espace} {\rm (Eigenspace decomposition for
    quantum multiplication by $c_1$)} Given a presentation
  $\PP = (X,G)$ of a toric stack $X \qu G$ as a git quotient let
  $\omega_\PP = \kappa_X^G(\omega)$ and
\[c_1(Y,\PP) := D_\omega \kappa_X^G c_1^G(X) \in T_{\omega_\PP}
QH(Y).\]
 Consider the decomposition of quantum cohomology into
eigenspace sums for quantum multiplication by $c_1(Y,\PP)$:
\[ T_{\omega_\PP} QH(Y ) \cong \bigoplus_{j=1}^n T_{\omega_\PP}
QH(Y)_{\PP,j} .\]
where each $QH(Y)_{\PP,j}$ is a sum of eigenspaces for eigenvalue
$\lambda$ with the same $q$-valuation $\on{val}_q(\lambda)$.
\end{notation} 

\begin{theorem} \label{jump} 
{\rm (Relationships between tmmps and
    quantum cohomology)} Let $Y$ be a compact toric orbifold and $\PP$
  be a generic toric mmp for $Y$ obtained from a quotient presentation
  $Y = X \qu G$ with transition times $t_j$, dimension jumps $d_j$ and
  singular moment values $\psi_j \in \Psi(\Crit(W_{X,G}))$,
  $j = 1,\ldots, n$.
\begin{enumerate} 
\item \label{aa} The transition times $t_j, j = 1,\ldots, n$ of the tmmp running are the
  $q$-valuations of the eigenvalues $\lambda_j$ of quantum
  multiplication by $c_1(Y,\PP)$.
\item  \label{bb} The dimension jumps $d_j, j = 1,\ldots, n$ of the tmmp running at time
  $t_j, j = 1,\ldots, n$ are the dimensions of the corresponding
  eigenspaces,
\[ \dim T_{\omega_\PP} QH(Y )_{\PP,j} = d_j ;\]
\item  \label{cc}  for each $j = 1,\ldots, n-1$, the inverse image $L_j$ of
  $\psi_j$ in $Y$ is Hamiltonian non-displaceable.  The number of
  local systems making the Lagrangian Floer homology of $L_j$
  non-vanishing, counted with multiplicity, is also equal to $d_j$;
\item \label{dd}  for $j = n$, the factor $T_{\omega_\PP} QH(Y)_{\PP,n}$ further
  splits
\[ T_{\omega_\PP} QH(Y)_{\PP,n} = \bigoplus_{j=1}^{n_1} T_{\omega_\PP}
  QH(Y)_{\PP,n,j} \]
  \[ \dim T_{\omega_\PP} QH(Y)_{\PP,n,j} = \dim(
  T_{\omega_{\PP'}} QH(Y')) d_{j,1} \]
according to a splitting induced from a tmmp running $\PP_1$ for the base $Y''
= (X \qu_{t_n} G)''$ and fiber $Y' = (X \qu_{t_n} G)'$ with dimension
jumps $d_{j,1}$ as in Remark \ref{induced} \eqref{induceditem}, and
for each singular value $\psi_{j,1}$ in such a tmmp running the inverse image
in $Y$ is Hamiltonian non-displaceable.
\end{enumerate} 
\end{theorem} 

\begin{proof} 
  Suppose that $\PP$ is a minimal model program corresponding to a
  presentation of $Y$ as a quotient of $X$ by $G$.  By the main result
  Theorem \ref{main} $QH(Y) \cong \hJac(W_{X,G})$, and the latter
  admits a decomposition into components corresponding to critical
  points with fixed value of the tropical moment map from Definition
  \ref{tropmom}.  Quantum multiplication by
  $c_1(Y,\PP) = D_\omega \kappa_{X,G} ( c_1^G(X) )$ is given by
  multiplication by $W_{X,G}$ itself, hence \eqref{aa} and \eqref{bb}.
  By Lemma \ref{cross}, each summand has dimension that of the
  dimension jump in the given tmmp running.  \eqref{cc} is Theorem
  \ref{wodisk}, with the multiplicity computed using Kouchnirenko's
  theorem.  \eqref{dd} is a consequence of Lemma \ref{fibration}
and Theorem \ref{wodisk}.
\end{proof} 

\subsection{Non-generic tmmp runnings} 

The results above are for generic initial symplectic class only.
Abreu has pointed out to us that there is still a connection between
non-displaceable Lagrangians and minimal model programs, even in the
case that minimal model program involves flips over ``singular'' toric
orbifolds, in the sense that the critical points of the
Landau-Ginzburg potential for the singular toric manifolds ``cause''
non-displaceable moment fibers in the original manifold or orbifold.
More precisely, we suppose that the generic stabilizer is trivial and
we are in the following situation:

\begin{notation}   \label{singnot}
\begin{enumerate} 
\item {\rm (Singular base of a tmmp transition)} Let $Y$ be a compact
  toric orbifold with symplectic class $\omega_Y$.  Consider a toric
  mmp for $Y$ with dimension jumps $d_j$ and singular moment values
  $\psi_j$, $j = 1,\ldots, n$, and the flip/contraction at time $t_j$
  has base a possibly singular toric variety $Z_j$ with polytope
  $\Delta_j$.  Let $\t_j^\dual$ denote the span of $\Delta_j$ and
  $T_j^\dual \subset T^\dual$ the torus with Lie algebra $\t_j^\dual$.
\item {\rm (Normal part of the potential)} Let
\[ W_{X,G,j}: T^\dual(\Lambda_0) \to \Lambda, \quad y \mapsto
\sum_{ \d \nu_j |_{\Delta_j} = 0}  q^{\omega_j} y^{\nu_j} \]
denote the part of the potential $W_{X,G}$ corresponding to the normal
vectors constant on $\Delta_j$.  Thus 
\[ W_{X,G} = W_{X,G,j} + W_{X,G,j}'  \] 
where $W_{X,G,j}'$ is the sum of terms corresponding to vector $\nu_j$
that are {\em non}-constant on $\Delta_j$.
\item {\rm (Normally non-degenerate)} We say $W_{X,G}$ is {\em
    normally non-degenerate} at $\Delta_j$ if each critical point
  $y \in \Crit(W_{X,G,j})$ is non-degenerate.  An example is shown in
  Figure \ref{abreu1}.
\end{enumerate} 
\end{notation}

\begin{theorem} \label{crit} {\rm (Non-displaceable Lagrangians via
    non-generic tmmps)} Suppose that $Y, \PP$ are as above so that
  $W_{X,G}$ is normally non-degenerate at $\Delta_j$, with leading
  order terms of order $q^\alpha$.  Then each critical point
  $(y'',y')$ of $W_{X,G,j} \times W_{X,G,j}' $ with
  $ \on{val}_q(y'') > \alpha$ is equivalent, modulo terms vanishing on
  $T_j$, to a critical point $y$ of $W_{X,G}$, and so $\Psi_j(y)$
  defines a non-displaceable moment fiber in $Y$.
\end{theorem}  

\begin{proof} The proof is an order-by-order correction argument,
  using Proposition \ref{stagewise}, see also the implicit function
  theorem of Fukaya et al \cite[Theorem 10.4]{fooo:toric1}.  Note that
  the tropical moment map
  $\Psi_j: \Crit(W_{X,G,j}) \to \on{int}(\Delta_j)$ maps to $\Delta$
  via the inclusion $\Delta_j \to \Delta$.  Any lift of a critical
  point $y \in \Crit(W_{X,G,j}) \subset T_j^\dual$ has the property
  that $\d W_{X,G}(y)$ descends to $\t^\dual/\t_j^\dual$, since the
  partial derivatives in the direction of $\t_j^\dual$ vanish.  As in
  \ref{stagewise}, \cite[Theorem 10.4]{fooo:toric1}, the point
  $y'' y' \in T$ may be corrected by an element of $T^\dual/T_j^\dual$
  to a critical point $y$ of the full potential $W_{X,G}$.  It follows
  from \cite[Theorems 3.19,Corollary 4.6]{fooo:toric2} that these
  fibers have non-trivial Floer cohomology, and so are
  non-displaceable.
\end{proof}

\begin{figure}[ht]
\includegraphics[width=2.5in]{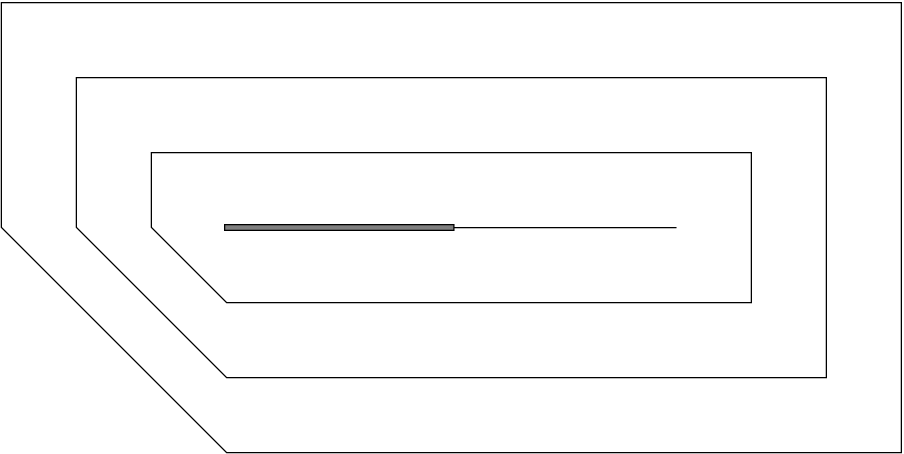}
\caption{A singular minimal model program} 
\label{abreu1}
\end{figure} 

\begin{example} \label{fam} The following example was pointed out to
  us by M. Abreu.  Suppose that $Y$ is the toric manifold whose
  polytope has vertices $(0,1),(0,2),(1,0), (4,0), (4,2)$, which is a
  blow-up of a product $\P^1 \times \P^1$; this was the first example,
  discovered in Fukaya-Oh-Ohta-Ono \cite{fooo:toric1}, of a toric
  manifold with a continuum of non-displaceable toric fibers.  The
  minimal model program for this symplectic class has no flips, and
  the final step is a ``singular fibration'' over a $\P^1$, with a
  single singular fiber consisting of a nodal $\P^1$.  We suppose that
  $X \qu G$ has a non-minimal presentation as a git quotient, so that
  there is a spurious facet of $\Delta_{X \qu G}$ with equation
  $\lambda_1 = -\eps$ for $\eps > 0$.  The potential
  $W_{X,G,1}' |_{\t_1^\dual}$ for this base, allowing bulk
  deformations at the divisors with normal vectors $(1,0)$ and
  $(1,1)$, is of the form
  $W_{X,G,1}' (y_1,y_2) = y_1 + y_1 y_2 + q^4/y_1 + q^{-\eps} y_1$
  while $W_{X,G,1}(y_2) = y_2 + q^2/y_2$.  Therefore
\[ W_{X,G,1}'( \cdot, -1) = q^4/y_1 + q^{-\eps} y_1 \] 
has critical points, as $\eps$ varies, given by $(y_1,-1)$ with
$\Psi(y_1,-1) = (-\infty,2)$.  Those with $\on{val}_q(y_1) > 1$ can be
corrected by Theorem \ref{crit} to honest critical points.  So the
toric moment fibers above the segment between $(1,1)$ and $(2,1)$ are
Hamiltonian non-displaceable. Non-displaceability of the end-points
$(1,1)$ and $(2,1)$ holds by continuity.  This reproduces the
non-displaceability result in this case from \cite{fooo:toric1}.  See
Figure \ref{abreu1}.
\end{example} 

\begin{remark} If $X \qu G$ is non-compact, a minimal model program
  for $X \qu G$ may have no transitions.  For example, suppose that
  $X \qu G$ is the total space of $\mO_{\P^1}(-d)$.  The moment
  polytope of $X \qu G$ is
  $\Delta_{X \qu G} = \{ \mu_2 \ge 0, \mu_1 + \mu_2 \ge -d/2, - \mu_1
  + \mu_2 \ge -d/2 \}.$
  For $d = 2$, the resulting running of the minimal model program has
  $\Delta_{X \qu_t G}$ a family of translations of $\Delta_{X \qu G}$.
  For $d > 2$, the minimal model program running $X \qu_t G$
  corresponds to translation of the polytope $\Delta_{X \qu_t G}$
  together with a dilation $\t_\R^\dual \to \t_\R^\dual$ by a constant
  greater than $1$.
\end{remark}

\def\cprime{$'$} \def\cprime{$'$} \def\cprime{$'$} \def\cprime{$'$}
\def\cprime{$'$} \def\cprime{$'$}
\def\polhk#1{\setbox0=\hbox{#1}{\ooalign{\hidewidth
      \lower1.5ex\hbox{`}\hidewidth\crcr\unhbox0}}} \def\cprime{$'$}
\def\cprime{$'$}

\end{document}